\numberwithin{equation}{section}
\theoremstyle{plain}
\newtheorem{theorem}{Theorem}[section]
\newtheorem{proposition}[theorem]{Proposition}
\newtheorem{lemma}[theorem]{Lemma}
\newtheorem{corollary}[theorem]{Corollary}
\newtheorem{conjecture}[theorem]{Conjecture}
\theoremstyle{definition}
\theoremstyle{remark}
\newtheorem{remark}[theorem]{Remark}
\newcommand\R{\mathbb{R}}
\newcommand\C{\mathbb{C}}
\def\stacksum#1#2{{\stackrel{{\scriptstyle #1}}
{{\scriptstyle #2}}}}
\newcommand{\sump}{\sideset{}{^\prime}\sum}
\begin{document}

\title{Primes in intervals of bounded length}

\author{Andrew Granville}
\address{D\'epartement de math\'ematiques et de statistiques,  Universit\'e de Montr\'eal, Montr\'eal QC H3C 3J7, Canada. }
\email{andrew@dms.umontreal.ca}
\thanks{To Yitang Zhang, for showing that one can, no matter what}
\thanks{Thanks to Sary Drappeau, Tristan Freiberg, John Friedlander, Adam Harper, Dimitris Koukoulopoulos, Emmanuel Kowalski, James Maynard, Pace Nielsen, Robert Lemke Oliver,  Terry Tao and the anonymous referee for their comments on earlier versions of this article.}

\subjclass{11P32}

\begin{abstract}  The infamous \textsl{Twin Prime conjecture} states that there are infinitely many   pairs of distinct primes which differ by $2$.  Until recently this conjecture had seemed to be far out of reach with current techniques. However, in April 2013, Yitang Zhang proved the existence of a finite bound $B$ such that there are infinitely many pairs of distinct primes which differ by no more than $B$.  This is a massive breakthrough, making the twin prime conjecture look highly plausible, and the techniques developed help us to better understand other delicate questions about prime numbers that had previously seemed intractable.

Zhang even showed that one can take   $B =  70000000$.  Moreover, a co-operative team, \emph{polymath8}, collaborating only on-line, had been able to lower the value of $B$ to ${4680}$. They had not only been more careful in several difficult arguments in Zhang's original paper, they had also developed Zhang's techniques to be both more powerful and to allow a much simpler proof (and forms the basis for the proof presented herein).

In November 2013, inspired by Zhang's extraordinary breakthrough, James Maynard dramatically slashed this bound to $600$, by a substantially easier method. Both Maynard, and Terry Tao who had independently developed the same idea, were able to extend their proofs to show that for any given integer $m\geq 1$ there exists a bound $B_m$ such that there are infinitely many intervals of length $B_m$ containing at least $m$ distinct primes. We will also prove this much stronger result herein, even showing that one can take $B_m=e^{8m+5}$.

If Zhang's method is combined with the Maynard-Tao set up then it appears that the bound can be further reduced to $246$.
If all of these techniques could be pushed to their limit then we would obtain  $B$($=B_2$)$=12$ (or arguably to $6$), so new ideas are still needed to have a feasible plan for proving the twin prime conjecture.

The article will be split into two parts. The first half  will introduce the work of Zhang, Polymath8, Maynard and Tao, and explain their arguments that allow them to prove their spectacular results. The second half of this article develops a proof of Zhang's main novel contribution, an estimate for primes in relatively short arithmetic progressions.
\end{abstract}

\maketitle

\setcounter{tocdepth}{1}

\newpage
\part{Primes in short intervals}

\section{Introduction}\label{sec:intro}

\subsection{Intriguing questions about primes}

Early on in our mathematical education we get used to the two basic rules of arithmetic, addition and multiplication. Then prime numbers are defined, not in terms of what they are, but rather  in terms of what they are not (i.e. that they \emph{cannot} be factored into two smaller integers)). This makes them difficult to find, and to work with.

  Prime numbers can be seen to occur rather frequently:
\[
2, 3, 5, 7, 11, 13, 17, 19, 23, 29, 31, 37, 41, 43, 47, 53, 59, 61,\ldots
\]
but it took a rather clever construction of the ancient Greeks to establish that there really are infinitely many.  Looking closely, patterns begin to emerge; for example, primes often come in pairs:
\[
3 \ \text{and} \ 5;\ 5 \ \text{and} \  7;\  11 \ \text{and} \ 13; \ 17 \ \text{and} \  19;\  29\ \text{and} \ 31;\ 41 \ \text{and} \ 43;\ 59 \ \text{and} \  61,\ldots
\]
One might guess that there are infinitely many such prime pairs. But this is an open, elusive question, the \emph{twin prime conjecture}. Until recently there was little theoretical evidence for it. A lot of data suggesting that these pairs never quit; and the higher view that it fits like the central piece  of an enormous jigsaw of \emph{conjectures} of all sorts of prime patterns. If the twin prime conjecture were false then one would have to be sceptical of all these conjectures, and our intellectual world would be the poorer for it.

The twin prime conjecture is  intriguing to amateur and professional mathematicians alike. It asks for a very delicate additive property of a sequence defined by its multiplicative properties, which some might argue makes it an artificial question. Indeed, number theorists had struggled  to identify an approach to this question that captured its essence enough to allow us to make headway.   But recently an approach has been found that puts the question firmly within the framework of \textsl{sieve theory} which has allowed the proof of important steps towards the eventual resolution of the twin prime conjecture (and its generalizations).

In the first few sections we  take a leisurely stroll through the historical and mathematical background, so as to give the reader a sense of the great theorems that have been recently proved, from a perspective that will prepare the reader for the details of the proof.

\subsection{Other patterns}  Staring at the list of primes above we
 find four primes which have all the same digits, except the last one:
\[
11, 13, 17 \  \ \text{and} \ 19; \text{ which is repeated with } 101, 103, 107, 109; \text{ then } 191, 193, 197, 199
\]
and one can find many more such examples -- are there infinitely many?  More simply how about prime pairs with difference $4$:
\[
3 \ \text{and} \ 7;\ 7 \ \text{and} \  11;\  13 \ \text{and} \ 17;\ 19 \ \text{and} \  23;\  37\ \text{and} \ 41;\ 43 \ \text{and} \ 47;\ 67 \ \text{and} \  71,\ldots;
\]
or difference $10$:
\[
3 \ \text{and} \ 13;\ 7 \ \text{and} \  17;\  13 \ \text{and} \ 23;\ 19 \ \text{and} \  29;\  31\ \text{and} \ 41;\ 37 \ \text{and} \ 47;\ 43 \ \text{and} \  53,\ldots ?
\]
Are there infinitely many such pairs? Such questions were probably asked back to antiquity, but the first clear mention of twin primes in the literature  appears in a presentation by Alphonse de Polignac, a student at the \'Ecole Polytechnique in Paris, in 1849. In his honour we now call any integer $h$, for which there are infinitely many prime pairs $p, p+h$, a \emph{de Polignac number}.\footnote{De Polignac also required that   $p$ and $p+h$    be consecutive primes, though this requirement is not essential to our discussion here. De Polignac's article \cite{deP} is very much that of an amateur mathematician, developing a first understanding of the sieve of Eratosthenes. His other ``conjecture'' in the paper,  asking whether every odd number is the sum of a prime and power of two, is false for as small an example as $127$.}

Then there are the \emph{Sophie Germain pairs}, primes $p$ and $q:=2p+1$, which prove useful in several simple algebraic constructions:\footnote{The group of reduced residues mod $q$ is a cyclic group of order $q-1=2p$, and therefore isomorphic to $C_2\times C_p$ if $p>2$. Hence the order of each element in the group is either $1$ (that is, $1\pmod q$), $2$ (that is, $-1\pmod q$), $p$ (the squares mod $q$) or $2p=q-1$. Hence $g \pmod q$ generates the group of reduced residues if and only if $g$ is not a square mod $q$ and $g\not\equiv -1\pmod q$.}
\[
2 \ \text{and} \ 5;\ 3 \ \text{and} \  7;\  5 \ \text{and} \ 11;\ 11 \ \text{and} \  23;\  23\ \text{and} \ 47;\ 29 \ \text{and} \ 59;\ 41 \ \text{and} \  83; \ldots;
\]

Can one predict which prime patterns can occur and which do not? Let's start with differences between primes: One of any two consecutive integers must be even, and so can be prime only if it equals $2$. Hence  there is just the one pair, 2 and 3, of primes with difference $1$. One can make a similar argument for prime pairs with odd difference. Hence if $h$ is an integer for which there are infinitely many prime pairs of the form $p,\ q=p+h$ then $h$ must be even. We discussed examples for $h=2$, for $h=4$ and for $h=10$ above, and the reader can similarly construct lists of examples for $h=6$ and for $h=8$, and indeed for any other even $h$ that takes her or his fancy. This leads us to bet on the   \emph{generalized twin prime conjecture}, which states that for any even integer $2k$ there are infinitely many prime pairs $p,\ q=p+2k$.

What about prime triples? or quadruples? We saw two examples of prime quadruples of the form $10n+1,\ 10n+3,\ 10n+7,\ 10n+9$, and believe that there are infinitely many. What about other patterns?  Evidently any pattern that includes an odd difference cannot succeed. Are there any other obstructions?  The simplest pattern that avoids an odd difference is $n, n+2, n+4$. One finds the one example $3, \ 5,\ 7$ of such a prime triple, but no others. Further examination makes it clear why not: One of the three numbers is always divisible by 3.  This is analogous to one of $n, n+1$ being divisible by $2$; and, similarly, one of
$n, n+6, n+12, n+18, n+24$ is always divisible by $5$. The general obstruction can be described as follows:

For a given set of distinct integers $a_1<a_2<\ldots<a_k$ we say that prime $p$ is an \emph{obstruction} if $p$ divides at least one of $n+a_1,\ldots, n+a_k$, for every integer $n$. In other words, $p$ divides
\[
\mathcal P(n)=(n+a_1)(n+a_2)\ldots (n+a_k)
\]
for every integer $n$; which can be classified by the condition that the set $a_1, a_2,\ldots,a_k \pmod p$ includes all of the residue classes mod $p$.  If no prime is an obstruction then we say that $x+a_1,\ldots, x+a_k$ is an \emph{admissible} set of forms.\footnote{Notice that $a_1, a_2,\ldots,a_k \pmod p$ can occupy no more than $k$ residue classes mod $p$ and so, if $p>k$ then $p$ cannot be an obstruction. Hence, to check whether a given set $A$ of $k$ integers is admissible, one needs only find one residue class $b_p \pmod p$, for each prime $p\leq k$, which does not contain any element of $A$.}.

In 1904 Dickson made the optimistic conjecture that if there is no such obstruction to a set of linear forms being infinitely often prime, then they are infinitely often simultaneously prime. That is:
\smallskip

\textbf{Conjecture}:\ \emph{ If $x+a_1,\ldots, x+a_k$ is an admissible  set of forms  then there are infinitely many integers $n$ such that $n+a_1,\ldots, n+a_k$ are all prime numbers.}
\smallskip

In this case, we call $n+a_1,\ldots, n+a_k$ a \emph{$k$-tuple} of prime numbers. 
Dickson's \textsl{prime $k$-tuple conjecture} states that if a set
$b_1x+a_1,\ldots, b_kx+a_k$ of linear forms is admissible 
(that is, if the forms are all positive at infinitely many integers $x$ and, for each prime $p$ there exist an integer 
$n$ such that $p\nmid \mathcal P(n):=\prod_j (b_jn+a_j)$)   then there are infinitely many integers $n$ for which $b_1n+a_1,\ldots, b_kn+a_k$ are all primes.

To date, this has not been proven for any $k>1$ though, following Zhang's work, we begin to get close for $k=2$. Indeed, Zhang has proved a weak variant of this conjecture for $k=2$, as we shall see. Moreover Maynard \cite{maynard}, and Tao \cite{tao}, have gone on to prove a weak variant \emph{for any} $k\geq 2$.

The above conjecture can be extended to linear forms in more than one variable (for example the set of forms $m, m+n, m+4n$):

\textbf{The prime $k$-tuplets conjecture}:\ \emph{ If a set of $k$ linear forms in $n$ variables is admissible then there are infinitely many sets of $n$ integers such  that when we substitute these integers into the forms we get a $k$-tuple of prime numbers.}
\smallskip

There has been substantial recent progress on this conjecture.
The famous breakthrough was Green and Tao's theorem \cite{GT} for the $k$-tuple of linear forms in the two variables $a$ and $d$:\
\[
a,\ a+d,\ a+2d,\ldots, \ a+(k-1)d
\]
(in other words, there are infinitely many $k$-term arithmetic progressions of primes.)
Along with Ziegler, they went on to prove the prime $k$-tuplets conjecture for \emph{any} admissible set of linear forms, provided no two satisfy a linear equation over the integers, \cite{GTZ}. What a remarkable theorem! Unfortunately these exceptions include many of the questions we are most interested in; for example, $p,\ q=p+2$ satisfy the linear equation $q-p=2$; and  $p,\ q=2p+1$ satisfy the linear equation $q-2p=1$).

Finally,  we also believe that the conjecture holds if we consider any admissible set of $k$ irreducible polynomials with integer coefficients, with any number of variables. For example we believe that $n^2+1$ is infinitely often prime, and that there are infinitely many prime triples $m,\ n,\  m^2-2n^2$.

\subsection{The new results; primes in bounded intervals}

In this section we state Zhang's main theorem, as well as the improvement of Maynard and Tao,  and discuss a few of the more beguiling consequences:
\smallskip

\textbf{Zhang's main theorem}:\ \emph{There exists an integer $k$ such that if $x+a_1,\ldots, x+a_k$ is an admissible  set of forms  then there are infinitely many integers $n$ for which} at least two of \emph{$n+a_1,\ldots, n+a_k$ are prime numbers.}
\smallskip

Note that the result states that only two of the $n+a_i$ are prime, not all (as would be required in the prime $k$-tuplets conjecture). Zhang proved this result for a fairly large value of $k$, that is $k=3500000$, which has been reduced to $k=105$ by Maynard, and now to $k=50$ in \cite{polymath8b}.  Of course if one could take $k=2$ then we would have the twin prime conjecture,\footnote{And the generalized twin prime conjecture, and that there are infinitely many Sophie Germain pairs (if one could use non-monic polynomials), and $\ldots$} but the most optimistic plan at the moment, along the lines of Zhang's proof, would yield $k=3$ (see section \ref{genseqs}).

To deduce that there are bounded gaps between primes from Zhang's Theorem we need only show the existence of an admissible set with $k$ elements. This is not difficult, simply by letting the $a_i$ be the first $k$ primes $>k$.\footnote{This is admissible since none of the $a_i$ is $0 \pmod p$ for any $p\leq k$, and the $p>k$ were handled in the previous footnote.} Hence we have proved:

\begin{corollary}   [Bounded gaps between primes]  There exists a  bound $B$ such that there are infinitely many integers pairs of prime numbers $p<q<p+B$.
\end{corollary}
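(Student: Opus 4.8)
The plan is to push all the difficulty into Zhang's main theorem (which we assume) and then finish with a short, entirely elementary argument: it suffices to exhibit one admissible set of forms with $k$ elements, where $k$ is the integer produced by that theorem.

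First I would invoke Zhang's main theorem to fix an integer $k$ with the stated property. Next I construct the admissible $k$-tuple explicitly: let $a_1<a_2<\cdots<a_k$ be the first $k$ primes larger than $k$, and put $B=a_k-a_1+1$. To verify admissibility one checks that no prime $p$ is an obstruction. For $p\leq k$: each $a_i$ is a prime strictly greater than $k\geq p$, so $p\nmid a_i$, and hence the residue class $0$ modulo $p$ contains none of the $a_i$; thus $p$ is not an obstruction. For $p>k$: the residues of $a_1,\ldots,a_k$ modulo $p$ occupy at most $k<p$ classes, so some class modulo $p$ is missed. Therefore $x+a_1,\ldots,x+a_k$ is admissible.

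Now apply Zhang's theorem to this set: there are infinitely many integers $n$ for which at least two of $n+a_1,\ldots,n+a_k$ are prime. For any such $n$, pick indices $i<j$ with $p:=n+a_i$ and $q:=n+a_j$ both prime; then $p<q$ and $q-p=a_j-a_i\leq a_k-a_1<B$, so $p<q<p+B$. Since the smaller prime satisfies $p\geq n+a_1\to\infty$ as $n$ ranges over the infinite set of valid integers, only finitely many such $n$ can yield a $p$ below any given bound, so the pairs $(p,q)$ produced are infinitely often distinct. This proves the corollary.

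I do not expect a genuine obstacle here: the only content is Zhang's theorem, and the deduction above is routine. The one point where one could say more is quantitative — by Bertrand's postulate (or the prime number theorem) one has $a_k=O(k\log k)$, so $B$ may be taken of that order — but this is irrelevant to the qualitative statement, and in practice, once $k$ is pinned down, one simply searches directly for the narrowest admissible $k$-tuple, which is how the explicit values of $B$ mentioned in the introduction arise.
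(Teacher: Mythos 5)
Your argument is correct and is exactly the paper's deduction: apply Zhang's main theorem to the admissible $k$-tuple formed from the first $k$ primes greater than $k$, checking admissibility for $p\leq k$ (none of the $a_i$ is $0\bmod p$) and for $p>k$ (fewer than $p$ residues occupied). The extra remark on why the pairs are infinitely often distinct is a small completeness point the paper leaves implicit.
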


Finding the narrowest admissible $k$-tuples is a challenging question.  The prime number theorem together with our construction above implies that $B\leq k(\log k+C)$ for some constant $C$, but it is interesting to get better bounds. For Maynard's $k=105$, Engelsma exhibited an admissible $105$-tuple of width $600$, and proved that there are no narrower ones. The narrowest $50$-tuple has width $246$ and one such tuple is: \smallskip


\centerline{$0, 4, 6, 16, 30, 34, 36, 46, 48, 58, 60, 64, 70, 78, 84, 88, 90, 94, 100, 106, $}

\centerline{$108, 114, 118, 126, 130, 136, 144, 148, 150, 156, 160, 168, 174, 178, 184,  $}

\centerline{$ 190, 196, 198, 204, 210, 214,  216, 220, 226, 228, 234, 238, 240, 244, 246.$\footnote{Sutherland's website {\tt  http://math.mit.edu/$\sim$primegaps/} lists narrowest $k$-tuples for all small $k$.}}

The Corollary further implies (for $B=246$)

\begin{corollary}   There is an integer $h, 0< h\leq B$ such that there are infinitely many pairs of primes $p, p+h$.
\end{corollary}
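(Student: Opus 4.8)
The plan is to deduce this immediately from the preceding Corollary (Bounded gaps between primes) by a pigeonhole argument, so essentially no new analytic input is needed. The preceding Corollary furnishes an infinite set $\mathcal{P}$ of primes $p$ such that each $p\in\mathcal{P}$ is the smaller member of a pair of primes $p<q<p+B$. For each such pair put $h(p):=q-p$; then $h(p)$ is an integer with $0<h(p)<B$, so $h(p)$ takes values in the finite set $\{1,2,\ldots,B-1\}$ (in particular in $\{1,\ldots,B\}$, as claimed).

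First I would observe that, since $\mathcal{P}$ is infinite and $h$ maps it into a finite set, some fiber must be infinite: there exists $h$ with $0<h\leq B$ such that $h(p)=h$ for infinitely many $p\in\mathcal{P}$. For each such $p$ the pair $p,\ p+h$ consists of two primes, and distinct $p$ give distinct pairs, so there are infinitely many pairs of primes differing by $h$. This is exactly the assertion of the Corollary.

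There is no real obstacle here; the only subtlety worth flagging is that the argument is purely existential — it produces \emph{some} de Polignac number $h\le B$ but gives no control over which $h$ works, and in particular says nothing about $h=2$. One could add the harmless remark that if one prefers the bound $h\le B$ with the interval closed, it follows a fortiori from $h<B$, and that taking $B=246$ via the explicit admissible $50$-tuple above makes the statement completely effective.
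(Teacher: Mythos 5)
Your pigeonhole deduction is exactly the argument the paper has in mind (the paper states the corollary without proof, as an immediate consequence of the bounded-gaps corollary, and then paraphrases it as "some positive integer $\leq B$ is a de Polignac number"). The proposal is correct and matches the intended reasoning.
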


That is, some positive integer $\leq B$ is a de Polignac number.
In fact one can go a little further using Zhang's main theorem, and deduce that if $A$ is \emph{any} admissible set of $k$ integers then there is an integer $h\in (A-A)^+:=\{ a-b:\ a>b \in A\}$ such that there are infinitely many pairs of primes $p, p+h$. One can find many beautiful consequences of this; for example, that a positive proportion of even integers are de Polignac numbers.

Zhang's theorem can be proved for $k$-tuplets $b_1x+a_1,\ldots,b_kx+a_k$ with minor (and obvious) modifications to the proof given herein.

Next we state the Theorem of Maynard and of Tao:

\textbf{The Maynard-Tao theorem}:\ \emph{For any given integer $m\geq 2$,  there exists an integer $k$ such that if $x+a_1,\ldots, x+a_k$ is an admissible  set of forms  then there are infinitely many integers $n$ for which} at least \emph{$m$ of $n+a_1,\ldots, n+a_k$ are prime numbers.}
\smallskip

This includes and extends Zhang's Theorem (which is the case $k=2$).  The proof even allows one make this explicit (we will obtain $k\leq e^{8m+4}$, and Maynard improves this to $k\leq c m^2 e^{4m}$ for some constant $c>0$).

\begin{corollary}  [Bounded intervals with $m$ primes]  For any given integer $m\geq 2$, there exists a  bound $B_m$ such that there are infinitely many intervals $[x,x+B_m]$ (with $x\in \mathbb Z$) which contain $m$ prime numbers.
\end{corollary}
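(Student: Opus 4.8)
The plan is to deduce this immediately from the Maynard--Tao theorem, exactly as the Corollary on bounded gaps between primes was deduced from Zhang's theorem. Fix $m\geq 2$ and let $k=k(m)$ be an integer for which the conclusion of the Maynard--Tao theorem holds: every admissible set of forms $x+a_1,\ldots,x+a_k$ has the property that $n+a_1,\ldots,n+a_k$ contains at least $m$ prime numbers for infinitely many integers $n$. It then suffices to exhibit a single admissible $k$-tuple of integers $a_1<\cdots<a_k$, since we may then take $B_m:=a_k-a_1$ and read off the conclusion.

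For the construction I would take $a_1<a_2<\cdots<a_k$ to be the first $k$ primes exceeding $k$, just as in the construction preceding the bounded-gaps Corollary. This set is admissible: for a prime $p\leq k$, none of the $a_i$ is divisible by $p$ (each $a_i$ is itself a prime larger than $k\geq p$), so $p$ occupies fewer than $p$ residue classes among the $a_i$ and is not an obstruction; and for $p>k$ the $k$ residues $a_1,\ldots,a_k\pmod p$ cannot exhaust all $p$ classes, so again $p$ is not an obstruction. Hence no prime is an obstruction, which is precisely admissibility.

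With this choice, the Maynard--Tao theorem produces infinitely many integers $n$ such that the interval $[\,n+a_1,\ n+a_1+B_m\,]$ contains the $\geq m$ primes occurring among $n+a_1,\ldots,n+a_k$. Writing $x=n+a_1$, these are intervals $[x,x+B_m]$ with $x\in\mathbb Z$ of exactly the desired form; since $n$ ranges over an infinite set the values of $x$ are unbounded, so infinitely many of these intervals are genuinely distinct, which proves the Corollary.

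Finally, to make $B_m$ explicit one notes that the prime number theorem applied to the construction above gives $a_k-a_1\leq k(\log k+C)$ for an absolute constant $C$; combined with the bound $k\leq e^{8m+4}$ furnished by the proof of the Maynard--Tao theorem, this yields $B_m\leq e^{8m+5}$, as asserted in the abstract. I do not expect any real obstacle: the entire difficulty is concentrated in the Maynard--Tao theorem itself, which we are assuming; the only points requiring care are the verification of admissibility of the chosen tuple and the observation that one obtains infinitely many \emph{distinct} intervals rather than a single interval counted repeatedly.
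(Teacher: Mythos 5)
Your proof is correct and is essentially the paper's argument: apply the Maynard--Tao theorem to the admissible $k$-tuple formed by the $k$ smallest primes exceeding $k$, and take $B_m$ to be the diameter of that tuple. The only place you are slightly loose is the final step deducing $B_m\leq e^{8m+5}$: using $a_k-a_1\leq k(\log k+C)$ together with the crude bound $k\leq e^{8m+4}$ would give $B_m\ll m\,e^{8m+4}$, not $e^{8m+5}$; the paper instead takes $k$ to be the \emph{smallest} integer with $k\log k>e^{8m+4}$ (so $k\approx e^{8m+4}/(8m+4)$), shows via explicit Chebyshev-type bounds on $\pi$ that the $k$ chosen primes lie in $[1,2k\log k]$, and then checks $2k\log k< e^{8m+5}$. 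For the qualitative statement of the Corollary (mere existence of $B_m$), which is all it asserts, your argument is complete as written.
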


We will prove that one can take $B_m=e^{8m+5}$ (which Maynard improves  to $B_m=cm^3e^{4m}$, and the polymath team \cite{polymath8b} to $B_m=cme^{(4-\frac{28}{157})m}$, for some constant $c>0$).

A \emph{Dickson $k$-tuple} is a set of integers $a_1<\ldots <a_k$ such that there are infinitely many integers for which $n+a_1, n+a_2,\ldots, n+a_k$ are each prime.

\begin{corollary}  A positive proportion of $m$-tuples of integers are Dickson $m$-tuples.
\end{corollary}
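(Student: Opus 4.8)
The plan is to deduce everything from the Maynard--Tao theorem by an essentially combinatorial argument (the case $m=1$ being trivial, as there are infinitely many primes). Fix $m\ge 2$ and let $k=k(m)$ be the integer that theorem provides. The first step is to notice that \emph{every} admissible $k$-set contains a Dickson $m$-tuple: if $\mathcal H=\{h_1<\cdots<h_k\}$ is admissible then, among the infinitely many $n$ for which at least $m$ of $n+h_1,\dots,n+h_k$ are prime, attach to each such $n$ an $m$-element index set $S_n\subseteq\{1,\dots,k\}$ with $n+h_i$ prime for all $i\in S_n$. There are only $\binom km$ possibilities for $S_n$, so some fixed $S^{\ast}$ occurs for infinitely many $n$, and then $\{h_i:i\in S^{\ast}\}$ is a Dickson $m$-tuple.

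The second step is to manufacture a large family of admissible $k$-sets, so that the $m$-tuples produced by the first step are numerous. Put $W=\prod_{p\le k}p$. For \emph{any} integers $1\le j_1<\cdots<j_k$ the set $\mathcal H(\mathbf j)=\{\,1+Wj_1,\dots,1+Wj_k\,\}$ is admissible: for primes $p>k$ this is automatic because $|\mathcal H(\mathbf j)|=k<p$, and for primes $p\le k$ every element is $\equiv 1\pmod p$, so the class $0\bmod p$ is omitted. Hence by the first step each $\mathbf j$ yields an $m$-subset $\mathcal I(\mathbf j)\subseteq\{j_1,\dots,j_k\}$ for which $\{\,1+Wi:i\in\mathcal I(\mathbf j)\,\}$ is a Dickson $m$-tuple.

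Finally I would count. Given a large $N$, restrict to those $\mathbf j$ with $j_k\le (N-1)/W$, i.e.\ to $k$-subsets of $[1,(N-1)/W]$; there are $\gg_m N^k$ of them. The assignment $\mathbf j\mapsto\mathcal I(\mathbf j)$ takes values among the $m$-subsets of $[1,(N-1)/W]$, and any fixed target $\mathcal I_0$ has at most $\binom{\lfloor (N-1)/W\rfloor-m}{k-m}\ll_m N^{k-m}$ preimages, since a preimage must contain $\mathcal I_0$ and is otherwise free. Consequently the image has $\gg_m N^k/N^{k-m}=N^m$ elements, and the Dickson $m$-tuples $\{\,1+Wi:i\in\mathcal I_0\,\}\subseteq\{1,\dots,N\}$ obtained from distinct $\mathcal I_0$ are themselves distinct. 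As there are only $\binom Nm\le N^m$ $m$-element subsets of $\{1,\dots,N\}$ in all, a proportion bounded below by a positive constant depending only on $m$ are Dickson $m$-tuples, as claimed.

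As for the difficulty: all of the real input is the Maynard--Tao theorem, which I am permitted to assume, so strictly speaking there is no hard analytic step. The one thing that needs a small idea is passing from a single admissible $k$-set to a whole supply of them; the device of forcing admissibility by confining every coordinate to the residue class $1$ modulo $\prod_{p\le k}p$ does this cleanly, and the only point requiring any care is the double-counting bound, which is harmless here precisely because $k-m$ of the coordinates of $\mathbf j$ are irrelevant to $\mathcal I(\mathbf j)$.
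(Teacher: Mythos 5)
Your proof is correct and follows essentially the same strategy as the paper's: apply Maynard--Tao to get $k$ from $m$, use the pigeonhole principle to extract a single Dickson $m$-tuple from any admissible $k$-set, manufacture a large ($\gg_m N^k$) family of admissible $k$-sets built from a positive-density set of integers in $[1,N]$, and then double-count to bound from below the number of distinct $m$-tuples so produced. The only cosmetic difference is that you restrict to the single residue class $1\pmod{\prod_{p\le k}p}$ to force admissibility, whereas the paper uses all $n\le x$ coprime to $\prod_{p\le k}p$ (a larger set, density $\phi(R)/R$ versus your $1/W$), but both are positive-density and the counting goes through identically.
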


\begin{proof}  With the notation as in the Maynard-Tao theorem let $R=\prod_{p\leq k} p$, select $x$ to be a large integer multiple of $R$ and let $\mathcal N:=\{ n\leq x:\ (n,R)=1\}$ so that
$|\mathcal N| =\frac{\phi(R)}R x$. Any subset of $k$ elements of $\mathcal N$ is admissible, since it does not contain any integer $\equiv 0 \pmod p$ for each prime $p\leq k$.  There are $\binom{ |\mathcal N|}{k}$ such $k$-tuples. Each contains a Dickson $m$-tuple by the Maynard-Tao theorem.

Now suppose that are $T(x)$ Dickson $m$-tuples that are subsets of $\mathcal N$. Any such $m$-tuple is a subset of exactly $\binom{ |\mathcal N|-m}{k-m}$ of the $k$-subsets of $\mathcal N$, and hence
\[
T(x) \cdot \binom{ |\mathcal N|-m}{k-m} \geq \binom{ |\mathcal N|}{k},
\]
and therefore $T(x)\geq (|\mathcal N|/k)^m=(\frac{\phi(R)}R/k)^m \cdot x^m$ as desired. \end{proof}

This proof yields that, as a proportion of the $m$-tuples in $\mathcal N$,
\[
T(x) \big/ \binom{ |\mathcal N|}{m} \geq 1 \big/ \binom{k}{m}  .
\]
The $m=2$ case implies that at least $\frac 1{5460}$th of the even integers are de Polignac numbers. (This is improved to at least $\frac 1{181}$ in \cite{GKKO}, which also discusses limitations on what can be deduced from a result like Zhang's Theorem.)

Zhang's Theorem and the Maynard-Tao theorem each hold for any admissible $k$-tuple of linear forms (not just those of the form $x+a$). With this we can prove several other amusing consequences:

$\bullet$\ The last Corollary holds if we insist that the primes in the Dickson $k$-tuples are consecutive primes.

$\bullet$ \ There exists a constant $H$ such that every interval $[x,x+H]$ contains a de Polignac number (see \cite{pintz-polignac}).

$\bullet$\  There are infinitely many $m$-tuples of consecutive primes such that each pair in the $m$-tuple differ from one another by just two digits when written in base $10$.

 $\bullet$\  For any $m\geq 2$ and coprime integers $a$ and $q$, there are infinitely many intervals $[x,x+qB_m]$ (with $x\in \mathbb Z$) which contain exactly $m$ prime numbers, each $\equiv a \pmod q$.\footnote{Thanks to Tristan Freiberg for pointing this out to me (see also \cite{freiberg}). However, I do not see how to modify the proof to show, given $r_1,\ldots, r_m$ coprime to $q$, that one has primes $p_{n+1},\ldots,p_{n+m}$ with $p_{n+j}\equiv r_j \pmod q$ for $j=1,\ldots,m$}

 $\bullet$ For any   integer $r\geq 2$ there are infinitely many $m$-tuples of distinct primes
 $q_1,\ldots,q_m$, such that the ratios $(q_i-1)/(q_j-1)$ are all (bounded) powers of $r$.

 $\bullet$\ Let $d_n=p_{n+1}-p_n$ where $p_n$ is the $n$th smallest prime.  Fix $m\geq 1$. There are infinitely many $n$ for which $d_n<d_{n+1}<\ldots < d_{n+m}$.  There are also infinitely many $n$ for which $d_n>d_{n+1}>\ldots >d_{n+m}$. (See \cite{consecutive}.) This was a favourite problem of Paul Erd\H os, though we do not see how to deduce such a result for other orderings of the $d_n$.\footnote{It was also shown in \cite{consecutive} that the $d_{n+j}$ can grow as fast as one likes. Moreover that one can insist that  $d_n|d_{n+1}|\ldots | d_{n+m}$ }

$\bullet$ One can also deduce \cite{lola} that there are infinitely many $n$ such that $s_r(p_n)<s_r(p_{n+1})<\ldots < s_r(p_{n+m})$, where $s_r(N)$ denotes the sum of the digits of $N$ when written in base $r$ (as well as
$s_r(p_n)> \ldots > s_r(p_{n+m})$).

$\bullet$ Moreover \cite{lola}  there are infinitely many $n$ such that $\phi(p_n-1)<\phi(p_{n+1}-1)<\ldots < \phi(p_{n+m}-1)$,  (as well as $\phi(p_n-1)> \ldots > \phi(p_{n+m}-1)$). An analogous result holds with $\phi$ replaced by $\sigma, \tau, \nu$ and many other arithmetic functions.

$\bullet$ If $\alpha$ is an algebraic, irrational number then \cite{chua} there are infinitely $n$ such that at least $m$ of
$[\alpha n], [\alpha(n+1)],\ldots , [\alpha(n+k)]$ are prime (where $[t]$ denotes the integer part of $t$). This result can be extended to any   irrational number $\alpha$ for which there exists $r$ such that $|p\alpha -q|\geq 1/p^r$ for all integers $p,q>0$.

In the eight months since Maynard's preprint,   many further interesting applications of the technique that have appeared, some of which we discuss in section \ref{FurtherApples}.



\subsection{Bounding the gaps between primes. A brief history.}
The young Gauss,  examining Chernac's table of primes up to one million, guessed that ``the density of primes at around $x$ is roughly $1/\log x$''. This was subsequently shown to be, as a consequence of the \emph{prime number theorem}.  Therefore we are guaranteed that there are infinitely many pairs of primes $p<q$ for which $q-p\leq (1+\epsilon)\log p$ for any fixed $\epsilon>0$, which is not quite as small a gap as we are hoping for! Nonetheless this raises the question: Fix $c>0$. Can we even prove that

\begin{center}
\emph{There are infinitely many pairs of primes $p<q$ with $q<p+c\log p$} ?
\end{center}

This follows for all $c>1$ by the prime number theorem, but it is not easy to prove such a result for any particular value of $c\leq 1$.
The first unconditional  result, bounding gaps between primes for some $c<1$, was proved by Erd\H os  in 1940 using the small sieve.
In 1966, Bombieri and Davenport \cite{bomdav} used the Bombieri-Vinogradov theorem   to prove this for any $c\geq \frac 12$.
In 1988 Maier \cite{maier} observed that one can easily modify this to obtain
any $c\geq \frac 12e^{-\gamma}$; and he further improved this, by combining the approaches of Erd\H os and of  Bombieri and Davenport, to obtain some bound a little smaller than $\frac 14$, in  a technical \emph{tour-de-force}.

The first big breakthrough occurred in 2005 when Goldston, Pintz and Yildirim \cite{gpy} were able to show that there are   infinitely many pairs of primes $p<q$ with $q<p+c\log p$, for \emph{any} given $c>0$. Indeed they extended their methods to show that, for any $\epsilon>0$, there are infinitely many pairs of primes $p<q$ for which
\[
q-p< (\log p)^{1/2+\epsilon}.
\]
It is their method which forms the basis of the discussion in this paper.

The earliest results on short gaps between primes were proved assuming the Generalized Riemann Hypothesis. Later unconditional results, starting with Bombieri and Davenport,  used the Bombieri-Vinogradov theorem in place of the Generalized Riemann Hypothesis. It is surprising that these tools appear in arguments about gaps between primes, since they are formulated to better understand the distribution of primes in arithmetic progressions.

Like Bombieri and Davenport, Goldston, Pintz and Yildirim showed that one can better understand small gaps between primes by obtaining strong estimates on primes in arithmetic progressions, as in the Bombieri-Vinogradov Theorem.  Even more, \textsl{assuming a strong, but widely believed, conjecture about the equi-distribution of primes in arithmetic progressions}, which extends the Bombieri-Vinogradov Theorem, one can prove Zhang's theorem with $k=5$.
Applying this result to the admissible $5$-tuple, $\{ 0,\ 2,\ 6,\ 8,\ 12\}$ implies that  there are infinitely many pairs of primes $p<q$ which differ by no more than $12$; that is, there exists a positive, even integer $2k\leq 12$ such that there are infinitely pairs of primes $p,\ p+2k$.

After Goldston, Pintz and Yildirim, most of the experts tried and failed to obtain enough of an improvement of the Bombieri-Vinogradov Theorem to deduce the existence of some finite bound $B$ such that there are infinitely many pairs of primes that differ by no more than $B$. To improve the Bombieri-Vinogradov Theorem is no mean feat and people have longed discussed ``barriers'' to obtaining such improvements.  In fact a technique to improve the Bombieri-Vinogradov Theorem had been developed by Fouvry \cite{fouvry}, and by Bombieri, Friedlander and Iwaniec \cite{bfi}, but this was neither powerful enough nor general enough to work in this circumstance.

Enter Yitang Zhang, an unlikely figure to go so much further than the experts, and to find exactly the right improvement and refinement of the Bombieri-Vinogradov Theorem to establish the existence of the elusive
bound $B$ such that there are infinitely many pairs of primes that differ by no more than $B$.  By all accounts, Zhang was a brilliant student in Beijing from 1978 to the mid-80s, finishing with a master's degree, and then working on the Jacobian conjecture for his Ph.D.~at Purdue, graduating in 1992. He did not proceed to a job in academia, working in odd jobs, such as in a sandwich shop, at a motel and as a delivery worker. Finally in 1999 he got a job at the University of New Hampshire as a lecturer. From time-to-time a lecturer devotes their energy to working on proving great results, but few have done so with such aplomb as Zhang. Not only did he prove a great result, but he did so by improving \emph{technically} on the experts, having important key ideas that they missed and developing a highly ingenious and elegant construction concerning exponential sums. Then, so as not to be rejected out of hand, he wrote his difficult paper up in such a clear manner that it could not be denied. Albert Einstein worked in a patent office, Yitang Zhang in a Subway sandwich shop; both found time, despite the unrelated calls on their time and energy, to think the deepest thoughts in science. Moreover Zhang's breakthrough came at the relatively advanced age of over 55. Truly \emph{extraordinary}.

After Zhang, a group of researchers decided to team up online to push the techniques, created by Zhang, to their limit. This was the eighth incarnation of the \emph{polymath} project, which is an experiment to see whether this sort of collaboration can help research develop beyond the traditional boundaries set by our academic culture.  The original bound of $70,000,000$ was quickly reduced, and seemingly every few weeks, different parts of Zhang's argument could be improved, so that the bound came down in to the thousands. Moreover the polymath8 researchers found variants on Zhang's argument about the distribution of primes in arithmetic progressions, that allow one to avoid some of the deeper ideas that Zhang used. These modifications   enabled your author to  give an accessible complete proof in this article.

After these  clarifications of Zhang's work, two researchers asked themselves whether the original ``set-up'' of Goldston, Pintz and Yildirim could be modified to get better results. James Maynard obtained his Ph.D. this summer at  Oxford, writing one of the finest theses in sieve theory of recent years. His thesis work equipped him perfectly to question whether the basic structure of the proof could be improved. Unbeknownst to Maynard, at much the same time (late October), one of the world's greatest living mathematicians, Terry Tao, asked himself the same question. Both found, to their surprise, that a relatively minor variant made an enormous difference, and that it was suddenly much easier to prove Zhang's Main Theorem and to go far beyond, because one can avoid having to prove any difficult new results about primes in arithmetic progressions. Moreover it is now not difficult to prove results about $m$ primes in a bounded interval, rather than just two.

\section{The distribution of primes, divisors and prime $k$-tuplets}

\subsection{The prime number theorem} As we mentioned in the previous section, Gauss observed, at the age of 16, that  ``the density of primes at around $x$ is roughly $1/\log x$'', which leads quite naturally to the conjecture that
\[
\# \{ \text{primes } p\leq x\} \approx \int_2^x \frac{dt}{\log t} \sim \frac x{\log x} \quad \text{as } x\to \infty.
\]
 (We use the symbol $A(x)\sim B(x)$ for two functions $A$ and $B$ of $x$, to mean that
 $A(x)/B(x)\to 1$ as $x\to \infty$.)  This was proved in 1896, the \emph{prime number theorem}, and the integral provides a considerably more precise approximation to the number of primes $\leq x$, than $x/\log x$. However, this integral is rather cumbersome to work with, and so it is natural to instead weight each prime with $\log p$; that is we work with
 \[
 \Theta(x):= \sum_{ \substack{p \text{ prime} \\ p\leq x}} \log p
 \]
and the prime number theorem is equivalent to
\begin{equation} \label{pnt2}
\Theta(x)\sim x\quad \text{as } x\to \infty.
\end{equation}

\subsection{The prime number theorem for arithmetic progressions, I} Any prime divisor of $(a,q)$ is an obstruction to the primality of values of the  polynomial $qx+a$, and these are the only such obstructions. The prime $k$-tuplets conjecture therefore implies that if $(a,q)=1$ then there are infinitely many primes of the form $qn+a$. This was first proved by Dirichlet in 1837. Once proved, one might ask for a more quantitative result. If we look at the primes in the arithmetic progressions $\pmod {10}$:
\begin{align*}
& 11,\ 31,\ 41,\ 61,\ 71,\ 101, \ 131, \ 151, \ 181, \ 191, \ 211, \ 241,\ldots
\\ & 3,\ 13,\ 23,\ 43,\ 53,\ 73,\ 83,\ 103,\ 113,\ 163,\ 173,\ 193,\  223,\ 233,\ldots
\\ & 7,\ 17,\ 37,\ 47,\ 67,\ 97,\ 107,\ 127,\ 137,\ 157,\ 167,\  197,\ 227,\ldots
\\ & 19,\ 29,\ 59,\ 79,\ 89,\ 109,\ 139,\  149,\  179,\ 199,\ 229,\ 239,\ldots
\end{align*}
then there seem to be roughly equal numbers in each, and this pattern persists as we look further out. Let $\phi(q)$ denote the number of $a \pmod q$ for which $(a,q)=1$ (which are the only arithmetic progressions in which there can be more than one prime), and so we expect that
 \[
 \Theta(x; q,a):= \sum_{ \substack{p \text{ prime} \\ p\leq x\\ p\equiv a \pmod q}} \log p \sim \frac x{\phi(q)} \quad \text{as } x\to \infty.
\]
This is the \emph{prime number theorem for arithmetic progressions} and was first proved by suitably modifying the proof of the prime number theorem.

The function $\phi(q)$ was studied by Euler, who showed that it is \emph{multiplicative}, that is
\[
\phi(q) = \prod_{p^e\| q} \phi(p^e)
\]
(where $p^e\|q$ means that $p^e$ is the highest power of prime $p$ dividing $q$) and, from this formula, can easily be determined for all $q$ since $\phi(p^e)=p^e-p^{e-1}$ for all $e\geq 1$.

\subsection{The prime number theorem and the M\"obius function}\label{pntMob} Multiplicative functions lie at the heart of much of the theory of the distribution of prime numbers. One, in particular, the M\"obius function, $\mu(n)$, plays a prominent role. It is defined as $\mu(p)=-1$ for every prime $p$, and $\mu(p^m)=0$ for every prime $p$ and exponent $m\geq 2$; the value at any given integer $n$ is then deduced from the values at the prime powers, by multiplicativity: If $n$ is squarefree then $\mu(n)$ equals $1$ or $-1$ according to whether $n$ has an even or odd number of prime factors, respectively.  One might guess that there are roughly equal  numbers of each, which one can phrase as the conjecture that
\[
\frac 1x \sum_{n\leq x} \mu(n) \to 0 \ \ \text{as} \ \ n\to \infty.
\]
This is a little more difficult to prove than it looks; indeed it is also equivalent to \eqref{pnt2}, the prime number theorem.  That equivalence is proved using the remarkable identity
\begin{equation} \label{VMidentity}
 \sum_{ab=n} \mu(a) \log b  \ = \ \begin{cases}
\log p &\text{ if } n=p^m, \text{ where } p \text{ is prime}, m\geq 1;\\
0  &\text{ otherwise. }
\end{cases}
\end{equation}
For more on this connection see the forthcoming book \cite{GS}.

\subsection{Recognizing prime powers and prime $k$-tuplets}  \label{Recogktuple} It is convenient to denote the right-hand side of \eqref{VMidentity} by $\Lambda(n)$ so that
\[
\Lambda(n) =  \sum_{d|n} \mu(d) \log n/d.
\]
In \eqref{VMidentity} we saw that $\Lambda(n)$  is supported  (only) on integers  $n$ that are prime powers,\footnote{By \emph{supported on} we mean ``can be non-zero only on''.} so this identity allows us to distinguish between composites and prime powers. This is useful because the functions in the summands are arithmetic functions that can be studied directly.  Such identities can be used to identify integers with no more than $k$ prime factors.  For example
\[
\Lambda_2(n) := \sum_{d|n} \mu(d) (\log n/d)^2 \ = \ \begin{cases}
(2m-1)(\log p)^2 &\text{ if } n=p^m;\\
2\log p\log q &\text{ if } n=p^aq^b,\ p\ne q;\\
0  &\text{ otherwise; }
\end{cases}
\]
that is, $\Lambda_2(n)$ is supported  (only) on integers  $n$ that have no more than two distinct prime factors.
In general (as seems to have first been discovered by Golomb \cite{golomb}),
\[
\Lambda_k(n) := \sum_{d|n} \mu(d) (\log n/d)^k
\]
is supported only when $n$ has no more than $k$ distinct prime factors (that is, $\Lambda_k(n)=0$  if  $\nu(n)>k$, where $\nu(m)$ denotes the number of distinct prime factors of $m$).  One can deduce (what at first sight seems to be a generalization) that, for any integer $R$,
\[
 \sum_{d|n} \mu(d) (\log R/d)^k
\]
is supported only when $n$ has no more than $k$ distinct prime factors.

Suppose that $0<a_1<\ldots<a_k$. We now show that if $n\geq a_k^{k-1}$ and $\Lambda_k(\mathcal P(n))\ne 0$ then
$\mathcal P(n)$ must  have \emph{exactly} $k$ distinct prime factors; moreover, if the $k$ prime factors of $\mathcal P(n)$ are  $p_1,\ldots,p_k$, then
\[
\Lambda_k(\mathcal P(n)) = k! (\log p_1)\ldots (\log p_k).
\]
\begin{proof} If $\mathcal P(n)$ has  $r\leq k-1$ distinct prime factors, call them $p_1,\ldots, p_r$. For each $p_i$ select some index $j=j(i)$ for which the power of $p_i$ dividing $n+a_j$ is maximized. Evidently there exists some $J,\ 1\leq J\leq k$ which is not a $j(i)$ for any $i$.  Therefore if $p_i^{e_i} \| n+a_J$ for each $i$ then
\[
p_i^{e_i} |  (n+a_J) - (n+a_{j(i)})  = (a_J-a_{j(i)}), \text{ which divides } \prod_{\substack{1\leq j\leq k \\ j\ne J}} (a_J-a_{j}) .
\]
Hence
\[
n+a_J = \prod_{i=1}^r \ p_i^{e_i}  \text{ divides } \ \prod_{\substack{1\leq j\leq k \\ j\ne J}} (a_J-a_{j}),
\]
and so $n<n+a_J\leq a_k^{k-1}\leq n$, by hypothesis,
which  is impossible.
\end{proof}

Selberg championed a surprising, yet key, notion of sieve theory; that the truncation
  \[  \sum_{\substack{d|n \\ d\leq R}} \mu(d) \log R/d \]
  is ``sensitive to primes'' (though not necessarily only supported on integers with few prime factors); and is considerably easier to work with in various analytic arguments.  In our case, we will work with the function
  \[ \sum_{\substack{d|\mathcal P(n) \\ d\leq R}} \mu(d) (\log R/d)^k, \]
  which is analogously ``sensitive'' to prime $k$-tuplets, and easier to work with than the full sum for $\Lambda_k(\mathcal P(n))$.

\subsection{A quantitative prime $k$-tuplets conjecture}
\label{Primektuples} We are going to develop a heuristic to guesstimate the number of pairs of twin primes $p, p+2$ up to $x$. We start with Gauss's statement that ``the density of primes at around $x$ is roughly $1/\log x$. Hence the probability that $p$ is prime is $1/\log x$, and the probability that $p+2$ is prime is $1/\log x$ so, assuming that these events are independent,  the probability that $p$ and $p+2$ are simultaneously prime is
\[ \frac 1{\log x} \cdot \frac 1{\log x} \ = \frac 1{(\log x)^2} ; \]
and so we might expect about $x/(\log x)^2$ pairs of twin primes $p,p+2\leq x$. However there is a problem with this reasoning, since we are implicitly assuming that the events ``$p$ is prime for an arbitrary integer $p\leq x$'', and ``$p+2$ is prime  for an arbitrary integer $p\leq x$'', can be considered to be independent. This is obviously false since, for example, if $p$ is even then $p+2$ must also be.\footnote{This reasoning can be seen to be false for a more dramatic reason: The analogous argument implies that there are $\sim x/(\log x)^2$ prime pairs $p,p+1\leq x$.}  So, we correct for the non-independence modulo small primes $q$, by the ratio  of the probability that both $p$ and $p+2$ are not divisible by $q$, to the probabiliity that $p$ and $p'$ are not divisible by $q$.

Now the probability that $q$ divides an arbitrary integer $p$ is $1/q$; and hence the probability that $p$ is not divisible by $q$ is $1-1/q$. Therefore the probability that both of two independently chosen integers are not divisible by $q$, is $(1-1/q)^2$.

The probability that $q$ does not divide either $p$ or $p+2$, equals the probability that $p\not\equiv 0$ or $-2 \pmod q$. If $q>2$ then $p$ can be in any one of $q-2$ residue classes mod $q$, which occurs, for a randomly chosen $p \pmod q$, with probability $1-2/q$.
 If $q=2$ then $p$ can be in any just one  residue class mod 2, which occurs with probability $1/2$. Hence the  ``correction factor'' for divisibility  by $2$ is
\[  \frac{ (1-\frac 12) } { (1-\frac 12)^2 } = 2,\]
and the ``correction factor'' for divisibility  by any prime $q>2$ is
\[  \frac{ (1-\frac 2q) } { (1-\frac 1q)^2 } .\]

Divisibility by different small primes is independent, as we vary over values of $n$, by the Chinese Remainder Theorem, and so we might expect to multiply together all of these correction factors, corresponding to each ``small'' prime $q$.  The question then becomes, what does ``small'' mean? In fact, it doesn't matter much because the product of the correction factors over larger primes is very close to 1, and hence we can simply extend the correction to be a product over all primes $q$. (More precisely, the infinite product over all $q$, converges.) Hence we define the \emph{ twin prime constant} to be
\[
C:= 2 \prod_{ \substack{q \ \text{prime} \\  q\geq 3 }}  \frac{ (1-\frac 2q) } { (1-\frac 1q)^2 }  \approx 1.3203236316,
\]
the total correction factor over all primes $q$. We then conjecture that the number of prime pairs $p,p+2\leq x$ is
\[
\sim C \frac x{(\log x)^2} .
\]
Computational evidence suggests that this is a pretty good guess. An analogous argument implies the conjecture that the number of prime pairs $p,p+2k\leq x$ is
\[
\sim C \prod_{\substack{p|k \\ p\geq 3}} \left(  \frac {p-1}{p-2} \right) \  \frac x{(\log x)^2} .
\]

This argument is easily modified to make an analogous prediction for any $k$-tuple:\ Given $a_1,\ldots, a_k$, let $\Omega(p)$ be the set of distinct residues given by $a_1,\ldots, a_k \pmod p$, and then let $\omega(p)=|\Omega(p)|$. None of the $n+a_i$ is divisible by $p$ if and only if $n$ is in any one of $p-\omega(p)$ residue classes mod $p$, and therefore the correction factor for prime $p$ is
\[  \frac{ (1-\frac {\omega(p)}p) } { (1-\frac 1p)^k } .\]
Hence we predict that  the number of prime $k$-tuplets $n+a_1,\ldots , n+a_k\leq x$ is,
\[
\sim  C(a) \frac x{(\log x)^k}  \ \text{ where } \ \  C(a):= \prod_p  \frac{ (1-\frac {\omega(p)}p) } { (1-\frac 1p)^k }.
\]
An analogous conjecture, via similar reasoning, can be made for the frequency of prime $k$-tuplets of polynomial values in several variables. What is remarkable is that computational evidence suggests that these conjectures do approach the truth, though this rests on the rather shaky theoretical framework given here. A more convincing theoretical framework based on the \emph{circle method} (so rather more difficult) was given by Hardy and Littlewood \cite{hardy}, which we will discuss in Appendix One.

 \section{Uniformity in arithmetic progressions}

\subsection{When primes are first equi-distributed in arithmetic progressions} By when are we guaranteed that the primes are more-or-less equi-distributed amongst the arithmetic progressions $a \pmod q$ with $(a,q)=1$?  That is, for what $x$ do we have
\begin{equation} \label{PNTaps}
\Theta(x;q,a) \sim \frac{x}{\phi(q)} \text{  for all   }  (a,q)=1 ?
\end{equation}
Here $x$ should be a function of $q$, and the asymptotic should hold as $q\to \infty$.

Calculations suggest  that, for any $\epsilon>0$, if $q$ is sufficiently large and $x\geq q^{1+\epsilon}$ then the primes  up to $x$ are equi-distributed amongst the arithmetic progressions $a \pmod q$ with $(a,q)=1$, that is \eqref{PNTaps} holds.  However no one has a plausible plan of how to prove such a result at the moment. The slightly weaker statement that  \eqref{PNTaps} holds for any $x\geq q^{2+\epsilon}$, can be shown to be true, assuming the Generalized Riemann Hypothesis. This gives us a clear plan for proving such a result, but one which has seen little progress in the last century!

The best unconditional results known involve much larger values of $x$, equidistribution only being proved once $x\geq e^{q^\epsilon}$. This is the \emph{Siegel-Walfisz Theorem}, and it can be stated in several (equivalent) ways  with an error term:\ For any $B>0$ we have
\begin{equation} \label{SW1}
\Theta(x;q,a) = \frac{x}{\phi(q)} +O\left( \frac x{(\log x)^B} \right) \text{   for all   }  (a,q)=1.
\end{equation}
Or: for any $A>0$ there exists $B>0$ such that if $q<(\log x)^A$ then
\begin{equation} \label{SW2}
\Theta(x;q,a) = \frac{x}{\phi(q)} \left\{ 1+O\left( \frac 1{(\log x)^B} \right)  \right\} \text{   for all   }  (a,q)=1.
\end{equation}
That $x$ needs to be so large compared to $q$ limits the applicability of this result.

The great breakthough of the second-half of the twentieth century came in appreciating that for many applications, it is not so important that we know that equidistribution holds for \emph{every} $a$ with $(a,q)=1$, and \emph{every} $q$ up to some $Q$, but rather that  it holds for \emph{most} such $q$ (with $Q=x^{1/2-\epsilon}$). It takes some juggling of variables to state the Bombieri-Vinogradov Theorem:\
We are interested, for each modulus $q$, in the size of the largest error term
\[
\max_{\substack{a\mod q \\ (a,q)=1}} \  \left| \Theta(x; q,a) - \frac x{\phi(q)} \right| ,
\]
or even
\[
\max_{y\leq x} \max_{\substack{a\mod q \\ (a,q)=1}} \  \left| \Theta(y; q,a) - \frac y{\phi(q)} \right| .
\]
The  bounds
$- \frac x{\phi(q)} \leq \Theta(x; q,a)- \frac x{\phi(q)}\leq (\frac xq+1) \log x$ are trivial, the upper bound obtained by bounding the possible contribution from each term of the arithmetic progression.
%
 We would like to improve on these bounds, perhaps by a power of $\log x$ (as in \eqref{SW1}), but we are unable to do so for all $q$. However, what we can prove is that   \emph{exceptional $q$  are few and far between},\footnote{\textsl{Exceptional} $q$ being those $q$ for which
 $|\Theta(x; q,a)- \frac x{\phi(q)}|$ is not small, for some $a$ coprime to $q$.} and the Bombieri-Vinogradov Theorem expresses this in a useful form.  The  ``trivial'' upper bound, obtained by adding up the above quantities over all $q\leq Q<x$, is
\[
\sum_{q\leq Q} \  \max_{\substack{a\mod q \\ (a,q)=1}} \  \left| \Theta(x; q,a) - \frac x{\phi(q)} \right|  \leq  \sum_{q\leq Q} \left( \frac {2x}{q} \log x + \frac x{\phi(q)} \right)\ll   x(\log x)^2.
\]
(Throughout, the symbol ``$\ll$'', as in ``$f(x) \ll  g(x)$'' means ``there exists a constant $c>0$ such that  $f(x) \leq cg(x)$.'')
The Bombieri-Vinogradov states that we can beat this trivial bound by an arbitrary power of $\log x$, provided $Q$ is a little smaller than $\sqrt{x}$:

\textbf{The Bombieri-Vinogradov Theorem}. \emph{For any given $A>0$ there exists a constant $B=B(A)$, such that
\[
\sum_{q\leq Q} \  \max_{\substack{a\mod q \\ (a,q)=1}} \  \left| \Theta(x; q,a) - \frac x{\phi(q)} \right| \ll_A \frac x{ (\log x)^A}
\]
where $Q=x^{1/2}/(\log x)^B$.}

In fact one can take $B=2A+5$; and one can also replace the summand here by the expression above with the maximum over $y$ (though we will not need to use this here).

\subsection{Breaking the $x^{1/2}$-barrier}
It is believed that estimates like that in the  Bombieri-Vinogradov Theorem hold with $Q$ significantly larger than $\sqrt{x}$; indeed Elliott and Halberstam conjectured  \cite{elliott} that one can take $Q=x^c$ for any constant $c<1$:

\textbf{The Elliott-Halberstam conjecture} \emph{For any given $A>0$ and $\eta,\ 0<\eta<\frac 12$, we have
\[
\sum_{q\leq Q} \  \max_{\substack{a\mod q \\ (a,q)=1}} \  \left| \Theta(x; q,a) - \frac x{\phi(q)} \right| \ll  \frac x{ (\log x)^A}
\]
where $Q=x^{1/2+\eta}$.}

However, it was shown in \cite{fg-1} that one \emph{cannot} go so far as to take $Q=x/(\log x)^B$.

This conjecture was the starting point for the work of Goldston, Pintz and Y{\i}ld{\i}r{\i}m \cite{gpy}, that was used by Zhang \cite{zhang} (which we give in detail in the next section). It can be applied to obtain the following result, which we will prove.

\begin{theorem}[Goldston-Pintz-Y{\i}ld{\i}r{\i}m]\label{gpy-thm}{\ \cite{gpy}}  Let $k \geq 2$, $l \geq 1$ be  integers, and  $0 < \eta < 1/2$, such that
\begin{equation}\label{thetal}
 1+2\eta > \left(1 + \frac{1}{2l+1}\right) \left(1 + \frac{2l+1}{k}\right).
\end{equation}
Assume that the Elliott-Halberstam conjecture holds with $Q=x^{1/2+\eta}$. If $x+a_1,\ldots, x+a_k$ is an admissible  set of forms  then there are infinitely many integers $n$ such that at least two of  $n+a_1,\ldots, n+a_k$ are prime numbers.
\end{theorem}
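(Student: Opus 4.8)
The plan is to run the Goldston-Pintz-Y{\i}ld{\i}r{\i}m weighted sieve, invoking the Elliott-Halberstam hypothesis at exactly one point. Fix a large $N$, put $x:=3N$ and $R:=x^{1/4+\eta/2}$, and write $\theta(m):=\log m$ if $m$ is prime and $\theta(m):=0$ otherwise. Introduce the sieve weights
\[
\lambda(n):=\frac1{(k+l)!}\sum_{\substack{d\mid\mathcal P(n)\\ d\le R}}\mu(d)\Bigl(\log\frac Rd\Bigr)^{k+l},
\]
and consider
\[
S:=\sum_{N<n\le 2N}\Bigl(\sum_{i=1}^k\theta(n+a_i)-\log x\Bigr)\lambda(n)^2 .
\]
First I would record that $S>0$ already suffices: if some $n\in(N,2N]$ makes the summand positive then $\lambda(n)\ne0$ and $\sum_i\theta(n+a_i)>\log x\ge\log(n+a_i)$ for every $i$; as each $\theta(n+a_i)$ is either $0$ or $\log(n+a_i)\le\log x$, at least two of the $n+a_i$ must then be prime. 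Since the resulting $n$ lies in $(N,2N]$, proving $S>0$ for all large $N$ (say with $N$ running through powers of $2$) produces infinitely many such $n$. So the problem reduces to the asymptotic evaluation of $S=S_2-(\log x)S_1$, where $S_1:=\sum_n\lambda(n)^2$ and $S_2:=\sum_{i=1}^k\sum_n\theta(n+a_i)\lambda(n)^2$.

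For $S_1$ I would expand the square, interchange summations, and use that $\#\{n\in(N,2N]:[d_1,d_2]\mid\mathcal P(n)\}=\tfrac{\omega([d_1,d_2])}{[d_1,d_2]}N+O(\omega([d_1,d_2]))$, where $\omega$ is multiplicative with $\omega(p)=\#\{a_i\bmod p\}\le\min(k,p)$ and $\omega(p)<p$ for every $p$ by admissibility. The accumulated remainder is $\ll R^2(\log R)^{O_k(1)}=o(N)$ because $R^2=x^{1/2+\eta}=o(x)$ (as $\eta<\tfrac12$). The surviving main term $N\sum_{d_1,d_2}\tfrac{\mu(d_1)\mu(d_2)\omega([d_1,d_2])}{[d_1,d_2]}(\log R/d_1)^{k+l}(\log R/d_2)^{k+l}$ is a Selberg-type diagonal sum, evaluated by the standard diagonalisation/Perron-integral computation (a weighted version of the truncated sum discussed in \S\ref{Recogktuple}), giving
\[
S_1=(1+o(1))\,\frac{\binom{2l}{l}}{(k+2l)!}\,\mathfrak S\,N(\log R)^{k+2l},\qquad \mathfrak S:=\prod_p\Bigl(1-\frac{\omega(p)}p\Bigr)\Bigl(1-\frac1p\Bigr)^{-k}>0,
\]
the finitely many primes $p$ with $\omega(p)<k$ contributing only bounded local factors.

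The crux is $S_2=\sum_{i=1}^kS_2^{(i)}$ with $S_2^{(i)}:=\sum_n\theta(n+a_i)\lambda(n)^2$, and this is the single place the hypothesis is used. Expanding as before, the inner sum becomes $\sum_{n\in(N,2N]:[d_1,d_2]\mid\mathcal P(n)}\theta(n+a_i)$; since $\theta(n+a_i)\ne0$ forces $n+a_i$ to be a prime of size $\asymp N\gg R^2\ge[d_1,d_2]$, only the $n$ with $[d_1,d_2]\mid\prod_{j\ne i}(n+a_j)$ and $(n+a_i,[d_1,d_2])=1$ survive, which for squarefree $q=[d_1,d_2]$ are $\omega_i(q):=\prod_{p\mid q}(\omega(p)-1)$ residue classes; on each, the weighted count is a difference $\Theta(2N+a_i;q,b)-\Theta(N+a_i;q,b)$ with $(b,q)=1$. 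Replacing each such difference by $N/\phi(q)$ turns $S_2^{(i)}$ into $N\sum_{d_1,d_2\le R}\tfrac{\mu(d_1)\mu(d_2)\omega_i([d_1,d_2])}{\phi([d_1,d_2])}(\log R/d_1)^{k+l}(\log R/d_2)^{k+l}$ plus an error
\[
\mathcal E\ \ll\ (\log R)^{2(k+l)}\sum_{q\le R^2}\tau_3(q)\max_{(b,q)=1}\Bigl(\Bigl|\Theta(2N+a_i;q,b)-\frac{2N+a_i}{\phi(q)}\Bigr|+\Bigl|\Theta(N+a_i;q,b)-\frac{N+a_i}{\phi(q)}\Bigr|\Bigr).
\]
Here is the key step: $R^2=x^{1/2+\eta}$, so this runs over moduli up to exactly the Elliott-Halberstam range $Q=x^{1/2+\eta}$; after disposing of the $\tau_3$-weight by Cauchy-Schwarz (using the trivial bound $\ll\tfrac Nq(\log N)^2$ on the maximum in one factor together with $\sum_{q\le Q}\tau_3(q)^2N/q\ll N(\log N)^{O(1)}$), the assumed Elliott-Halberstam conjecture gives $\mathcal E\ll_A N/(\log N)^A$ for every $A$, which is negligible. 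The main term, evaluated by the same diagonalisation with $\phi$ in the denominator and $\omega_i$ ($\omega_i(p)=\omega(p)-1$) replacing $\omega$, equals $(1+o(1))\tfrac{\binom{2l+2}{l+1}}{(k+2l+1)!}\mathfrak S\,N(\log R)^{k+2l+1}$; the gain of a full power of $\log R$ over the naive ``independence'' prediction is precisely what makes the method work.

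Finally I would assemble the pieces. Summing over $i$, $S_2=(1+o(1))\,k\tfrac{\binom{2l+2}{l+1}}{(k+2l+1)!}\mathfrak S\,N(\log R)^{k+2l+1}$, so with $\binom{2l+2}{l+1}/\binom{2l}{l}=\tfrac{2(2l+1)}{l+1}$,
\[
S=(1+o(1))\,\frac{\binom{2l}{l}}{(k+2l)!}\,\mathfrak S\,N(\log R)^{k+2l}\log x\,\Bigl(\frac{k}{k+2l+1}\cdot\frac{2(2l+1)}{l+1}\cdot\frac{\log R}{\log x}-1\Bigr).
\]
Since $\log R/\log x=(1+2\eta)/4$, the bracketed quantity is a fixed positive constant precisely when $(1+2\eta)>\bigl(1+\tfrac1{2l+1}\bigr)\bigl(1+\tfrac{2l+1}k\bigr)$, i.e.\ exactly under hypothesis \eqref{thetal}; hence $S>0$ for all large $N$, which proves the theorem. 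The main obstacle is the estimation of $\mathcal E$: that is the only genuinely analytic input (everything else is combinatorial bookkeeping plus a standard, if lengthy, contour-integral evaluation of the two Selberg main terms), and the inequality \eqref{thetal} is engineered so that those main terms sit in the winning ratio as soon as the level of distribution reaches $x^{1/2+\eta}$.
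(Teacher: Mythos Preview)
Your proposal is correct and follows essentially the same route as the paper: set up the GPY weighted sum, reduce to asymptotics for $S_1$ and $S_2$, invoke Elliott--Halberstam to control the error in $S_2$, and then check that the specific polynomial weight $(\log R/d)^{k+l}$ makes the bracket positive exactly under \eqref{thetal}. The paper presents the argument with a general weight function $F$ and Selberg's combinatorial diagonalisation, specialising to $F(t)=(1-t)^{\ell}/\ell!$ only at the end, whereas you work with the explicit GPY weight from the start and appeal to a Perron-type evaluation; these are two presentations of the same computation and produce the same constants.

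One small slip: in bounding $\mathcal E$ you write $\tau_3(q)$, but the number of $(d_1,d_2,b)$ triples with $[d_1,d_2]=q$ and $b$ ranging over the $\omega_i(q)$ admissible classes is $3^{\nu(q)}\omega_i(q)\le (3(k-1))^{\nu(q)}$, so the correct divisor weight is $\tau(q)^{O_k(1)}$ rather than $\tau_3(q)$. Your Cauchy--Schwarz step absorbs any fixed power of $\tau$, so this does not affect the argument.
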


 The conclusion here is exactly the statement of Zhang's main theorem.

 If the Elliott-Halberstam conjecture conjecture holds for some $\eta>0$ then select $l$  to be an integer so large that  $ \left(1 + \frac{1}{2l+1}\right) < \sqrt{1+2\eta}$.  Theorem \ref{gpy-thm} then implies Zhang's theorem for $k=(2l+1)^2$.

The Elliott-Halberstam conjecture seems to be too difficult to prove for now, but progress has been made when restricting to one particular residue class:\ Fix integer $a\ne 0$. We believe that for any fixed  $\eta,\ 0<\eta<\frac 12$, one has
\[
\sum_{\substack{q\leq Q \\ (q,a)=1}} \   \  \left| \Theta(x; q,a) - \frac x{\phi(q)} \right| \ll  \frac x{ (\log x)^A}
\]
where $Q=x^{1/2+\eta}$, which follows from the Elliott-Halberstam conjecture (but is weaker).

The key to progress has been to notice that if one can``factor'' the key terms here  then the extra flexibility allows one to make headway.
For example by factoring the modulus $q$ as, say, $dr$ where $d$ and $r$ are roughly some pre-specified sizes.  The simplest class of integers $q$ for which this can be done is the \emph{$y$-smooth integers}, those integers whose prime factors are all $\leq y$.  For example if we are given  a $y$-smooth integer $q$ and we want $q=dr$ with $d$ not much smaller than $D$, then we select $d$ to be the largest divisor of $q$ that is $\leq D$ and we see that $D/y<d\leq D$. This is precisely the class of moduli that Zhang considered.

The other ``factorization'' concerns the sum $\Theta(x;q,a)$. The terms of this sum can be written as a sum of products, as we saw in \eqref{VMidentity}; in fact we will decompose  this further,  partitioning the values of $a$ and $b$ (of \eqref{VMidentity})  into different ranges.

\begin{theorem}[Yitang Zhang's Theorem]\label{Zhangthm} There exist constants $\eta,\delta>0$ such that for any given integer $a$, we have
\begin{equation} \label{EHsmooth}
\sum_{\substack{q\leq Q \\ (q,a)=1 \\ q \text{ is } y-\text{smooth} \\ q \text{ squarefree} }} \   \  \left| \Theta(x; q,a) - \frac x{\phi(q)} \right| \ll_A \frac x{ (\log x)^A}
\end{equation}
where $Q=x^{1/2+\eta}$ and $y=x^\delta$.
\end{theorem}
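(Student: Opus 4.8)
The plan is to follow Zhang's argument in the form clarified by the \emph{polymath8} project, in three stages: a combinatorial decomposition of $\Theta(x;q,a)$ into bilinear and trilinear Dirichlet convolutions, a reduction of the resulting discrepancy sums (via the dispersion method) to incomplete exponential sums, and finally the estimation of those exponential sums by the Weil bound for Kloosterman sums — the step at which the $y$-smoothness of $q$ does its essential work. First I would apply a Vaughan-type (or Heath--Brown) identity to $\Lambda(n)$ for $n\sim x$, writing it as a bounded number of convolutions $\alpha_1*\cdots*\alpha_j$ with $j\leq 3$, in which some factors are ``smooth'' (the constant $1$ or $\log$, supported on short intervals) and the remaining factors are arbitrary sequences supported on dyadic ranges whose product is $x$. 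Substituting into \eqref{EHsmooth}, inserting weights $\lambda_q=\pm1$ to remove the absolute values, and discarding via \eqref{SW1} those ranges that are too unbalanced, the problem reduces to exhibiting cancellation in
\[
\sum_{\substack{q \leq Q,\ (q,a)=1 \\ q \text{ is } y\text{-smooth} \\ q \text{ squarefree}}} \lambda_q \Big( \sum_{mn \equiv a \pmod q} \alpha_m \beta_n \ -\ \frac{1}{\phi(q)}\sum_{(mn,q)=1}\alpha_m\beta_n \Big)
\]
(the ``Type I / Type II'' case, two general-or-smooth factors) together with an analogous ``Type III'' sum having three roughly equal smooth factors. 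Crucially, one uses the smoothness of $q$ to factor $q=dr$ with $d$ and $r$ in prescribed dyadic ranges; it is precisely this extra flexibility that makes $Q=x^{1/2+\eta}$ affordable for small $\eta$, rather than only $Q=x^{1/2-\epsilon}$.

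For the Type I / II sums I would run Linnik's dispersion method: after a Cauchy--Schwarz in the $q$ (equivalently $r$) variable one is left with $\sum_q\sum_{n_1,n_2}\beta_{n_1}\overline{\beta_{n_2}}(\text{congruence data})$, from which one separates the expected main term (cancelled against the $\frac{1}{\phi(q)}$ terms) and reduces the off-diagonal to exponential sums over the modulus $q=dr$. Opening these by completion of the $n$-sum (at a cost of a $\log$) and by the reciprocity relation $\tfrac{\bar r}{d}+\tfrac{\bar d}{r}\equiv\tfrac1{dr}\pmod1$ turns them into incomplete Kloosterman-type sums whose modulus, by the Chinese Remainder Theorem, \emph{splits} as a product of a sum modulo $d$ and a sum modulo $r$. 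Now one has two independent moduli of controlled size on which to apply $|\mathrm{Kl}(a,b;p)|\leq 2\sqrt p$ (Weil), and summing the resulting square-root cancellation back up — while checking that the diagonal $n_1=n_2$ contributes only an acceptable amount — yields the saving of an arbitrary power of $\log x$, provided $\eta$ and $\delta$ lie in an admissible range that can be made completely explicit.

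The main obstacle is the Type III contribution: three smooth factors do not succumb to the dispersion method alone, and in Zhang's treatment controlling it requires nontrivial estimates for short sums of products of hyper-Kloosterman sums — the Birch--Bombieri / Friedlander--Iwaniec bounds, which ultimately rest on Deligne's form of the Riemann Hypothesis over finite fields. To keep the exposition self-contained I would instead follow the \emph{polymath8} observation that, when one is content with a value of $\eta$ that is small relative to $\delta$ (still perfectly adequate), the Type III ranges can be reorganised so that they too are handled by the elementary Weil bound, at the cost only of a worse numerical constant — irrelevant here, since \emph{any} fixed $\eta,\delta>0$ suffice to feed Theorem~\ref{gpy-thm}. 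Beyond this, the work is largely bookkeeping: tracking the many dyadic parameters produced by the combinatorial identity, verifying that every error term is $\ll x/(\log x)^A$, and checking at each stage that the smoothness-driven factorisation $q=dr$ can be chosen compatibly with the sizes demanded by the Cauchy--Schwarz steps.
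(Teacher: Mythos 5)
Your plan has the right skeleton for the Type~I/II stages — Vaughan-type decomposition, dispersion with a Cauchy--Schwarz, completion, reciprocity and the CRT split of the exponential sum into independent moduli — and correctly flags that the smooth factorisation $q=dr$ is the key flexibility. But there is a genuine gap at the point where you say that Weil's bound for the resulting Kloosterman-type sums, summed back up, ``yields the saving of an arbitrary power of $\log x$''. That is exactly the step the paper shows fails. After the dispersion and completion of Sections~9--10, the Weil bound alone only makes the Type~II estimate work for $N > x^{2/5}$ (or, fixing one more variable before Cauchying, $N > x^{1/3+\epsilon}$ with $\epsilon$ bounded below by a positive combination of $\eta$ and $\delta$ — ``destined to just fail''). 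The paper's proof relies on an extra ingredient you omit: the Graham--Ringrose $q$-van der Corput method of Section~11, in which the smoothness of $q$ is exploited a \emph{second} time, this time \emph{inside} the exponential-sum bound by factoring $q=q_1q_2$, shifting the summation variable by multiples of $q_1$, and Weyl-differencing. This is what pushes the admissible range to $N>x^{3/10+\epsilon}$ and lets the argument close. Without this step the bound $162\eta + 90\delta < 1$ (or any positive $\eta,\delta$) cannot be reached by the dispersion-plus-Weil mechanism you describe.

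A secondary mismatch: you treat Type~III as an unavoidable obstacle to be neutralised by a ``polymath8 observation'' that Weil suffices there too. This is not what polymath8 did, nor what the paper does. Polymath8's Type~III treatment uses the deeper Birch--Bombieri/Deligne estimates; the route that avoids Deligne altogether — and the one this paper follows — is to choose Vaughan's identity with $U=V=x^{1/3}$, so that the only nontrivial convolutions $\alpha*\beta$ that survive have \emph{both} factors supported in $[x^{1/3},x^{2/3}]$, i.e. only Type~I/II arise. (The terms $(\mu*\Lambda)_{<x^{1/3}}*1_{\geq x^{2/3}}$ and $\mu_{<x^{1/3}}*L$ are Type~I and dispatched directly, not by dispersion.) But this clean elimination of Type~III is precisely why one must push the Type~II range all the way down to $N>x^{1/3}$, and hence why the Graham--Ringrose step is indispensable. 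Your plan would either have to accept Type~III sums and hence Deligne, or add the $q$-van der Corput layer you are currently missing.
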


Zhang \cite{zhang} proved his Theorem for $\eta/2=\delta=\frac{1}{1168}$, and his argument works provided $414 \eta+ 172 \delta < 1$. We will prove this result, by a somewhat simpler proof, provided $162 \eta+ 90 \delta < 1$, and the more sophisticated proof of \cite{polymath8} gives \eqref{EHsmooth} provided $43\eta+27\delta<1$.
 We expect  that this estimate holds for every $\eta\in [0,1/2)$ and every $\delta\in (0,1]$, but just proving it for any positive pair $\eta,\delta>0$ is an extraordinary breakthrough that has an enormous effect on number theory, since it is such an applicable result (and technique).  This is the technical result that truly lies at the heart of Zhang's result about bounded gaps between primes, and sketching a proof of this is the focus of the second half of this article.

   \section{Goldston-Pintz-Y{\i}ld{\i}r{\i}m's argument}\label{gpy-sec}

The combinatorial argument of Goldston-Pintz-Y{\i}ld{\i}r{\i}m \cite{gpy} lies at the heart of the proof that there are bounded gaps between primes. (Henceforth we will call it ``the GPY argument''. See \cite{sound} for a more complete discussion of their ideas.)

\subsection{The set up}
  Let ${\mathcal H} = (a_1< a_2< \ldots < a_k)$ be an admissible $k$-tuple, and take $x>a_k$.    Our goal is to select a weight   for which $\text{weight}(n)\geq 0$ for all $n$, such that
\begin{equation} \label{gpy1}
\sum_{x<n\leq 2x} \text{weight}(n) \left(\sum_{i=1}^{k} \theta(n+a_i) - \log 3x\right) > 0,
\end{equation}
where $\theta(m)=\log m$ if $m=p$ is prime, and $\theta(m)=0$ otherwise.
If we can do this then  there must exist an integer $n$ such that
$$ \text{weight}(n) \left(\sum_{i=1}^{k} \theta(n+a_i) - \log 3x\right) >0.$$
In that case $\text{weight}(n)\ne 0$ so that  $\text{weight}(n)>0$, and therefore
\[
\sum_{i=1}^{k} \theta(n+a_i) > \log 3x.
\]
However each $n+a_i\leq 2x+a_k<2x+x$ and so each $\theta(n+a_i)<\log 3x$.  This implies that at least two of the $\theta(n+a_i)$ are non-zero, that is, at least two of $n+a_1,\ldots,n+a_k$ are prime.

A simple idea, but the difficulty comes in selecting the function $\text{weight}(n)$ with these properties in such a way that we can evaluate the sums in \eqref{gpy1}.  Moreover in \cite{gpy} they also require that $\text{weight}(n)$ is sensitive to when each $n+a_i$ is  ``almost prime''. All of these properties can be acquired by using a construction championed by Selberg. In order that $\text{weight}(n)\geq 0$ one can simply take it to be a square. Hence we select
\[
\text{weight}(n) := \left( \sum_{\substack{d|\mathcal P(n) \\ d\leq R}} \lambda(d) \right)^2,
\]
where the sum is over the positive integers $d$ that divide $\mathcal P(n)$, and
\[
\lambda(d):=  \mu(d)  G \left(\frac{\log d}{\log R}\right) ,
\]
where $G(.)$ is a measurable, bounded function, supported only on $[0,1]$, and $\mu$ is the M\"obius function. Therefore $\lambda(d)$ is   supported only on  squarefree, positive integers, that are $\leq R$.
(This generalizes the discussion at the end of section \ref{Recogktuple}.)

We can select  $G(t)=(1-t)^m/m!$ to obtain the results of  this section but it will pay, for  our understanding of the Maynard-Tao construction, if we prove the GPY result for  more general $G(.)$.

\subsection{Evaluating the sums over $n$} Expanding the above sum gives
\begin{equation} \label{gpy2}
\sum_{\substack{d_1,d_2\leq R \\ D:=[d_1,d_2]}} \lambda(d_1)\lambda(d_2)
  \left(\sum_{i=1}^{k} \sum_{\substack{x<n\leq 2x\\ D|\mathcal P(n)}}  \theta(n+a_i) - \log 3x \sum_{\substack{x<n\leq 2x\\ D|\mathcal P(n)}} 1 \right) .
\end{equation}
Let $\Omega(D)$ be the set of congruence classes $m \pmod D$ for which $D|P(m)$; and let
$\Omega_i(D)$ be the set of congruence classes $m\in \Omega(D)$ with $(D,m+a_i)=1$. Hence the parentheses in the above line equals
\begin{equation} \label{gpy3}
\sum_{i=1}^{k} \sum_{m\in \Omega_i(D)}   \sum_{\substack{x<n\leq 2x\\ n\equiv m \pmod D}}  \theta(n+a_i) - \log 3x \sum_{m\in \Omega(D)}   \sum_{\substack{x<n\leq 2x \\ n\equiv m \pmod D}}  1,
\end{equation}
since $P(n)\equiv P(m) \pmod D$ whenever $n\equiv m \pmod D$.

Our first goal  is to evaluate the sums over $n$. The final sum is easy; there are
$x/D+O(1)$ integers in a given arithmetic progression with difference $D$, in an interval of length $x$.
Here $D:=[d_1,d_2]\leq d_1d_2\leq R^2$, and so the error term here is much smaller than the main term if $R^2$ is much smaller than $x$. We will select $R\leq x^{\frac 12 -o(1)}$ so that the sum of all of these error terms will be irrelevant to the subsequent calculations.

Counting the number of primes in a given arithmetic progression with difference $D$, in an interval of length $x$. is much more difficult.   We expect that  \eqref{PNTaps} holds, so that
each
\[
\Theta(2x;D,m+a_i)-\Theta(x;D,m+a_i)  \sim \frac x{\phi(D)} .
\]
The error terms here are larger and more care is needed. The sum of all of these error terms will be small enough to ignore, provided that the error terms are smaller than the main terms by an arbitrarily large power of $\log x$, at least on average.  This shows why the Bombieri-Vinogradov Theorem is so useful, since it implies the needed estimate provided $D<x^{1/2-o(1)}$ (which follows if $R<x^{1/4-o(1)}$).  Going any further is difficult, so that the $\frac 14$ is an important barrier. Goldston, Pintz and Y{\i}ld{\i}r{\i}m showed that if one can go just beyond $\frac 14$ then one can prove that there are bounded gaps between primes, but there did not seem to be any techniques available to them to do so.

For the next part of this discussion we'll ignore these accumulated error terms, and estimate the size of the sum of the main terms. First, though, we need to better understand the sets $\Omega(D)$  and  $\Omega_i(D)$. These sets may be constructed using the Chinese Remainder Theorem from the sets with $D$ prime. Therefore if $\omega(D):=|\Omega(D)|$ then $\omega(.)$ is a multiplicative function. Moreover each $|\Omega_i(p)|=\omega(p)-1$, which we denote by $\omega^*(p)$, and each $|\Omega_i(D)|=\omega^*(D)$, extending $\omega^*$ to be  a multiplicative function. Putting this altogether we obtain in \eqref{gpy3} a main term of
\[
  k \omega^*(D)  \frac x{\phi(D)} - (\log 3x)   \omega(D) \frac xD =
  x\left( k   \frac {\omega^*(D)}{\phi(D)} - (\log 3x)   \frac { \omega(D)}D \right) .
 \]
This is typically negative which explains why we cannot simply take the $\lambda(d)$ to all be positive in \eqref{gpy2}.  Substituting this main term for \eqref{gpy3} into each summand of \eqref{gpy2} we  obtain,
\begin{equation} \label{gpy4}
  x\left( k  \sum_{\substack{d_1,d_2 \leq R \\ D:=[d_1,d_2]}} \lambda(d_1)\lambda(d_2)  \frac {\omega^*(D)}{\phi(D)} - (\log 3x)  \sum_{\substack{d_1,d_2\leq R \\ D:=[d_1,d_2]}} \lambda(d_1)\lambda(d_2)  \frac { \omega(D)}D \right) .
\end{equation}

The two sums over $d_1$ and $d_2$ in \eqref{gpy4} are not easy to evaluate: The use of the M\"obius function leads to many terms being positive, and many negative, so that   there is  a lot of cancelation.  There are several techniques in analytic number theory that allow one to get accurate estimates for such sums,   two more analytic (\cite{gpy}, \cite{polymath8b}), the other more combinatorial  (\cite{sound}, \cite{ggpy}). We will discuss them all.

\subsection{Evaluating the sums using Perron's formula} Perron's formula allows one to study inequalities using complex analysis:
\[
\frac 1{2i\pi}  \int_{\text{Re}(s)=2} \frac{y^s}{s} \ ds =
\begin{cases}  1 &\text{if} \ y>1; \\
 1/2 &\text{if} \ y=1 ;\\
0 &\text{if} \ 0<y<1.
\end{cases}
\]
(Here the subscript ``$\text{Re}(s)=2$'' means that we integrate along the line $s:\ \text{Re}(s)=2$; that is  $s=2+it$, as $t$ runs from $-\infty$ to $+\infty$.)
So to determine whether $d<R$ we simply compute this integral with $y=R/d$. (The  special case, $d=R$, has a negligible effect on our sums, and can be avoided by selecting $R\not\in \mathbb Z$). Hence  the second sum in \eqref{gpy4} equals
\[
\sum_{\substack{d_1,d_2\geq 1 \\ D:=[d_1,d_2]}} \lambda(d_1)\lambda(d_2)  \frac { \omega(D)}D  \cdot
\frac 1{2i\pi}  \int_{\text{Re}(s_1)=2} \frac{(R/d_1)^{s_1}}{s_1} \ d{s_1} \cdot
\frac 1{2i\pi}  \int_{\text{Re}(s_2)=2} \frac{(R/d_2)^{s_2}}{s_2} \ d{s_2} .
\]
Re-organizing this we obtain
\begin{equation} \label{1stIntegral}
\frac 1{(2i\pi)^2}  \int_{\substack{\text{Re}(s_1)=2 \\ \text{Re}(s_2)=2}}
\ \left( \sum_{\substack{d_1,d_2\geq 1 \\ D:=[d_1,d_2]}} \frac{\lambda(d_1)\lambda(d_2)}{d_1^{s_1}d_2^{s_2} } \frac { \omega(D)}D \right) \
R^{s_1+s_2} \frac{d{s_2}}{s_2} \cdot \frac{d{s_1}}{s_1}
\end{equation}
We will compute the sum in the middle in the special case that $\lambda(d)=\mu(d)$, the more general case following from a variant of this argument. Hence we have
\begin{equation} \label{1stSum}
\sum_{ d_1,d_2\geq 1  } \frac{\mu(d_1)\mu(d_2)}{d_1^{s_1}d_2^{s_2} } \frac { \omega([d_1,d_2])}{[d_1,d_2]} .
\end{equation}
The summand is a multiplicative function, which means that we can evaluate it prime-by-prime. For any given prime $p$,  the summand is $0$ if $p^2$ divides $d_1$ or $d_2$ (since then $\mu(d_1)=0$ or $\mu(d_2)=0$). Therefore we have only four cases to consider: $p\nmid d_1,d_2; \ p|d_1, p\nmid  d_2; \ p\nmid  d_1, p|d_2; \ p|d_1, p|d_2$, so the $p$th factor is
\[
1 - \frac{1}{p^{s_1}} \cdot \frac{\omega(p)}p- \frac{1}{p^{s_2}} \cdot \frac{\omega(p)}p+ \frac{1}{p^{s_1+s_2}} \cdot \frac{\omega(p)}p .
\]
We have seen that $\omega(p)=k$ for all sufficiently large $p$  so, in that case, the above becomes
\begin{equation}\label{withk}
1 -  \frac{k}{p^{1+s_1}}   - \frac{k}{p^{1+s_2}} + \frac{k}{p^{1+s_1+s_2}}   .
\end{equation}
In the analytic approach, we compare the integrand to a (carefully selected) power of the Riemann-zeta function, which is defined as
\[
\zeta(s)=\sum_{n\geq 1} \frac 1 {n^s} \ = \ \prod_{p \ \text{prime}} \left( 1 - \frac 1 {p^s} \right)^{-1} \  \text{for Re}(s)>1
.
\]
The $p$th factor of $\zeta(s)$ is $\left( 1 -\frac{1}{p^{s}}\right)^{-1}$ so, as a first approximation, \eqref{withk} is roughly
\[
\left( 1 -\frac{1}{p^{1+s_1+s_2}}\right)^{-k} \left( 1 -\frac{1}{p^{1+s_1}}\right)^{k} \left( 1 -\frac{1}{p^{1+s_2}}\right)^{k} .
\]
Substituting this back into \eqref{1stIntegral} we obtain
\[
\frac 1{(2i\pi)^2}  \int\int_{\substack{\text{Re}(s_1)=2 \\ \text{Re}(s_2)=2}} \ \
\frac{  \zeta(1+s_1+s_2)^k } {  \zeta(1+s_1)^k  \zeta(1+s_2)^k}  G(s_1,s_2) \ \
R^{s_1+s_2} \frac{d{s_2}}{s_2} \cdot \frac{d{s_1}}{s_1} .
\]
where
\[
G(s_1,s_2):= \prod_{p \ \text{prime}}
\left( 1 -\frac{1}{p^{1+s_1+s_2}}\right)^{k} \left( 1 -\frac{1}{p^{1+s_1}}\right)^{-k} \left( 1 -\frac{1}{p^{1+s_2}}\right)^{-k} \left( 1 -  \frac{\omega(p)}{p^{1+s_1}}   - \frac{\omega(p)}{p^{1+s_2}} + \frac{\omega(p)}{p^{1+s_1+s_2}} \right) .
\]
To determine the value of this integral we move both contours in the integral slightly to the left of the lines Re$(s_1)=$Re$(s_2)=0$, and show that the main contribution comes, via Cauchy's Theorem, from the pole at $s_1=s_2=0$. This can be achieved using our understanding of the Riemann-zeta function, and by noting that
\[
G(0,0):= \prod_{p \ \text{prime}} \left( 1 -  \frac{\omega(p)}{p}  \right)
 \left( 1 -\frac{1}{p}\right)^{-k}  = C(a) \ne 0.
\]
Remarkably when one does the analogous calculation with the first sum in \eqref{gpy4}, one takes $k-1$ in place of $k$, and then
\[
G^*(0,0):= \prod_{p \ \text{prime}} \left( 1 -  \frac{\omega^*(p)}{p-1}  \right)
 \left( 1 -\frac{1}{p}\right)^{-(k-1)}  = C(a) ,
\]
also.  Since it is so unlikely that these two quite different products give the same constant by co-incidence, one can feel sure that the method is correct!

This was the technique used in  \cite{gpy} and, although the outline of the method is quite compelling, the details of the contour shifting can be complicated.

\subsection{Evaluating the sums using Fourier analysis}\label{sect:TaoApproach}  Both analytic approaches depend on the simple pole of the Riemann zeta function at $s=1$. The Fourier analytic approach (first used, to my knowledge, by Green and Tao, and in this context, on Tao's blog) avoids some of the more mysterious, geometric technicalities (which emerge when shifting contours in high dimensional space), since the focus is more on the pole itself.

To appreciate the method we   prove a fairly general result, starting with smooth functions
$F, H: [0,+\infty)\to \R$ that are supported on the finite interval $[0,\frac{\log R}{\log x}]$, and then letting
$\lambda(d_1)=\mu(d_1)F(\frac{\log d_1}{\log x})$ and $\lambda(d_2)=\mu(d_2)H(\frac{\log d_2}{\log x})$.
Note that $\lambda(d_1)$ is supported only when $d_1\leq R$, and similarly $\lambda(d_2)$.
Our goal is to evaluate the sum
\begin{equation}\label{EvalTao1}
 \sum_{\substack{d_1,d_2\geq 1 \\ D:=[d_1,d_2]}} \mu(d_1)\mu(d_2) \ F(\frac{\log d_1}{\log x}) H(\frac{\log d_2}{\log x})  \frac { \omega(D)}D .
\end{equation}

The function $e^t F(t)$ (and similarly $e^t H(t)$) is also a smooth, finitely supported, function.
Such a function  has  a Fourier expansion
\[
 e^v F(v) = \int_{t=-\infty}^\infty e^{-itv} f(t)\ dt \ \  \text{ and so}\ \ \ F(\frac{\log d_1}{\log x})= \int_{t=-\infty}^\infty \frac{f(t)}{d_1^{\frac{1+it}{\log x}}}\ dt,
\]
for some smooth, bounded function $f:\R \to \C$ that is rapidly decreasing,\footnote{That is, for any given $A>0$ there exists a constant $c_A$ such that $|f(\xi)|\leq c_A/|\xi|^{-A} $.} and therefore the tail of the integral (for instance when $|t|>\sqrt{\log x}$) does not contribute much. Substituting this and the analogous formula for $H(.)$ into \eqref{EvalTao1}, we obtain
\[
\int_{t,u=-\infty}^\infty f(t) h(u) \sum_{\substack{d_1,d_2\geq 1 \\ D:=[d_1,d_2]}} \frac{\mu(d_1)\mu(d_2)}{ d_1^{\frac{1+it}{\log x}} d_2^{\frac{1+iu}{\log x}}}  \frac { \omega(D)}D  \ dt du .
\]
We evaluated this same sum,  \eqref{1stSum} with $s_1=\frac{1+it}{\log x}$ and $s_2=\frac{1+iu}{\log x}$, in the previous approach, and so know that our integral equals
\[
\int_{t,u=-\infty}^\infty f(t) h(u)\frac{  \zeta(1+\frac{2+i(t+u)}{\log x})^k } {  \zeta(1+\frac{1+it}{\log x})^k  \zeta(1+\frac{1+iu}{\log x})^k}  G(\frac{1+it}{\log x},\frac{1+iu}{\log x})   \ dt du .
\]
One can show that the contribution with $|t|, |u|>\sqrt{\log x}$ does not contribute much since $f$ and $h$ decay so rapidly. When
$|t|, |u|\leq \sqrt{\log x}$ we are near the  pole of $\zeta(s)$, and can get very good approximations for the zeta-values by using the Laurent expansion $\zeta(s)=1/s+O(1)$. Moreover $G(\frac{1+it}{\log x},\frac{1+iu}{\log x}) =G(0,0)+o(1)$ using its Taylor expansion, and therefore our integral is very close to
\[
\frac{G(0,0)}{(\log x)^k} \ \int_{t,u=-\infty}^\infty f(t) h(u)\frac{(1+it)(1+iu)} {2+i(t+u)}   \ dt du .
\]

 By the definition of $F$ we have $F'(v) = -\int_{t=-\infty}^\infty (1+it)e^{-(1+it)v} f(t)\ dt$, and so
 \begin{align*}
 \int_{v=0}^\infty F'(v)H'(v)dv &= \int_{t,u=-\infty}^\infty f(t) h(u) (1+it)(1+iu)\left(  \int_{v=0}^\infty e^{-(1+it)v-(1+iu)v}  dv \right)dt du \\
 &= \int_{t,u=-\infty}^\infty f(t) h(u) \frac{(1+it)(1+iu)} {2+i(t+u)}dt du .
\end{align*}
Combining the last two displayed equations, and remembering that $G(0,0)=C(a)$ we deduce that
\eqref{EvalTao1} is asymptotically equal to
\[
\frac{C(a)}{(\log x)^k} \ \int_{v=0}^\infty F'(v)H'(v)dv.
\]
One can do the analogous calculation with the first sum in \eqref{gpy4}, taking $k-1$ in place of $k$, and obtaining the constant $G^*(0,0)=C(a)$.

\subsection{Evaluating the sums using Selberg's combinatorial approach, I} As discussed, the difficulty in evaluating the sums in \eqref{gpy4} is that there are many positive terms and many negative terms. In developing his upper bound sieve method, Selberg encountered a similar problem and dealt with it in a surprising way, using combinatorial identities to remove this issue. The method rests on a \emph{reciprocity law}:\ Suppose that $L(d)$ and $Y(r)$ are sequences of numbers, supported only on the squarefree integers. If
\[
Y(r):=\mu(r)    \sump_{m:\ r|m} L(m)   \ \text{for all} \ r\geq 1,
\]
then
\[
L(d) = \mu(d)    \sump_{n:\ d|n} Y(n)  \ \text{for all} \ d\geq 1
\]
From here on, $\sump$ denotes the restriction to squarefree integers that are $\leq R$.
\footnote{Selberg developed similar ideas in his construction of a small sieve, though he neither formulated a reciprocity law, nor applied his ideas to the question of small gaps between primes.}

Let $\phi_\omega$ be the multiplicative function (defined here, only on squarefree integers) for which $\phi_\omega(p)=p-\omega(p)$. We apply the above reciprocity law with
\[
L(d):=  \frac{\lambda(d)\omega(d)}{d}    \ \ \text{ and} \ \ \
Y(r):= \frac{y(r)\omega(r)}{\phi_\omega(r)} .
\]
Now since $d_1d_2=D(d_1,d_2)$ we have
\[
\lambda(d_1)\lambda(d_2)  \frac { \omega(D)}D = L(d_1)L(d_2) \ \frac{(d_2,d_2)}{\omega((d_2,d_2))}
\]
and therefore
\[
S_1:= \sump_{\substack{d_1,d_2 \\ D:=[d_1,d_2]}} \lambda(d_1)\lambda(d_2)  \frac { \omega(D)}D  = \sum_{r, s}  Y(r)Y(s)
\sump_{\substack{d_1,d_2 \\ d_1|r,\ d_2|s}}   \mu(d_1)   \mu(d_2)   \frac {(d_1,d_2)}{ \omega((d_1,d_2))} .
\]
The summand (of the inner sum)   is multiplicative and so we can work out its value, prime-by-prime. We see that if $p|r$ but $p\nmid s$ (or vice-versa) then the sum is $1-1=0$. Hence if the sum is non-zero then $r=s$ (as $r$ and $s$ are both squarefree). In that case, if $p|r$ then the sum is $1-1-1+p/\omega(p) = \phi_\omega(p)/\omega(p)$. Hence the sum becomes
\begin{equation}\label{squaresum}
S_1=\sum_{r }  Y(r)^2 \frac{\phi_\omega(r)}{\omega(r)} = \sum_{r } \frac{y(r)^2\omega(r)}{\phi_\omega(r)}.
\end{equation}
We will select
\[
y(r):= F\left( \frac{\log r}{\log R} \right)
\]
when $r$ is squarefree, where $F(t)$ is measurable and supported only on $[0,1]$; and $y(r)=0$ otherwise. Hence we now have a sum with all positive terms so we do not have to fret about complicated cancelations.

\subsection{Sums of multiplicative functions} An important theme in analytic number theory is to understand the behaviour of sums of multiplicative functions, some being easier than others. Multiplicative functions $f$ for which the $f(p)$ are fixed, or almost fixed, were the first class of non-trivial sums to be determined. Indeed from the  Selberg-Delange theorem,\footnote{This also follows from the relatively easy proof of Theorem 1.1 of  \cite{IK}.} one can deduce that
\begin{equation} \label{SD}
\sum_{n\leq x} \frac{g(n)}n \sim \kappa(g) \cdot \frac{ (\log x)^k } {k!},
\end{equation}
where
\[
\kappa(g):=  \prod_{p \ \text{prime}} \left( 1 +\frac{g(p)}{p} +  \frac{g(p^2)}{p^{2}} +\ldots \right)
 \left( 1 -\frac{1}{p}   \right)^k
\]
when $g(p)$ is typically ``sufficiently close'' to some given positive integer $k$ that the Euler product converges. Moreover, by partial summation, one deduces that
\begin{equation} \label{SD+}
\sum_{n\leq x} \frac{g(n)}n  F\left( \frac{\log n}{\log x} \right)  \sim \kappa(g) (\log x)^k \cdot \int_0^1
F(t) \frac{t^{k-1}}{(k-1)!} dt.
 \end{equation}
We apply this in the sum above, noting that here $\kappa(g)=1/C(a)$, to obtain
\[
C(a)S_1=C(a) \ \sum_{r } \frac{\omega(r)}{\phi_\omega(r)}F\left( \frac{\log r}{\log R} \right)^2 \sim
 (\log R)^k \cdot \int_0^1 F(t)^2 \frac{t^{k-1}}{(k-1)!} dt.
\]
A similar calculation reveals that
\[
C(a)\lambda(d) \sim \mu(d)  \cdot (1-v_d)^k \int_{v_d}^1 F(t) \frac{t^{k-1}}{(k-1)!} dt \cdot (\log R)^k,
\]
where $v_d:=\frac{\log d}{\log R}$.

\begin{remark} If, as in section \ref{sect:TaoApproach}, we replace $\lambda(d_1)\lambda(d_2)$ by
$\lambda_1(d_1)\lambda_2(d_2)$ then we obtain
\[
C(a)S_1\sim (\log R)^k \cdot \int_0^1 F_1(t)F_2(t) \frac{t^{k-1}}{(k-1)!} dt .
\]
\end{remark}

\subsection{Selberg's combinatorial approach, II}
 A completely analogous calculation, but now applying the reciprocity law with
 \[
L(d):=  \frac{\lambda(d)\omega^*(d)}{\phi(d)}    \ \ \text{ and} \ \ \
Y(r):= \frac{y^*(r)\omega^*(r)}{\phi_\omega(r)} ,
\]
yields that
\begin{equation} \label{solve2}
S_2:=  \sump_{\substack{d_1,d_2 \\ D:=[d_1,d_2]}} \lambda(d_1)\lambda(d_2)  \frac {\omega^*(D)}{\phi(D)} =
  \sum_{r}  \frac{y^*(r)^2\omega^*(r)}{\phi_\omega(r)} .
\end{equation}
We need to determine $y^*(r)$ in terms of the $y(r)$, which we achieve by applying the reciprocity law twice:
\begin{align*}
y^*(r) &=\mu(r) \frac{\phi_\omega(r)}{ \omega^*(r)}  \sum_{d:\ r|d} \frac{\omega^*(d)}{\phi(d)}
 \mu(d) \frac{d}{\omega(d)} \sum_{n:\ d|n} \frac{y(n)\omega(n)}{\phi_\omega(n)} \\
 &= \frac r{\phi(r)}   \sum_{n:\ r|n} \frac{y(n)}{\phi_\omega(n/r)}
 \sum_{d:\  d/r|n/r} \mu(d/r) \frac{\omega^*(d/r)d/r}{\phi(d/r)}
    \omega(n/d) \\
    & = r \sump_{n:\ r|n}  \frac{ y(n) } {\phi(n) } = \frac r{\phi(r)} \sump_{m:\ (m,r)=1}  \frac{ y(mr) } {\phi(m) } \\
        & \sim \int_{\frac{\log r}{\log R}}^1 F(t) dt \cdot \log R,
\end{align*}
where the last estimate was obtained by applying \eqref{SD+}   with $k=1$, and taking care with the Euler product.

We now can insert this into \eqref{solve2}, and  apply  \eqref{SD+}   with $k$ replaced by $k-1$, noting that $\kappa(g^*)=1/C(a)$, to obtain
\[
C(a)S_2=C(a) \sum_{r}  \frac{y^*(r)^2\omega^*(r)}{\phi_\omega(r)} \sim (\log R)^{k+1} \cdot \int_0^1
\left(\int_t^1 F(u)du \right)^2\frac{t^{k-2}}{(k-2)!} dt.
 \]

\begin{remark} If, as in section \ref{sect:TaoApproach}, we replace $\lambda(d_1)\lambda(d_2)$ by
$\lambda_1(d_1)\lambda_2(d_2)$ then we obtain
\[
C(a)S_2 \sim (\log R)^{k+1} \cdot \int_0^1
\left(\int_t^1 F_1(u)du \right)\left(\int_t^1 F_2(v)dv \right)\frac{t^{k-2}}{(k-2)!} dt.
\]
\end{remark}

\subsection{Finding a positive difference; the proof of Theorem \ref{gpy-thm}}  From these estimate, we deduce that   $C(a)$ times \eqref{gpy4} is asymptotic to $x(\log 3x)  (\log R)^k$ times
\begin{equation} \label{CollectedUp}
k\ \frac{\log R}{\log 3x} \cdot \int_0^1
\left(\int_t^1 F(u)du \right)^2\frac{t^{k-2}}{(k-2)!} dt -
  \int_0^1 F(t)^2 \frac{t^{k-1}}{(k-1)!} dt.
\end{equation}
Assume that the Elliott-Halberstam conjecture holds with exponent $\frac 12+\eta$, and let
$R=\sqrt{Q}$. This then equals
\[
 \int_0^1 F(t)^2 \frac{t^{k-1}}{(k-1)!} dt  \cdot \left(  \frac 12 \left( \frac 12+\eta \right) \ \rho_k(F) - 1 \right)
\]
where
\begin{equation} \label{Rhok}
\rho_k(F):=k  \int_0^1 \left(\int_t^1 F(u)du \right)^2\frac{t^{k-2}}{(k-2)!} dt \bigg/
  \int_0^1 F(t)^2 \frac{t^{k-1}}{(k-1)!} dt .
\end{equation}
Therefore if
\[
\frac 12 \left( \frac 12+\eta \right) \ \rho_k(F) > 1
\]
for some $F$ that satisfies the above hypotheses, then \eqref{CollectedUp} is $>0$, which implies that  \eqref{gpy4}  is $>0$, and so \eqref{gpy1} is also $>0$, as desired.

We now need to select a suitable function $F(t)$ to proceed. A good choice is $F(t)=\frac{(1-t)^\ell}{\ell!}$.
Using the beta integral identity
\[ \int_0^1   \frac{v^{k}}{k!} \frac{(1-v)^\ell}{\ell !} dv=\frac{1 }{(k+\ell+1)!} ,\]
we obtain
\[
 \int_0^1 F(t)^2 \frac{t^{k-1}}{(k-1)!} dt  = \int_0^1 \frac{(1-t)^{2\ell}}{\ell!^2} \frac{t^{k-1}}{(k-1)!} dt
 = \frac{1  }{(k+2\ell)!} \binom{2\ell}{\ell} ,
\]
and
\[
\int_0^1 \left(\int_t^1 F(u)du \right)^2\frac{t^{k-2}}{(k-2)!} dt
= \int_0^1 \left( \frac{(1-t)^{\ell+1}}{\ell+1}\right)^2 \frac{t^{k-2}}{(k-2)!} dt
 = \frac{ 1 }{(k+2\ell+1)! } \binom{2\ell+2}{\ell+1}.
 \]
Therefore  \eqref{Rhok} is $>0$ if \eqref{thetal} holds, and so we deduce Theorem \ref{gpy-thm}.
\bigskip

To summarize: \ If the Elliott-Halberstam conjecture holds with exponent $\frac 12+\eta$, and if $\ell$ is an integer such that $1+2\eta > \left(1 + \frac{1}{2\ell+1}\right)^2$ then for every admissible $k$-tuple, with $k=(2\ell+1)^2$, there are infinitely many $n$ for which the $k$-tuple, evaluated at $n$, contains (at least) two primes.

 \section{Zhang's modifications of GPY}
At the end of the previous section we saw that if the
Elliott-Halberstam conjecture holds with any exponent $> \frac 12$, then  for every admissible $k$-tuple (with $k$ sufficiently large),
there are infinitely many $n$ for which the $k$-tuple contains two primes. However the Elliott-Halberstam conjecture remains unproven.

In \eqref{EHsmooth} we stated Zhang's result, which breaks the $\sqrt{x}$-barrier (in such results), but at
the cost of restricting the moduli to being $y$-smooth, and restricting the arithmetic progressions $a\pmod q$ to having the same value of $a$ as we vary over $q$ .
Can the Goldston-Pintz-Y{\i}ld{\i}r{\i}m argument
be modified to handle these restrictions?

\subsection{Averaging over   arithmetic progressions} In the GPY argument we need estimates for the number of primes in the arithmetic progressions $m+a_i \pmod D$ where $m\in \Omega_i(D)$.  When using the Bombieri-Vinogradov Theorem, it does not matter that $m+a_i$ varies as we vary over $D$; but it does matter when employing Zhang's Theorem \ref{Zhangthm}.

Zhang realized that one can exploit the structure of the sets $O_i(D)=\Omega_i(D)+a_i$, since they are constructed from the sets $O_i(p)$, for each prime $p$ dividing $D$, using the Chinese Remainder Theorem, to get around this issue:

Let $\nu(D)$ denote the number of prime factors of (squarefree) $D$, so that $\tau(D)=2^{\nu(D)}$.
Any squarefree  $D$ can be written as $[d_1,d_2]$ for   $3^{\nu(D)}$ pairs $d_1,d_2$, which means that we need an appropriate upper bound  on
\[
\leq    \sump_{D\leq Q}  3^{\nu(D)}
\sum_{b\in O_i(D)}  \left| \Theta(X;D,b)- \frac X{\phi(D)}\right|
\]
where $Q=R^2$ and $X=x$ or $2x$, for each $i$.

Let $L$ be the lcm of all of the $D$ in our sum.   The set, $O_i(L)$, reduced mod $D$, gives $|O_i(L)|/|O_i(D)|$ copies of $O_i(D)$ and so
\[
\frac 1{|O_i(D)|}
\sum_{b\in O_i(D)}  \left| \Theta(X;D,b)- \frac X{\phi(D)}\right|
= \frac 1{|O_i(L)|}
\sum_{b\in O_i(L)}  \left| \Theta(X;D,b)- \frac X{\phi(D)}\right| .
\]
Now  $|O_i(D)|=\omega^*(D)\leq (k-1)^{\nu(D)}$,   and so
$3^{\nu(D)}|O_i(D)|\leq \tau(D)^A$ for all squarefree $D$, where $A$ is chosen so that $2^A=3(k-1)$.
The above is therefore
 \begin{align*}
&\leq    \sump_{D\leq Q}  \tau(D)^A \cdot \frac 1{|O_i(D)|}
\sum_{b\in O_i(D)}  \left| \Theta(X;D,b)- \frac X{\phi(D)}\right| \\
&=  \frac 1{|O_i(L)|} \sum_{b\in O_i(L)}   \sump_{D\leq Q}  \tau(D)^A \cdot  \left| \Theta(X;D,b)- \frac X{\phi(D)}\right|\\
&\leq \max_{a\in \mathbb Z} \sump_{\substack{D\leq Q \\ (D,a)=1}}  \tau(D)^A \cdot  \left| \Theta(X;D,a)- \frac X{\phi(D)}\right| .
\end{align*}
This can be bounded using Theorem \ref{Zhangthm}, via  a standard technical argument: By Cauchy's Theorem, the square of this is
\[
\leq \sum_{D\leq Q}  \frac{\tau(D)^{2A}}{D} \cdot  \sump_{D\leq Q}    D \left| \Theta(X;D,b)- \frac X{\phi(D)}\right|^2.
\]
The first sum is $\leq \prod_{p\leq Q} (1+\frac 1p)^B$ where $B=2^{2A}$, which is  $\leq (c\log Q)^{B}$ for some constant $c>0$, by Mertens' Theorem. For the second sum,  $D \ \Theta(X;D,b) \leq (X+D)\log X$, trivially, and
$\frac D{\phi(D)}\leq \log X$ (again by Mertens' Theorem), so the second sum is
\[ \leq 2 X\log X \sump_{D\leq Q}      \left| \Theta(X;D,b)- \frac X{\phi(D)}\right|.\]
The sum in this equation may be bounded
by Theorem \ref{Zhangthm}.

\subsection{Restricting the support to smooth integers}
Zhang simply took the same coefficients $y(r)$ as above, but now
restricted to $x^\delta$-smooth integers; and   called this restricted class of coefficients, $z(r)$.  Evidently
the sum in \eqref{squaresum} with $z(r)$ in place of $y(r)$, is bounded above by the sum in
\eqref{squaresum}.  The sum in \eqref{solve2}  with $z(r)$ in place of $y(r)$, is a little more tricky,
since we need a lower bound. Zhang proceeds by showing that if $L$ is sufficiently large and $\delta$
sufficiently small, then the two sums differ by only a negligible amount.\footnote{Unbeknownst to Zhang, Motohashi and Pintz \cite{mp} had already given an argument to accomplish the goals of this section, in the hope that someone might prove an estimate like \eqref{EHsmooth}!} In particular we will prove
Zhang's Theorem when \[ 162\eta+90\delta<1. \] Zhang's argument to restrict the support to smooth integers, as just discussed in this subsection,   holds when $\ell  =431,\ k=(2\ell+1)^2$ and
$\eta=2/(2\ell+1)$.

\section{Goldston-Pintz-Y{\i}ld{\i}r{\i}m in higher dimensional analysis}\label{GPYhigh}

In the GPY argument, we studied the divisors
$d$ of the product of the $k$-tuple values; that is
\[
d|\mathcal P(n) = (n+a_1)\ldots (n+a_k).
\]
with $d\leq R$.

Maynard and Tao (independently) realized that they could  instead study the $k$-tuples of divisors $d_1,d_2,\ldots, d_k$ of each
individual element of the $k$-tuple; that is
\[
d_1|n+a_1, \ d_2|n+a_2, \ldots, d_k|n+a_k.
\]
Now, instead of $d\leq R$, we take $d_1d_2\ldots d_k\leq R$.

\subsection{The set up}
One can proceed much as in the previous section, though technically it is easier to restrict our attention to when
$n$ is in an appropriate congruence class mod $m$ where $m$ is the product of the primes for which $\omega(p)<k$,
because, if $\omega(p)=k$ then $p$ can only divide one $n+a_i$ at a time. Therefore we study
\[
S_0:=\sum_{r\in \Omega(m)} \sum_{\substack{ n\sim x \\ n\equiv r \pmod m}}
\left( \sum_{j=1}^k \theta(n+a_j) - h \log 3x\right)
\left(
\sum_{ d_i|n+a_i \ \text{for each} \ i } \lambda(d_1,\ldots, d_k)
\right)^2
\]
which upon expanding, as $(d_i,m)|(n+a_i,m)=1$, equals
\[
\sum_{\substack{ d_1,\ldots, d_k \geq 1 \\ e_1,\ldots, e_k \geq 1\\ (d_ie_i,m)=1\ \text{for each} \ i }}
\lambda(d_1,\ldots, d_k)\lambda(e_1,\ldots, e_k)
\sum_{r\in \Omega(m)}
   \sum_{\substack{ n\sim x \\ n\equiv r \pmod m \\ [d_i,e_i] | n+a_i\text{for each} \ i \ }}
\left( \sum_{j=1}^k \theta(n+a_j) - h \log 3x\right)  .
\]
Next notice that $[d_i,e_i]$ is coprime with $[d_j,e_j]$ whenever $i\ne j$, since their gcd divides
$(n+a_j)-(n+a_i)$, which divides $m$, and so equals $1$ as $(d_ie_i,m)=1$. Hence, in our internal sum, the values of $n$ belong to an arithmetic progression with modulus  $m\prod_i [d_i,e_i]$. Also notice that if $n+a_j$ is prime then $d_j=e_j=1$.

Therefore,
ignoring error terms,
\[
S_0=   \sum_{\substack{1\leq \ell\leq k}} \frac{\omega(m) } {\phi(m)}  S_{2,\ell} \cdot x
-h \frac{\omega(m) } m S_1 \cdot x\log 3x
\]
where
\[
S_1:= \sum_{\substack{ d_1,\ldots, d_k \geq 1 \\ e_1,\ldots, e_k \geq 1 \\
(d_i,e_j)=1 \ \text{for} \ i\ne j   }}
\frac{\lambda(d_1,\ldots, d_k)\lambda(e_1,\ldots, e_k)  }{\prod_i\ [d_i,e_i]}
\]
and
\[
S_{2,\ell}:= \sum_{\substack{ d_1,\ldots, d_k \geq 1 \\ e_1,\ldots, e_k \geq 1\\
(d_i,e_j)=1 \ \text{for} \ i\ne j \\ d_\ell=e_\ell=1  }}
\frac{\lambda(d_1,\ldots, d_k)\lambda(e_1,\ldots, e_k)  }{\prod_i\phi( [d_i,e_i])}.
\]
These sums can be evaluated in several ways. Tao (see his blog) gave what is  perhaps the simplest approach, generalizing the Fourier analysis technique discussed in section \ref{sect:TaoApproach} (see \cite{polymath8b}, Lemma 4.1). Maynard \cite{maynard} gave what is, to my taste, the more elegant approach, generalizing Selberg's combinatorial technique:

\subsection{The combinatorics} The reciprocity law generalizes quite beautifully to higher dimension:\ Suppose that $L(d)$ and $Y(r)$ are two sequences of complex numbers, indexed by $d,r\in \mathbb Z_{\geq 1}^k$, and non-zero only when each $d_i$ (or $r_i$) is  squarefree. Then
 \[
 L(d_1,\ldots, d_k) = \prod_{i=1}^k  \mu(d_i)  \sum_{\substack{ r_1,\ldots , r_k\geq 1 \\ d_i|r_i \ \text{for all } i}} Y(r_1,\ldots , r_k)
 \]
 if and only if
  \[
Y(r_1,\ldots , r_k)  = \prod_{i=1}^k  \mu(r_i)  \sum_{\substack{ d_1,\ldots, d_k \geq 1 \\ r_i|d_i \ \text{for all } i}}
 L(d_1,\ldots, d_k) .
  \]
We use this much as above, in the first instance with
 \[
L(d_1,\ldots, d_k) = \frac{\lambda(d_1,\ldots, d_k) } {d_1,\ldots, d_k}  \ \text{   and    }
Y(r_1,\ldots , r_k)  = \frac{y(r_1,\ldots, r_k) } {\phi_k(r_1\ldots r_k)}
 \]
where
\[
y(r_1,\ldots, r_k) =   F\left(  \frac{\log r_1}{\log R},\ldots,  \frac{\log r_k}{\log R} \right)
\]
with $F\in \mathbb C[t_1,\ldots, t_k]$, such that that there is a uniform bound on all of the first order partial derivatives, and $F$ is only supported on
\[
T_k:=\{ (t_1,\ldots, t_k):\ \text{Each} \ t_j\geq 1, \text{ and} \ t_1+\ldots+ t_k\leq 1\}.
 \]
 Proceeding much as before we obtain
\begin{equation} \label{S1value}
S_1  \sim  \sum_{ r_1,\ldots , r_k\geq 1  } \frac{y(r_1,\ldots , r_k) ^2}{\phi_k(r_1\ldots  r_k)} .
\end{equation}

\subsection{Sums of multiplicative functions}  By \eqref{SD} we have
\begin{equation}\label{sumeval}
 \sum_{\substack{ 1\leq n\leq N \\  (n,m )=1} }   \frac{\mu^2(n)}{\phi_k(n)} =
 \prod_{p|m}  \frac {p-1}{p}  \prod_{p\nmid m}      \frac{(p-1)\phi_{k-1}(p)}{p\ \phi_k(p)} \cdot (\log N+O(1))
\end{equation}
We apply  this $k$ times; firstly with $m$ replaced by $mr_1\ldots r_{k-1}$ and $n$ by $r_k$, then with $m$ replaced by $mr_1\ldots r_{k-2}$, etc By the end we obtain
\begin{equation} \label{step0}
C_m(a) \sum_{\substack{1\leq  r_1\leq R_1, \\ \ldots , \\ 1\leq  r_k\leq R_k} } \frac{\mu^2(r_1\ldots r_k m)}{\phi_k(r_1,\ldots , r_k)}  \ =    \prod_i (\log R_i+O(1)) ,
\end{equation}
where
\[
C_m(a):=\ \prod_{p| m}   \left( 1 -\frac{ 1 } {p }\right) ^{-k}
 \prod_{p\nmid  m}  \left( 1 -\frac{ k } {p }\right)   \left( 1 -\frac{ 1 } {p }\right)^{-k} .
\]
From this, and partial summation, we deduce from \eqref{S1value}, that
\begin{equation} \label{ResultS1}
C_m(a) S_1 \sim
(\log R)^k  \cdot  \int_{t_1,\ldots, t_k\in T_k}  F(t_1,\ldots, t_k)^2 dt_k\ldots dt_1 .
\end{equation}

Had we stopped our calculation one step earlier we would have found
\begin{equation} \label{step1}
 C_m(a) \sum_{\substack{1\leq  r_1\leq R_1, \\ \ldots , \\ 1\leq  r_{k-1}\leq R_{k-1}} } \frac{\mu^2(r_1\ldots r_{k-1} m)}{\phi_k(r_1,\ldots , r_{k-1})}  \ = \ \frac m{\phi(m)}   \cdot  \prod_i (\log R_i+O(1)) ,
\end{equation}

\begin{remark} If we replace $\lambda(d)\lambda(e)$ by $\lambda_1(d)\lambda_2(e)$ in the definition of $S_1$,
then the analogous argument yields  $\int_{t\in T_k}  F(t)^2 dt$ replaced by $\int_{t\in T_k}  F_1(t)F_2(t) dt$ in \eqref{ResultS1}.
\end{remark}

\subsection{The combinatorics, II}  We will deal only with the case $\ell=k$, the other cases being analogous. Now we use the higher dimensional reciprocity law with
\[
L(d_1,\ldots, d_{k-1}) = \frac{\lambda(d_1,\ldots, d_{k-1},1) } {\phi( d_1\ldots d_{k-1}) }  \ \text{   and    }
Y_k(r_1,\ldots , r_{k-1})  = \frac{y_k(r_1,\ldots, r_{k-1}) } {\phi_k(r_1\ldots r_{k-1})}
 \]
where $d_k=r_k=1$, so that, with the exactly analogous calculations as before,
\[
S_{2,k} \sim \sum_{ r_1,\ldots , r_{k-1}\geq 1  } \frac{y_k(r_1,\ldots , r_{k-1}) ^2}{\phi_k(r_1\ldots  r_{k-1})} .
\]
Using the reciprocity law twice to determine the $y_k(r)$ in terms of the $y(n)$, we obtain that
\[
y_k(r_1,\ldots , r_{k-1})\sim  \frac{\phi(m)}{m} \cdot
       \int_{t\geq 0} F(\rho_1,\ldots, \rho_{k-1}, t) dt\cdot   \log R
\]
where each $r_i=N^{\rho_i}$. Therefore, using \eqref{step1}, we obtain
\begin{equation} \label{ResultS2}
C_m(a)S_{2,k}\sim
  \  \int_{0\leq t_1,\ldots, t_{k-1}\leq 1}  \left( \int_{t_k\geq 0} F(t_1,\ldots, t_{k-1}, t_k) dt_k\right)^2 dt_{k-1}\ldots dt_1   \cdot \frac{\phi(m)}{m}    (\log R)^{k+1} .
\end{equation}

\begin{remark} If we replace $\lambda(d)\lambda(e)$ by $\lambda_1(d)\lambda_2(e)$ in the definition of $S_2$,
then the analogous argument yields  $(\int_{t_k\geq 0}  F(t) dt_k)^2$ replaced by $(\int_{t_k\geq 0}  F_1(t) dt_k)(\int_{t_k\geq 0}  F_2(t) dt_k)$ in \eqref{ResultS2}.
\end{remark}

\subsection{Finding a positive difference} By the Bombieri-Vinogradov Theorem we can take $R=x^{1/4-o(1)}$, so that, by \eqref{ResultS1} and  \eqref{ResultS2},  $C_m(a) S_0$ equals
$\frac{\omega(m) } {m}  x(\log 3x) (\log R)^k$ times
\[
\frac 14 \sum_{\ell=1}^{k}  \int_{\substack{0\leq t_i\leq 1 \ \text{for} \\ 1\leq i\leq k,\ \ i\ne \ell }}  \left( \int_{t_\ell\geq 0} F(t_1,\ldots, t_k) dt_\ell\right)^2 \prod_{\substack{1\leq j\leq k\\  i\ne \ell}}  dt_j     -  h  \int_{t_1,\ldots, t_k\in T_k}  F(t_1,\ldots, t_k)^2 dt_k\ldots dt_1 +o(1) .
\]
One can show that the optimal choice for  $F$ must be symmetric. Hence  $S_0>0$  follows if there exists a symmetric $F$ (with  the restrictions above) for which   the ratio
\[
\rho(F):=\frac{k \int_{t_1,\ldots,t_{k-1}\geq 0}  \left( \int_{t_k\geq 0} F(t_1,\ldots, t_k) dt_k\right)^2dt_{k-1} \ldots dt_1   }{\int_{t_1,\ldots, t_k\geq 0}  F(t_1,\ldots, t_k)^2 dt_k\ldots dt_1 } .
\]
satisfies $\rho(F)>4h$.

We have proved the following Proposition which leaves us to find functions $F$ with certain properties, in order to obtain the main results:

\begin{proposition} \label{TechProp} Fix $h\geq 1$.  Suppose that there exists $F\in \mathbb C(x_1,\ldots,x_k)$ which is measurable, supported on $T_k$, for which there is a uniform bound on the first order partial derivatives and such that
 $\rho(F)>4h$. Then,   for every admissible $k$-tuple of linear forms, there are infinitely many integers $n$ such that there are $>h$ primes amongst the $k$ linear forms when evaluated at $n$. If  the Elliott-Halberstam conjecture holds then we  only need  that $\rho(F)>2h$.
\end{proposition}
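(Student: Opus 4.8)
The plan is to assemble the pieces already set up in Section~\ref{GPYhigh} and to make rigorous the error terms that were there ``ignored''. Fix an admissible $k$-tuple of linear forms $\mathcal H=(a_1<\dots<a_k)$ and a large parameter $x$, let $m$ be the product of the primes $p$ with $\omega(p)<k$, and pick $F$ as in the hypothesis; by the remark preceding the Proposition we may take $F$ real-valued and symmetric. Define $\lambda(d_1,\dots,d_k)$ from $y(r_1,\dots,r_k)=F\!\left(\tfrac{\log r_1}{\log R},\dots,\tfrac{\log r_k}{\log R}\right)$ via the higher-dimensional reciprocity law, with $R$ to be chosen below, and form $S_0$ as in the set-up. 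The first step is the elementary observation that $S_0>0$ is enough: since the weight is a square, some summand of $S_0$ must be positive, which yields an integer $n\in(x,2x]$ in the prescribed class mod $m$ with $\sum_{j=1}^k\theta(n+a_j)>h\log 3x$; but each $n+a_j<2x+a_k<3x$, so each $\theta(n+a_j)<\log 3x$, forcing at least $h+1$ of the $n+a_j$ to be prime. Running this for $x=x_0,2x_0,4x_0,\dots$ produces infinitely many such $n$.

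The second, and main, step is the asymptotic evaluation of $S_0$. Expanding the square and using that the moduli $[d_i,e_i]$ are pairwise coprime and coprime to $m$, one sees (exactly as in the excerpt) that the inner sum over $n$ runs over an arithmetic progression of modulus $m\prod_i[d_i,e_i]$, so that $S_0$ decomposes into a ``counting'' contribution and $k$ ``prime'' contributions $S_{2,\ell}$, the latter forcing $d_\ell=e_\ell=1$ since a prime $n+a_\ell$ cannot be divisible by $[d_\ell,e_\ell]>1$. In the counting contribution the number of admissible $n$ is $\tfrac{x}{mD}+O(1)$ with $D=\prod_i[d_i,e_i]\le R^2$, and since $|\lambda(d_1,\dots,d_k)|\ll(\log R)^{O_k(1)}$ (inherited from the uniform bound on the partials of $F$) the total $O(1)$-error is $\ll R^2(\log R)^{O_k(1)}$, which is negligible once $R\le x^{1/2-\eps}$. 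In the prime contributions one replaces $\Theta(2x;D',b)-\Theta(x;D',b)$ by $x/\phi(D')$ for $D'=m\prod_{i\ne\ell}[d_i,e_i]$; here the admissibility of $\mathcal H$ and the coprimality conditions guarantee $(b,D')=1$. The accumulated discrepancy is handled by the standard device: each modulus $D'$ occurs with multiplicity $\ll\tau(D')^{O_k(1)}$ in the representations $D'=m\prod_{i\ne\ell}[d_i,e_i]$, one bounds $|\lambda|$ as above, applies Cauchy--Schwarz together with $D'\,\Theta(x;D',b)\le(x+D')\log x$ and Mertens' theorem, and is left with $\sum_{D'\le m R^2}\tau(D')^{C}\max_{(a,D')=1}|\Theta(x;D',a)-x/\phi(D')|$, which the Bombieri--Vinogradov Theorem bounds by $x/(\log x)^A$ as soon as $mR^2\le x^{1/2-\eps}$, i.e.\ $R=x^{1/4-o(1)}$. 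Under the Elliott--Halberstam conjecture the same estimate holds up to $D'<x^{1-\eps}$, allowing $R=x^{1/2-o(1)}$.

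The third step just collects the combinatorial evaluations already proved. By \eqref{ResultS1} and \eqref{ResultS2} (the cases $\ell=1,\dots,k$ being identical), the main term of $C_m(a)S_0$ equals $\tfrac{\omega(m)}{m}\,x(\log 3x)(\log R)^k$ times
\[
\frac{\log R}{\log 3x}\sum_{\ell=1}^{k}\int\Big(\int_{t_\ell\ge0}F\,dt_\ell\Big)^{2}\prod_{j\ne\ell}dt_j\;-\;h\int_{t_1,\dots,t_k\in T_k}F(t_1,\dots,t_k)^2\,dt_k\cdots dt_1\;+\;o(1).
\]
With $R=x^{1/4-o(1)}$ the prefactor $\tfrac{\log R}{\log 3x}\to\tfrac14$, and since $F$ is symmetric $\sum_\ell\int(\int F\,dt_\ell)^2\prod_{j\ne\ell}dt_j=k\int_{t_1,\dots,t_{k-1}\ge0}(\int_{t_k\ge0}F\,dt_k)^2\,dt_{k-1}\cdots dt_1$, so the displayed quantity is $\big(\tfrac14\rho(F)-h\big)\int_{T_k}F^2$, which is $>0$ exactly when $\rho(F)>4h$. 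Hence $S_0>0$ for all sufficiently large $x$, and the conclusion follows from the first step. Under Elliott--Halberstam the prefactor tends to $\tfrac12$ and the threshold drops to $\rho(F)>2h$.

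The hard part is the error bookkeeping of the second step: one must check that the reciprocity transform produces $\lambda$ with only polynomial-in-$\log R$ growth, that the number of factorisations $D=m\prod_i[d_i,e_i]$ into pairwise coprime squarefree parts is $\ll\tau(D)^{O_k(1)}$, and that the $\tau(D)^C$-weighted mean of discrepancies genuinely still obeys a Bombieri--Vinogradov-type bound up to $D<x^{1/2-o(1)}$; all of this is routine but must be done carefully. By contrast the only genuinely arithmetic input is the Bombieri--Vinogradov Theorem itself, which is unconditional --- this is precisely why the Proposition yields an unconditional result, the Elliott--Halberstam conjecture affording only the quantitative improvement from $4h$ to $2h$.
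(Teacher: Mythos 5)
Your proof is correct and takes essentially the same route as the paper: you assemble the set-up, the decomposition into $S_1$ and $S_{2,\ell}$, the combinatorial evaluations \eqref{ResultS1}--\eqref{ResultS2}, and the Bombieri--Vinogradov (resp.\ Elliott--Halberstam) input giving $R=x^{1/4-o(1)}$ (resp.\ $x^{1/2-o(1)}$), exactly as the paper does before stating the Proposition. The main added value of your write-up is that you make explicit two things the paper leaves implicit at this point: the elementary pigeonhole step showing that $S_0>0$ forces more than $h$ primes (since each $\theta(n+a_j)<\log 3x$), and the error-term bookkeeping (bounding $|\lambda|\ll(\log R)^{O_k(1)}$, absorbing the $O(1)$'s for $R\le x^{1/2-\eps}$, handling the multiplicity of moduli via $\tau(D')^{O_k(1)}$ and Cauchy--Schwarz before applying Bombieri--Vinogradov) --- which the paper only gestures at with ``ignoring error terms'' and treats more carefully later in the Zhang-specific Section~5.
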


\subsection{A special case} If $F(t_1,\ldots,t_k)=f(t_1+\ldots+t_k)$ then since
\[
\int_{\substack{t_1,\ldots, t_k\geq 0 \\ t_1+\ldots+t_k=t}}  dt_{k-1} \ldots dt_1 = \frac{t^{k-1}}{(k-1)!} ,
\]
we deduce that
\[
\rho(F)=\rho_k(f)
\]
 as defined in \eqref{Rhok}. That is, we have reverted to the original GPY argument, which was not quite powerful enough for our needs. We want to select $F$ that does not lead us back to the original GPY argument, so we should avoid selecting $F$ to be a function of one variable.

 Since $F$ is symmetric, we can define the symmetric sums,  $P_j=\sum_{i=1}^k t_i^j$. In the GPY argument $F$ was a function of $P_1$. A first guess might be to work now with functions of $P_1$ and $P_2$, so as to consider functions $F$ that do not appear in the GPY argument.

\subsection{Maynard's $F$s, and gaps between primes}    For $k=5$ let
\[
F(t_1,\ldots, t_5) = 70P_1P_2 - 49P_1^2 - 75P_2 + 83P_1-34 .
\]
A  calculation yields that
\[
\rho(F)=\frac{1417255}{708216}>2.
\]
Therefore, by Proposition \ref{TechProp},  if we assume the Elliott-Halberstam conjecture with $h=1$ then for every admissible $5$-tuple of linear forms, there are infinitely many integers $n$ such that there are at least two primes amongst the five linear forms when evaluated at $n$. In particular, from the admissible forms $\{ x, x+2, x+6, x+8, x+12\}$ we deduce that there are infinitely many pairs of distinct primes that differ by no more than $12$.  Also from the admissible forms $\{ x+1, 2x+1, 4x+1, 8x+1, 16x+1\}$ we deduce that there are infinitely many pairs of distinct primes, $p,q$ for which $(p-1)/(q-1)=2^j$ for $j=0,1,2,3$ or $4$.

Maynard \cite{maynard} showed that there exists a polynomial of the form
\[
\sum_{\substack{a,b\geq 0 \\ a+2b\leq 11}} c_{a,b} (1-P_1)^a P_2^b
\]
with $k=105$, for which
\[
\rho(F)=4.0020697 \ldots
\]
By Proposition \ref{TechProp}  with $h=1$, we can then deduce that for every admissible $105$-tuple of linear forms, there are infinitely many integers $n$ such that there are at least two primes amongst the $105$ linear forms when evaluated at $n$.

How did Maynard find his polynomial, $F$, of the above form? The numerator and denominator of $\rho(F)$ are quadratic forms in the 42 variables $c_{a,b}$, say $v^TM_2v$ and $v^TM_1v$, respectively, where $v$ is the vector of $c$-values. The matrices $M_1$ and $M_2$ are easily determined.    By the theory of Lagrangian multipliers, Maynard showed that
\[
M_1^{-1}M_2 v = \rho(F) v
\]
so that $\rho(F)$ is the largest eigenvalue of $M_1^{-1}M_2$, and $v$ is the corresponding eigenvector. These calculations are easily accomplished using a computer algebra package and yield the result above.

\subsection{$F$ as a product of one dimensional functions}  We  select
 \[
 F(t_1,\ldots t_k) = \begin{cases}  g(kt_1)\ldots g(kt_k) & \text{if} \ t_1+\ldots + t_k\leq 1 \\
 0 &\text{otherwise}, \end{cases}
 \]
where $g$ is some integrable function supported only on $[0,T]$. Let   $\gamma:=\int_{t\geq 0} g(t)^2dt$,
so that the denominator of $\rho(F)$ is
\[
I_k= \int_{t\in T_k} f(t_1,\ldots t_k)^2 dt_k\ldots dt_1 \leq  \int_{t_1,\ldots,t_k\geq 0} (g(kt_1)\ldots g(kt_k) )^2 dt_k\ldots dt_1 = k^{-k}\gamma^k.
 \]
We rewrite the numerator of $\rho(F)$ as
$L_k-M_k$ where
 \[
L_k:= k \int_{t_1,\ldots, t_{k-1}\geq 0} \left( \int_{t_k\geq 0} g(kt_1)\ldots g(kt_k) dt_k\right)^2 dt_{k-1}\ldots dt_1
 =  k^{-k} \gamma^{k-1} \left(  \int_{t\geq 0} g(t) dt\right)^2   .
\]
As $g(t)$ is only supported in $[0,T]$ we have, by Cauchying and letting $u_j=kt_j$,
\begin{align*}
M_k:&=  \int_{t_1,\ldots, t_{k-1}\geq 0} \left( \int_{t_k\geq 1-t_1-\ldots-t_{k-1}} g(kt_1)\ldots g(kt_k) dt_k\right)^2 dt_{k-1}\ldots dt_1 \\
& \leq k^{-k} T \int_{\substack{u_1,\ldots, u_k\geq 0 \\ u_1+\ldots+ u_k\geq k}}  g(u_1)^2\ldots g(u_k)^2 du_1\ldots du_k.
\end{align*}
Now assume that $\mu:=\int_t tg(t)^2dt\leq  (1-\eta) \int_t g(t)^2dt=(1-\eta) \gamma$ for some given $\eta>0$;  that is, that the ``weight'' of $g^2$ is centered around values of $t\leq 1-\eta$. We have
\[
1\leq \eta^{-2}\left( \frac 1k (u_1+\ldots+u_k) - \mu/\gamma\right)^2
\]
 whenever $u_1+\ldots+ u_k\geq k$. Therefore,
\begin{align*}
M_k &\leq \eta^{-2} k^{-k} T \int_{u_1,\ldots, u_k\geq 0}  g(u_1)^2\ldots g(u_k)^2 \left( \frac 1k (u_1+\ldots+u_k) - \mu/\gamma\right)^2 du_1\ldots du_k\\
&= \eta^{-2} k^{-k-1} T \int_{u_1,\ldots, u_k\geq 0}  g(u_1)^2\ldots g(u_k)^2 ( u_1^2 - \mu^2/\gamma^2) du_1\ldots du_k\\
&= \eta^{-2} k^{-k-1}  \gamma^{k-1}T \left( \int_{u\geq 0}  u^2g(u)^2du- \mu^2/\gamma\right)
\leq  \eta^{-2} k^{-k-1}  \gamma^{k-1}T   \int_{u\geq 0}  u^2g(u)^2du,
\end{align*}
by symmetry. We deduce that
 \begin{equation} \label{1stLowerBound}
 \rho(F) \geq \frac { \left(  \int_{t\geq 0} g(t) dt\right)^2 - \frac{\eta^{-2} T}k \int_{u\geq 0}  u^2g(u)^2du}
 {  \int_{t\geq 0} g(t)^2dt  } .
\end{equation}
Notice that we can multiply $g$ through by a scalar and not affect the value in \eqref{1stLowerBound}.

\subsection{The optimal choice} We wish to find the value of $g$ that maximizes the right-hand side of \eqref{1stLowerBound}. This can be viewed as an optimization problem:
\smallskip

\emph{Maximize} $\int_{t\geq 0} g(t) dt$, \ subject to the constraints
$\int_{t\geq 0} g(t)^2 dt=\gamma $ and $\int_{t\geq 0} tg(t)^2 dt=\mu $.
\smallskip

One can approach this using the calculus of variations or even by discretizing $g$ and employing the technique of  Lagrangian multipliers. The latter gives rise to (a discrete form of)
\[
\int_{t\geq 0} g(t) dt -\alpha \left( \int_{t\geq 0} g(t)^2 dt-\gamma\right) -\beta \left( \int_{t\geq 0} tg(t)^2 dt-\mu\right) ,
\]
for unknowns $\alpha$ and $\beta$. Differentiating with respect to $g(v)$ for each $v\in [0,T]$, we obtain
\[ 1 - 2\alpha g(v) - 2\beta v g(v) = 0;\]
that is, after re-scaling,
\[
g(t)=\frac 1 {1+At}\ \ \text{for} \ \ 0\leq t\leq T,
\]
for some real $A>0$. We select $T$ so that $1+AT=e^A$, and let $A>1$.
We then calculate the   integrals in \eqref{1stLowerBound}:
\begin{align*}
 \gamma=\int_t g(t)^2dt \  &= \frac 1A (1-e^{-A}),\\
 \int_t tg(t)^2dt &= \frac1{A^2}\left( A -1+e^{-A}\right),\\
 \int_t t^2g(t)^2dt &=  \frac1{A^3}\left( e^{A}-2 A - e^{-A} \right), \\
 \text{and} \qquad \qquad  \int_t g(t)dt \ \ \  &= 1, \\
  \text{so that} \qquad \qquad \qquad  \eta\ \ \ &=   \frac{1-(A-1)e^{-A}} {A(1-e^{-A})} \ >0,
 \end{align*}
which is necessary. \eqref{1stLowerBound} then becomes
 \begin{equation} \label{2ndLowerBound}
 \rho(F) \geq   \frac A { (1-e^{-A}) } -
       \frac{e^{2A}}{Ak}\left( 1-2 Ae^{-A} - e^{-2A} \right)  \frac{  (1-e^{-A})^2 } {   (1-(A-1)e^{-A})^2}\geq
      A -  \frac{e^{2A}}{Ak}
\end{equation}
Taking  $A=\frac 12 \log k+\frac 12 \log\log k$, we deduce that
 $$\rho(F) \geq \frac 12 \log k+\frac 12 \log\log k-2.$$
 Hence, for every $m\geq 1$ we find that
$\rho(F)>4m$ provided $ e^{8m+4}<  k \log k$.

This implies the following result:

\begin{theorem} For any given integer $m\geq 2$, let $k$ be the smallest integer with $k \log k>e^{8m+4}$.  For \emph{any} admissible $k$-tuple of linear forms $L_1,\ldots, L_k$ there exists infinitely many integers $n$ such that at least $m$ of the $L_j(n),\ 1\leq j\leq k$ are prime.
\end{theorem}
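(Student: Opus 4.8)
The statement follows by combining Proposition~\ref{TechProp} with the explicit test function constructed in the two preceding subsections; the plan is to assemble those pieces and optimize the single remaining free parameter. First I would recall the function: for the given $k$ put $F(t_1,\ldots,t_k)=g(kt_1)\cdots g(kt_k)$ on the simplex $T_k$ and $F=0$ off it, where $g(t)=1/(1+At)$ on $[0,T]$, $g\equiv 0$ beyond $T$, with $A>1$ and $T=(e^A-1)/A$ chosen so that $1+AT=e^A$. The integral evaluations already carried out — in particular $\gamma:=\int_0^T g^2=(1-e^{-A})/A$, $\int_0^T t\,g^2=(1-\eta)\gamma$ with $\eta=\frac{1-(A-1)e^{-A}}{A(1-e^{-A})}>0$, $\int_0^T t^2g^2=A^{-3}(e^A-2A-e^{-A})$ and $\int_0^T g=1$ — substituted into \eqref{1stLowerBound} give the bound \eqref{2ndLowerBound}, whose last clause is the crude consequence
\[
\rho(F)\ \geq\ A-\frac{e^{2A}}{Ak}.
\]

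Next I would choose $A=\tfrac12\log k+\tfrac12\log\log k$. Then $e^{2A}=k\log k$, so $\dfrac{e^{2A}}{Ak}=\dfrac{\log k}{A}=\dfrac{2\log k}{\log k+\log\log k}<2$ (legitimate since our $k$ is large: $k\log k>e^{8m+4}$ with $m\geq 2$ already forces $A>10$), giving
\[
\rho(F)\ \geq\ A-2\ =\ \tfrac12\log k+\tfrac12\log\log k-2 .
\]
Hence $\rho(F)>4m$ whenever $\tfrac12\log k+\tfrac12\log\log k-2>4m$, i.e.\ $\log(k\log k)=\log k+\log\log k>8m+4$, i.e.\ $k\log k>e^{8m+4}$. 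For the $k$ in the statement — the least integer with $k\log k>e^{8m+4}$ — this holds, so $\rho(F)>4m$; Proposition~\ref{TechProp} applied with $h=m$ then yields, for every admissible $k$-tuple of linear forms $L_1,\ldots,L_k$, infinitely many $n$ for which more than $m$, and in particular at least $m$, of the $L_j(n)$ are prime.

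The substantive content has already been spent: the asymptotics \eqref{ResultS1} and \eqref{ResultS2} (via the higher-dimensional Selberg reciprocity law and the Selberg-Delange estimate \eqref{SD}), together with the Cauchy-Schwarz bound on $M_k$ that produced the $\int u^2 g^2$ term in \eqref{1stLowerBound}; relative to those, the deduction above is bookkeeping, and the choice $g(t)=1/(1+At)$ is precisely the one forced by the Lagrange-multiplier computation for the optimization in \eqref{1stLowerBound}, so nothing is lost by adopting it. The one place that deserves care is the regularity of the product function $F$: it is discontinuous across the hyperplanes $\{t_i=T/k\}$ (since $g(T)=e^{-A}\neq 0$) and meets the boundary of $T_k$, so one must either verify directly that the proof of Proposition~\ref{TechProp} tolerates bounded, piecewise-smooth $F$ whose partials are bounded off a null set — the discontinuities affecting only lower-order error terms — or else replace $g$ by a smooth mollification, which alters every integral in \eqref{1stLowerBound} and \eqref{2ndLowerBound}, and hence $\rho(F)$, by $o(1)$, leaving the conclusion intact. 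That verification is the only step I would expect to cost any thought; the optimization over $A$ and the translation into a condition on $k$ are elementary.
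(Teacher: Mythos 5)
Your proof is correct and follows the paper's own argument exactly: the same product ansatz $F=\prod_i g(kt_i)$ with $g(t)=1/(1+At)$ on $[0,T]$, the same lower bound \eqref{2ndLowerBound}, the same choice $A=\tfrac12\log k+\tfrac12\log\log k$ giving $e^{2A}/(Ak)<2$, and the same appeal to Proposition~\ref{TechProp} with $h=m$. The closing remark about the boundary (non-)regularity of $F$ is a reasonable extra caution that the paper leaves tacit, and the suggested mollification does resolve it.
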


For any $m\geq 1$, we let $k$ be the  smallest integer with $k \log k>e^{8m+4}$, so that $k>10000$; in this range it is known that $\pi(k)\leq \frac k{\log k-4}$.  Next we let $x=2k\log k >10^5$ and, for this range it is known that $\pi(x)\geq \frac x{\log x}( 1 + \frac 1 {\log x} )$. Hence
\[
\pi(2k\log k)-\pi(k) \geq \frac {2k\log k}{\log (2k\log k)}\left( 1 + \frac 1 {\log (2k\log k)} \right) - \frac k{\log k-4}
\]
and this is $>k$ for $k\geq 311$ by an easy calculation. We therefore apply the theorem with the $k$ smallest primes $>k$, which form an admissible set $\subset [1,2k\log k]$, to obtain:

\begin{corollary} For any given integer $m\geq 2$, let $B_m=e^{8m+5}$. There are  infinitely many integers $x$ for which there are at least $m$ distinct primes within the interval $[x,x+B_m]$.
\end{corollary}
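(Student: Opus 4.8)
The plan is to feed an explicit admissible $k$-tuple of small diameter into the Theorem proved just above, and then to check that this diameter is at most $B_m$. Given $m\geq 2$, let $k$ be the smallest integer with $k\log k>e^{8m+4}$, exactly as in that Theorem. Since $e^{8m+4}\geq e^{20}$, this forces $k>10^4$ (and hence $2k\log k>10^5$), so we land in a range where sharp effective forms of the prime number theorem are available; we will only need $\pi(k)\leq k/(\log k-4)$ and $\pi(x)\geq (x/\log x)\bigl(1+1/\log x\bigr)$.

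First I would exhibit the admissible set. Take $\mathcal H$ to consist of the $k$ smallest primes exceeding $k$. It is admissible: for a prime $p\leq k$, no element of $\mathcal H$ is $\equiv 0\pmod p$ (every element is a prime larger than $k\geq p$), so $p$ is not an obstruction; and for $p>k$ the $k$ residues occupied by $\mathcal H$ cannot fill all $p$ residue classes, so such $p$ are never obstructions. Thus the forms $x+a$, $a\in\mathcal H$, constitute an admissible $k$-tuple of linear forms.

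Next I would bound the diameter of $\mathcal H$. It suffices to show $\pi(2k\log k)-\pi(k)\geq k$, for then the $k$ smallest primes exceeding $k$ all lie in $(k,2k\log k]$, whence $\mathcal H\subset[1,2k\log k]$. By the two explicit estimates above (the second applied at $x=2k\log k$),
\[
\pi(2k\log k)-\pi(k)\ \geq\ \frac{2k\log k}{\log(2k\log k)}\left(1+\frac{1}{\log(2k\log k)}\right)-\frac{k}{\log k-4},
\]
and since $\log(2k\log k)<2\log k$, a routine calculation shows the right-hand side exceeds $k$ throughout our range. Applying the Theorem to $\mathcal H$ then produces infinitely many integers $n$ for which at least $m$ of the numbers $n+a$, $a\in\mathcal H$, are prime; since each $a$ lies in $[1,2k\log k]$, each such $n$ gives an interval $[n,n+2k\log k]$ containing at least $m$ primes.

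It remains only to check $2k\log k\leq B_m=e^{8m+5}$. By minimality of $k$ we have $(k-1)\log(k-1)\leq e^{8m+4}$, and since $t\mapsto t\log t$ is increasing with derivative $\log t+1$ we get $k\log k\leq e^{8m+4}+\log k+1$; as $\log k$ (which is of size roughly $8m+4$) is negligible against $e^{8m+4}$, this yields $2k\log k<e\cdot e^{8m+4}=e^{8m+5}$ because $2<e$. Hence $[n,n+2k\log k]\subseteq[n,n+B_m]$ and the corollary follows. The only ``hard part'' is the arithmetic bookkeeping in the two explicit inequalities $\pi(2k\log k)-\pi(k)>k$ and $2k\log k<e^{8m+5}$, each of which comes down to tracking constants in known effective prime-counting bounds; all the genuine mathematics sits in the Theorem, which itself rests on Proposition \ref{TechProp} together with the optimized choice $g(t)=1/(1+At)$ with $A=\tfrac12\log k+\tfrac12\log\log k$.
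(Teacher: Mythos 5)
Your argument is essentially identical to the paper's: you choose $k$ minimal with $k\log k>e^{8m+4}$, take the $k$ smallest primes exceeding $k$ as the admissible tuple, use the same two explicit bounds on $\pi$ to show those primes all lie in $(k,2k\log k]$, and invoke the preceding Theorem. The only difference is cosmetic: you spell out the final check that $2k\log k<e^{8m+5}$ (via the mean-value bound $k\log k\leq e^{8m+4}+\log k+1$ and $2<e$), which the paper leaves implicit.
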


By a slight modification of this construction, Maynard in \cite{maynard} obtains
\begin{equation}\label{Maybound}
\rho(F) \geq  \log k-2 \log\log k-1+o_{k\to\infty}(1)
\end{equation}
from which he analogously deduces that
$B_m\ll m^3 e^{4m}$.

By employing Zhang's improvement, Theorem \ref{Zhangthm}, to the Bombieri-Vinogradov Theorem, one can improve this  to $B_m\ll m e^{(4-\frac{28}{157})m}$ and if we assume the Elliott-Halberstam conjecture, to
$B_m\ll e^{(2+o(1))m}$. The key to significantly  improving these upper bounds on $B_m$ is to obtain a much better lower bound on $\rho(F)$.

\subsection{Tao's upper bound on $\rho(F)$}   By the Cauchy-Schwarz inequality we have
\[
 \left( \int_{t_k=0}^{1-t_1-\ldots -t_{k-1}} F(t_1,\ldots, t_k) dt_k\right)^2
  \leq \int_{t_k=0}^{1-t_1-\ldots -t_{k-1}}  \frac{dt_k}{1-t_1-\ldots -t_{k-1}+(k-1)t_k} \cdot
  \]
 \[
\times
 \int_{t_k=0}^{1-t_1-\ldots -t_{k-1}}  F(t_1,\ldots, t_k)^2 (1-t_1-\ldots -t_{k-1}+(k-1)t_k) dt_k.
 \]
Letting $u=A+(k-1)t$ we have
 \[
 \int_0^A \frac{dt}{A+(k-1)t} = \frac 1{k-1} \int_A^{kA} \frac{du}{u}  = \frac {\log k}{k-1} .
 \]
Applying this with $A=1-t_1-\ldots -t_{k-1}$ and $t=t_k$, yields an upper bound for the numerator of $\rho(F)$:
 \begin{align*}
& \leq
  \frac {\log k}{k-1}  \cdot  \int_{t_1,\ldots,t_{k}\geq 0}   F(t_1,\ldots, t_k)^2\sum_{j=1}^k (1-t_1-\ldots -t_{k}+kt_j) dt_k\ldots dt_1
 \\ &=
  \frac {k\log k}{k-1}  \cdot  \int_{t_1,\ldots,t_{k}\geq 0}   F(t_1,\ldots, t_k)^2  dt_k\ldots dt_1 .
\end{align*}
  Hence
\[
\rho(F) \leq   \frac {k\log k}{k-1}  = \log k +o_{k\to\infty}(1)
\]
 so there is little room for improvement of Maynard's upper bound, \eqref{Maybound}.


\part{Primes in arithmetic progressions; breaking the $\sqrt{x}$-barrier}
Our goal, in the rest of the article, is to sketch the ideas behind a proof of Yitang Zhang's extraordinary result, given in \eqref{EHsmooth}, that primes are well-distributed on average in the  arithmetic progressions $a \pmod q$ with $q$ a little bigger than $\sqrt{x}$.
We will see how this question fits into a more general framework, as developed by Bombieri, Friedlander and Iwaniec \cite{bfi}, so that Zhang's results  should also allow us to deduce analogous results for interesting arithmetic sequences other than the primes.

For the original, much deeper and more complicated proof of Yitang Zhang, the reader is referred to the insightful exposition by Kowalski \cite{Kowalski}, which motivates and develops these difficult ideas with great clarity. One can also read a slightly different development of Zhang's theorem by Friedlander and Iwaniec \cite{fi-4}, which incorporates various novel features.

To begin our discussion, we will introduce a key technique of analytic number theory, the idea of creating important sequences through convolutions:

\section{Convolutions in number theory}

The convolution of two functions $f$ and $g$, written  $f*g$, is defined by
\[
(f*g)(n) :=  \sum_{ab=n}  f(a)g(b),
\]
for every integer $n\geq 1$, where the sum is over all pairs of positive integers $a,b$ whose product is $n$. Hence if $\tau(n)$ counts the number of divisors of $n$ then
\[\tau=1*1,\] where $1$ is the function with $1(n)=1$ for every $n\geq 1$. We already saw, in \eqref{VMidentity}, that if  $L(n)=\log n$ then $\mu*L=\Lambda$.  In the GPY argument we used that $(1*\mu)(n)=0$ if $n>1$.

\subsection{Dirichlet's divisor sum} There is no better way to understand why convolutions are useful than to present a famous argument of Dirichlet, estimating the average of $\tau(n)$: If $n$ is squarefree and has $k$ prime factors then $\tau(n)=2^k$, so we see that $\tau(n)$ varies greatly depending on the arithmetic structure of $n$, but the average is more stable:
 \begin{align*}
 \frac 1x\sum_{n\leq x} \tau(n) &= \frac 1x \sum_{n\leq x} \sum_{d|n} 1 = \frac 1x \sum_{d|n} \sum_{\substack{n\leq x \\ d|n}} 1 =
 \frac 1x\sum_{d\leq x} \left[ \frac xd \right] \\
 &=  \frac 1x  \sum_{d\leq x} \left( \frac xd +O(1) \right)
 =  \sum_{d\leq x}  \frac 1d +O\left(     \frac 1x\sum_{d\leq x} 1 \right) .
 \end{align*}
One can approximate $\sum_{d\leq x}  \frac 1d$ by $\int_1^x dt/t =\log x$. Indeed  the difference tends to a limit, the Euler-Mascheroni constant $\gamma:= \lim_{N\to \infty} \frac 11+\frac 12 +\ldots +\frac 1N -\log N$. Hence we have proved  that the integers up to $x$ have $\log x+O(1)$ divisors, on average, which is quite remarkable for such a wildly fluctuating function.

Dirichlet studied this argument and noticed that when we approximate $[x/d]$ by $x/d+O(1)$ for large $d$, say for those $d$ in $(x/2,x]$, then this is not really a very good approximation, since $\sum_{x/2<d\leq x} ( x/d+O(1))=O(x)$,
a large cumulative error term. However we have the much more precise   $[x/d]=1$   for each of these $d$, and so we can estimate this sum by $x/2+O(1)$, a much better approximation. In general we write $n=dm$, where $d$ and $m$ are integers. When $d$ is small then we should fix $d$, and count the number of such $m$, with $m\leq x/d$ (as we did above); but when $m$ is small, then we should fix $m$, and count the number of  $d$ with $d\leq x/m$.  In this way our sums are all over long intervals, which allows us to get an accurate approximation of their value:
\begin{align*}
\frac 1x  \sum_{n\leq x} \tau(n) &= \frac 1x \sum_{n\leq x} \sum_{dm=n} 1 =
\frac 1x \sum_{d\leq \sqrt{x} }   \sum_{\substack{n\leq x \\ d|n}} 1+   \frac 1x\sum_{m< \sqrt{x} }   \sum_{\substack{n\leq x \\ m|n}} 1
 - \frac 1x \sum_{d\leq \sqrt{x} }    \sum_{m< \sqrt{x} }  1  \\
 &=
\frac 1x  \sum_{d\leq \sqrt{x} }   \left( \frac xd +O(1) \right)+  \frac 1x \sum_{m< \sqrt{x} }  \left( \frac xm +O(1) \right)  - 1 +O\left( \frac 1{\sqrt{x}}\right) \\
 &=    \log x +2\gamma -1 +O\left( \frac 1{\sqrt{x}}\right) ,
 \end{align*}
since $\sum_{n\leq N} 1/n = \log N+\gamma +O(1/N)$, an extraordinary improvement upon the earlier error term.

\subsection{Vaughan's identity} If we sum  \eqref{VMidentity} over all $n\leq x$ and trivially bound the contribution of the prime powers, then we obtain
\[
\Theta(x) = \sum_{ab\leq x} \mu(a)\log b +O(\sqrt{x}).
\]
If we fix $a$, and then sum over $b\leq B$, where $B=[x/a]$, we obtain $\mu(a) \log B!$ and we can approximate $B!$ very well using Stirling's formula. Hence the key difficulty in using this to approximate $\Theta(x)$ is to understand the sum  of $\mu(a)$ times a smoothish function, for all integers $a\leq x$.  We already discussed this a little in section \ref{pntMob} and the problem remains that $\mu(.)$ is not an easy function to sum.

It is not difficult to find more complicated generalizations of our identity  $\Lambda=\mu*\log$, but to what end? Vinogradov made the extraordinary observation that, in certain ranges, it is possible to give good bounds on such convolutions, ignoring the precise details of the arithmetic function involved but rather getting bounds in terms of certain simpler sums involving the absolute value of those functions. The key is the bilinear shape of the convolutions.   Vinogradov's strategy lies at the heart of all of the proofs in this area.

There are several suitable, more convoluted identities than \eqref{VMidentity}, in which  $\Lambda(n)$ is written as a linear combination of convolutions of arithmetic functions.  The simplest is Vaughan's identity \cite{vaughan}, and will suffice for our needs:
\begin{equation} \label{Vaughidentity} \emph{Vaughan's identity}: \qquad \Lambda_{\geq V} = \mu_{<U} * L  - \mu_{<U} * \Lambda_{< V} * 1 + \mu_{\geq U} * \Lambda_{\geq V} * 1
\end{equation}
where $g_{>W}(n)=g(n)$ if $n>W$ and $g(n)=0$ otherwise; and $g=g_{\leq W}+g_{>W}$.
To verify this identity, we manipulate the algebra of convolutions:
\begin{align*}
 \Lambda_{\geq V} & =  \Lambda- \Lambda_{< V} = (\mu*L)- \Lambda_{< V}*(1*\mu) \\
 &= \mu_{<U}*L+ \mu_{\geq U} *L - \mu_{<U}*\Lambda_{< V}*1 - \mu_{\geq U}*\Lambda_{< V}*1\\
  &= \mu_{<U}*L - \mu_{<U}*\Lambda_{< V}*1 + \mu_{\geq U}*(\Lambda*1-\Lambda_{< V}*1) ,
\end{align*}
 from which we immediately deduce \eqref{Vaughidentity}.

The following identity,   due to Heath-Brown \cite{hb-ident}, is used in \cite{polymath8} to get the strongest form of Zhang's theorem: If $n\leq U^k$ then
\[
\Lambda(n)=-\sum_{j=1}^k (-1)^j \binom kj \mu_{\leq U} * \mu_{\leq U} *\ldots * \mu_{\leq U} * \log
*1*\ldots*1 (n)
\]
where $\mu_{\leq U}$ is convolved $k$ times, and $1$ is convolved $k-1$ times.  This larger number of terms allows us to group divisors in such a way that we have closer control over their sizes.

\section{Distribution in arithmetic progressions} \label{GeneralBV}

\subsection{General sequences in arithmetic progressions} \label{genseqs}
One can ask whether \emph{any} given sequence $(\beta(n))_{n\geq 1}\in \Bbb C$ is well-distributed in arithmetic progressions modulo $q$. We begin by formulating an appropriate analogy to \eqref{SW1}, which should imply non-trivial estimates in the range $q\leq (\log x)^A$ for any fixed $A>0$: We say that $\beta$ satisfies a \emph{Siegel-Walfisz condition} if, for any fixed $A>0$, and whenever $(a,q)=1$, we have
$$
\left| \sum_{\substack{ n\leq x\\ n\equiv a \pmod q}} \beta(n) - \frac
{1}{\phi(q)}\sum_{\substack{  n\leq x\\ (n,q) =1}} \beta(n) \right| \ll_A
   \ \frac{\|\beta\| x^\frac {1}{2}} {(\log x)^A} \ ,
$$
with $\| \beta\| = \| \beta\|_2$ where, as usual,
$$
\|\beta \|_2 :=  \left( \sum_{n\leq x} |\beta(n)|^2\right)^{\frac 12}.
$$
Using  Cauchy's inequality one can show that this assumption  is ``non-trivial'' only for $q<(\log x)^{2A}$; that is, when $x$ is very large compared to $q$.

Using the large sieve, Bombieri, Friedlander and Iwaniec \cite{bfi}  proved two  results that are surprising strong, given the weakness of the hypotheses. In the first they showed that if $\beta$ satisfies a Siegel-Walfisz condition,\footnote{Their condition appears to be weaker than that assumed here, but it can be shown to be equivalent.} then it is well-distributed  for \emph{almost all} arithmetic progressions $a \pmod q$, for \emph{almost all} $q \leq x/(\log x)^B$:

\begin{theorem}  Suppose that the sequence of complex numbers
$\beta(n), n\leq x$ satisfies a Siegel-Walfisz condition. For any
$A>0$ there exists $B=B(A)>0$ such that
$$
\sum_{q\leq Q} \ \sum_{a:\ (a,q)=1} \ \left| \sum_{\substack{ n\equiv a \pmod
q }} \beta(n) - \frac {1}{\phi(q)} \sum_{\substack{ (n,q)=1 }}
\beta(n)\right|^2 \ll \| \beta\|^2 \frac{x} {(\log x)^A}
$$
where $Q=x/(\log x)^B$.
\end{theorem}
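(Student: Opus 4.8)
The plan is to run the classical large-sieve argument for sequences in arithmetic progressions: expand the error terms in Dirichlet characters, reduce to primitive characters, and then play the Siegel--Walfisz hypothesis off against the multiplicative large sieve inequality, splitting the conductors at a power of $\log x$. Write $E(x;q,a):=\sum_{n\le x,\ n\equiv a\,(q)}\beta(n)-\frac1{\phi(q)}\sum_{(n,q)=1}\beta(n)$ for the error term. First I would dualize: expanding the indicator of $n\equiv a\pmod q$ in characters and applying Parseval on the group of residues coprime to $q$ gives, for each fixed $q$,
\[
\sum_{a:(a,q)=1}\bigl|E(x;q,a)\bigr|^2=\frac1{\phi(q)}\sum_{\chi\ne\chi_0\ (q)}\Bigl|\sum_{n\le x}\beta(n)\chi(n)\Bigr|^2,
\]
the principal character accounting exactly for the subtracted main term, so that the quantity to be bounded is $\sum_{q\le Q}\frac1{\phi(q)}\sum_{\chi\ne\chi_0\ (q)}|\sum_n\beta(n)\chi(n)|^2$.

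Next I would reduce to primitive characters. Each non-principal $\chi\pmod q$ is induced by a unique primitive $\chi^{*}$ of conductor $q^{*}\mid q$, $q^{*}>1$, with $\sum_n\beta(n)\chi(n)=\sum_{(n,q)=1}\beta(n)\chi^{*}(n)$; the discrepancy between the conditions $(n,q)=1$ and $(n,q^{*})=1$ is removed by a M\"obius expansion over the primes dividing $q$ but not $q^{*}$, a routine step costing only $(\log x)^{O(1)}$ on average. Using $\phi(q)\ge\phi(q^{*})\phi(q/q^{*})$ together with $\sum_{d\le Q/q^{*}}1/\phi(d)\ll\log Q$, the sum is
\[
\ll(\log x)^{O(1)}\sum_{q^{*}\le Q}\frac1{\phi(q^{*})}\ \sum_{\chi}^{*}\ \Bigl|\sum_{n\le x}\beta(n)\chi(n)\Bigr|^2,
\]
where the $*$ restricts to primitive characters modulo $q^{*}$.

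Now I would fix a threshold $P=(\log x)^{C}$ and split. For $q^{*}\le P$, the Siegel--Walfisz hypothesis with a large exponent $A'$ gives $|E(x;q^{*},a)|\ll\|\beta\|\,x^{1/2}/(\log x)^{A'}$ for every $a$ coprime to $q^{*}$; feeding this into the identity of the first step with modulus $q^{*}$ bounds $\sum_{\chi}^{*}|\sum_n\beta(n)\chi(n)|^2$ by $\phi(q^{*})^{2}\|\beta\|^{2}x/(\log x)^{2A'}$, so this whole range contributes $\ll P^{2}(\log x)^{O(1)}\|\beta\|^{2}x/(\log x)^{2A'}$, which is $\ll\|\beta\|^{2}x/(\log x)^{A}$ once $A'$ is large in terms of $A$ and $C$. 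For $q^{*}>P$, decompose dyadically $q^{*}\sim Q'$ and invoke the multiplicative large sieve inequality $\sum_{q\sim Q'}\frac q{\phi(q)}\sum_{\chi}^{*}|\sum_{n\le x}\beta(n)\chi(n)|^2\ll(Q'^{2}+x)\|\beta\|^{2}$; since $1/\phi(q^{*})\le(q^{*}/\phi(q^{*}))/Q'$ on the block, it contributes $\ll(\log x)^{O(1)}(Q'+x/Q')\|\beta\|^{2}$. Summing the first term over dyadic $Q'\le Q$ gives $\ll Q(\log x)^{O(1)}\|\beta\|^{2}=x(\log x)^{O(1)-B}\|\beta\|^{2}$, and summing the second over $Q'>P$ gives $\ll(x/P)(\log x)^{O(1)}\|\beta\|^{2}$; both are $\ll\|\beta\|^{2}x/(\log x)^{A}$ provided $B$ and $C$ are large enough in terms of $A$. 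Choosing $B$, then $C$, then $A'$ in that order yields the theorem.

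The only genuinely non-elementary ingredient is the multiplicative large sieve inequality (which itself follows from the additive large sieve); granted that, everything is bookkeeping, and what is striking is how weak a hypothesis suffices. I expect the main delicacy to be the calibration of the three parameters: the Siegel--Walfisz estimate must cover every conductor up to $P=(\log x)^{C}$, the large sieve must handle everything above, the leading dyadic term $Q(\log x)^{O(1)}$ must be pushed below $x/(\log x)^{A}$ by taking $B$ large, the tail $\sum_{Q'>P}x/Q'$ must be controlled by taking $C$ large, and only afterwards can $A'$ be chosen large enough (in terms of $A$ and $C$) to kill the small-conductor term. The coprimality reduction in the second step is the other technicality, but it is standard and lossless up to powers of $\log x$.
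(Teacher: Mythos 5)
The paper does not prove this theorem; it cites it to Bombieri--Friedlander--Iwaniec \cite{bfi} and moves on, so there is no in-house proof to compare against. Your outline is the classical Barban--Davenport--Halberstam strategy (Parseval over residues, reduction to primitive characters, Siegel--Walfisz for small conductors, multiplicative large sieve for large conductors, dyadic bookkeeping), which is indeed the right skeleton, and your calibration of $B$, $C$, $A'$ at the end is in order.

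However, the step you label ``routine'' is in fact where all the difficulty of this theorem for \emph{general} sequences lives, and as written it has a gap. When $\chi \pmod q$ is induced by $\chi^{*} \pmod{q^{*}}$ one has
$\sum_{n}\beta(n)\chi(n)=\sum_{(n,q)=1}\beta(n)\chi^{*}(n)$, and you propose to pass from this to $\sum_{n\le x}\beta(n)\chi^{*}(n)$ by M\"obius-inverting the extra coprimality condition at a cost of $(\log x)^{O(1)}$. For $\Lambda$ this is trivial (the discrepancy is $O(\log^{2}x)$ because $\Lambda$ is supported on prime powers), but for general $\beta$ the M\"obius expansion produces the sums $T_{d}^{*}(\chi^{*}):=\sum_{d\mid n}\beta(n)\chi^{*}(n)$ for squarefree $d$ coprime to $q^{*}$, and these are \emph{not} controlled by the Siegel--Walfisz hypothesis as stated: that hypothesis speaks only to residues $a$ with $(a,q)=1$, whereas the residues arising from $d\mid n$ satisfy $(a,dq^{*})=d\ne 1$. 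In the small-conductor range this really bites: the main term in $T_{d}^{*}$ vanishes since $\sum_{b}\chi^{*}(b)=0$, but the only bounds available from the hypothesis yield an error of size $\phi(q^{*})\,d\,\|\beta\|x^{1/2}/(\log x)^{A'}$, and neither this (for $d$ small) nor the trivial bound $(x/d)^{1/2}\|\beta_{d}\|$ (for $d$ large) sums acceptably over $d$; one ends up with terms like $\sum_{d}x\|\beta_{d}\|^{2}/d\gg x\|\beta\|^{2}$, which is too large by a full power of $\log x$ times $P$. The intermediate display in your write-up, which asserts
$\sum_{q\le Q}\frac 1{\phi(q)}\sum_{\chi\ne\chi_{0}}|\cdot|^{2}\ll(\log x)^{O(1)}\sum_{q^{*}\le Q}\frac 1{\phi(q^{*})}\sideset{}{^{*}}\sum_{\chi}|\sum_{n}\beta(n)\chi(n)|^{2}$,
therefore does not follow from the stated hypothesis by a bare M\"obius/Cauchy--Schwarz manoeuvre. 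The standard way to make this precise is to assume (or first derive) a Siegel--Walfisz condition that carries an extra coprimality parameter, i.e.\ a bound for $\sum_{n\equiv a\,(q),\,(n,d)=1}\beta(n)$ uniform in $d$ up to $\tau(d)^{O(1)}$; this is essentially what Bombieri--Friedlander--Iwaniec assume, and the footnote in this paper claiming equivalence with the simpler condition is precisely the non-trivial fact your sketch is implicitly invoking. So: correct architecture, correct small/large split, correct parameter ordering, but the imprimitivity reduction needs to be reworked to make the hypothesis actually bite on the $d>1$ terms.
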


The analogous result for $\Lambda(n)$ is known as the \emph{Barban-Davenport-Halberstam theorem} and in that special case one can even obtain an asymptotic.

Before proceeding, let us assume, for the rest of this article, that  we are given two sequences of complex numbers as follows:
\begin{itemize}
\item {$\alpha(m),\ M<m\leq 2M$ and $ \beta(n),\ N<n\leq 2N$, with $x^{1/3}<N\leq M\leq x^{2/3}$ and $MN\leq x$.}

\item {$\beta(n)$ satisfies the Siegel-Walfisz condition.}

\item {$\alpha(m)\ll \tau(m)^A(\log x)^B$ and $\beta(n)\ll \tau(n)^A(\log x)^B$ (these inequalities are satisfied by $\mu, 1, \Lambda, L$ and any convolutions of these sequences).}
\end{itemize}

In their second result, Bombieri, Friedlander and Iwaniec, showed  that rather general convolutions are well-distributed\footnote{This possibility has its roots in  papers of Gallagher \cite{gall} and of Motohashi \cite{mot}.}  for \emph{all} arithmetic progressions $a \pmod q$, for \emph{almost all} $q \leq x^{1/2}/(\log x)^B$.

\begin{theorem} \label{BFI2} Suppose that    $\alpha(m)$ and $\beta(n)$ are as above. For any $A>0$
there exists $B=B(A)>0$ such that
$$
\sum_{q\leq Q} \ \max_{a:\ (a,q)=1} \ \left| \sum_{\substack{ n\equiv a \pmod
q }} (\alpha*\beta)(n) - \frac {1}{\phi(q)} \sum_{\substack{ (n,q)=1 }}
(\alpha*\beta)(n) \right| \ll \| \alpha\| \| \beta\| \frac{x^{1/2}} {(\log x)^A}
$$
where $Q=x^{1/2}/(\log x)^B$. Here we allow any $M$ and $N$ for which $MN\leq x$ and   $x^{\epsilon}\ll M,N \ll   x^{1-\epsilon}$
\end{theorem}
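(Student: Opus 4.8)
The plan is to prove this by Linnik's dispersion method. The direct route through Dirichlet characters --- writing $\Delta(q,a):=\sum_{n\equiv a\,(q)}(\alpha*\beta)(n)-\frac{1}{\phi(q)}\sum_{(n,q)=1}(\alpha*\beta)(n)=\frac{1}{\phi(q)}\sum_{\chi\ne\chi_0}\bar\chi(a)A(\chi)B(\chi)$, with $A(\chi)=\sum_m\alpha(m)\chi(m)$ and $B(\chi)=\sum_n\beta(n)\chi(n)$, and bounding $\sum_{q\le Q}\max_a|\Delta(q,a)|$ by Cauchy--Schwarz in $\chi$ and the large sieve $\sum_{q\le Q}\sum_{\chi\bmod q}|A(\chi)|^2\ll(Q^2+M)\|\alpha\|^2$ --- works only when $M$ and $N$ are at most a fixed power of $\log x$; once $M$ is a genuine power of $x$ the resulting $\sqrt{(Q^2+M)(Q^2+N)}$ overshoots the target $x^{1/2}/(\log x)^A$, so one must keep $\alpha$ and $\beta$ separated in physical space rather than collapsing their product into $A(\chi)B(\chi)$.

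First I would carry out the standard reductions: for each $q\le Q$ choose $a_q$ coprime to $q$ realising $\max_a|\Delta(q,a)|$ and a unimodular $\xi_q$ with $\max_a|\Delta(q,a)|=\xi_q\Delta(q,a_q)$. Opening $(\alpha*\beta)(\ell)=\sum_{mn=\ell}\alpha(m)\beta(n)$ and noting that $mn\equiv a_q\pmod q$ with $(a_q,q)=1$ forces $(m,q)=1$ and $n\equiv a_q\overline m\pmod q$, one rewrites $\sum_{q\le Q}\xi_q\Delta(q,a_q)=\sum_{M<m\le 2M}\alpha(m)\,\Xi(m)$, where $\Xi(m):=\sum_{q\le Q,\,(q,m)=1}\xi_q\big(\sum_{n\equiv a_q\overline m\,(q)}\beta(n)-\frac{1}{\phi(q)}\sum_{(n,q)=1}\beta(n)\big)$. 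Cauchy--Schwarz in $m$ then extracts $\|\alpha\|$, and it remains to prove $\sum_{M<m\le 2M}|\Xi(m)|^2\ll\|\beta\|^2\,x\,(\log x)^{-2A}$, with $B=B(A)$ taken suitably large.

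Expanding this square produces $\sum_{q_1,q_2\le Q}\xi_{q_1}\overline{\xi_{q_2}}\big(T(q_1,q_2)-U(q_1,q_2)-\overline{U(q_2,q_1)}+V(q_1,q_2)\big)$, where $T(q_1,q_2)=\sum_{(m,q_1q_2)=1}\sum_{n_1\equiv a_{q_1}\overline m\,(q_1)}\sum_{n_2\equiv a_{q_2}\overline m\,(q_2)}\beta(n_1)\overline{\beta(n_2)}$ is the honest double count and $U,V$ are the partly- and fully-averaged versions. The dispersion step proper is to evaluate $T$ by interchanging the $m$- and $n$-sums: for fixed $n_1,n_2$ the two congruences on $m$ are simultaneously solvable, by the Chinese Remainder Theorem, precisely when $n_1\equiv n_2\pmod{(q_1,q_2)}$, and then $m$ runs through a single residue class modulo $[q_1,q_2]$, so the inner count equals $M/[q_1,q_2]$ plus an error. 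The main parts of $T$, $U$ and $V$ are designed to cancel, up to contributions absorbed by the Siegel--Walfisz hypothesis on $\beta$ and by elementary divisor estimates, and an off-diagonal remainder is left over.

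Bounding that off-diagonal remainder uniformly in $q_1$ and $q_2$ is the heart of the matter, and the step I expect to be the main obstacle. I would split the pairs by the size of $d:=[q_1,q_2]$. When $d$ is small --- at most a fixed power of $\log x$, which is exactly where $M/d$ is genuinely the leading term --- the Siegel--Walfisz condition gives $\big|\sum_{n\equiv b\,(r)}\beta(n)-\frac{1}{\phi(r)}\sum_{(n,r)=1}\beta(n)\big|\ll_A\|\beta\|\,N^{1/2}(\log x)^{-A}$ for every fixed $A$, and summing this trivially over the few moduli involved still beats the target; this is exactly where the hypothesis on $\beta$ is indispensable. When $d$ is large one replaces the interval count $\#\{m\in(M,2M]:m\equiv c\,(d)\}$ by $M/d$ plus incomplete additive-character sums $\sum_{h\ne0}c_h\,e(hc/d)$ with rapidly decaying $c_h$, turning the error into such character sums; splitting $e(hc/d)$ through the Chinese Remainder Theorem produces Kloosterman-type sums $\sum_n\beta(n)\,e(h\,\overline n/r)$ in the $n$-variables, which one bounds by the Weil estimate after completion and then sums against $\beta(n_1)\overline{\beta(n_2)}$ and over $q_1,q_2,h$ by the large sieve. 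Since we are not attempting to cross the $x^{1/2}$-barrier, the Weil bound for individual Kloosterman sums together with the large sieve suffices --- no averaging of Kloosterman sums (spectral theory of automorphic forms, in the style of Deshouillers--Iwaniec) is needed --- and the constraints $MN\le x$ and $Q\le x^{1/2}/(\log x)^B$ are precisely what keep the bookkeeping closed, with $B=B(A)$ chosen large enough to swallow the $(\log x)^{O(1)}$ losses from the Fourier truncations and the divisor bounds on $\alpha$ and $\beta$. A symmetric variant, Cauchy--Schwarzing in the $n$-variable, covers the range $N>M$; the genuinely delicate points in a complete write-up are the exact matching of the main terms in $T-U-\overline U+V$ and the uniformity of the Weil/large-sieve estimates when $d>M$, where $M/d<1$.
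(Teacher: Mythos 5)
The paper does not prove this theorem: it is stated and attributed to Bombieri--Friedlander--Iwaniec \cite{bfi}, with a footnote tracing the idea to Gallagher \cite{gall} and Motohashi \cite{mot}, so there is no internal proof to compare your proposal against. A few comments are nonetheless in order.

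Your opening dismissal of the character/large-sieve route is based on a miscalculation. One does not bound the sum by $\sqrt{(Q^2+M)(Q^2+N)}$; rather, one first reduces to primitive characters, decomposes the conductor range dyadically, and keeps the $1/\phi(q)$ weight. For conductors $q\asymp Q_1$ the large sieve gives
\[
\frac{1}{Q_1}\,\bigl(Q_1^2+M\bigr)^{1/2}\bigl(Q_1^2+N\bigr)^{1/2}\ \ll\ Q_1+\sqrt{M}+\sqrt{N}+\frac{\sqrt{MN}}{Q_1},
\]
and after summing over dyadic $Q_1\leq Q=x^{1/2}/(\log x)^B$ every term is acceptable once the small-conductor range (where $\sqrt{MN}/Q_1$ is dangerous) is handled via the Siegel--Walfisz hypothesis on $\beta$, together with $M,N\ll x^{1-\epsilon}$ to control $\sqrt M$ and $\sqrt N$. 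This is the standard proof and, in essence, the proof of \cite{bfi} for this range. The dispersion/Kloosterman machinery is what is needed to break the $\sqrt{x}$ barrier (as in Theorem \ref{BVdyadicrange}), not for $Q\leq x^{1/2}/(\log x)^B$.

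Your dispersion sketch also has a concrete gap. Because the theorem takes a maximum over $a$, the residue $a_q$ varies with $q$, and after interchanging the $m$- and $n$-sums the CRT compatibility condition for $m$ is $a_{q_1}\overline{n_1}\equiv a_{q_2}\overline{n_2}\pmod{(q_1,q_2)}$, not $n_1\equiv n_2\pmod{(q_1,q_2)}$ as you wrote. With $q$-dependent residues the matching of main terms in $T-U-\overline U+V$ is considerably more delicate than your outline suggests; this is precisely why the dispersion method is more comfortably deployed for a single fixed residue $a$ (as in Zhang's Theorem \ref{Zhangthm} and in Section~\ref{prelim-red}) rather than for the $\max_a$ form of the statement.
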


The ranges for $M$ and $N$ are quite restricted in the result, as we have stated it, though the proof is valid in the wider range, $N\geq \exp((\log x)^\epsilon)$ and $M\geq  (\log x)^{2B+4}$. However this is still not a wide enough range to deduce the Bombieri-Vinogradov theorem for primes from \eqref{VMidentity}, but it is wide enough if one uses Vaughan's identity \eqref{Vaughidentity}. This proof of the Bombieri-Vinogradov theorem  follows the pattern laid down by Vinogradov, in  that it seems to be less dependent on   specific properties of the primes.

The restriction on the moduli staying below $x^{1/2}$ has been a major barrier to the development of analytic number theory .\footnote{There had been some partial progress with moduli $>x^{1/2}$, as in \cite{bfi-2}, but no upper bounds that ``win'' by an arbitrary power of $\log x$ (which is what is essential to many applications).}

In the Bombieri-Vinogradov Theorem one bounds the largest error term in the prime number theorem as one varies over all
arithmetic progressions mod $q$, averaging over $q\leq Q$.  In some applications (such as the one here), 
it suffices to bound the error term in the prime number theorem for the arithmetic progression $a \pmod q$, 
averaging over $q$, for one given integer $a$,
as long as we obtain the same bound for all integers $a$.

Bombieri, Friedlander and Iwaniec \cite{bfi} made the following conjecture.\footnote{They actually conjectured that one can take $Q=x/(\log x)^B$.  They also conjectured that if one assumes  the Siegel-Walfisz condition with $\|\beta\|_s N^{1-\frac {1}{s}}$ in place of $\|\beta\| N^\frac {1}{2}$ then we may replace $ \| \alpha\| \| \beta\|  x^{1/2}$ in the upper bound here by $\| \alpha\| M^{1-\frac {1}{r}} \| \beta\| N^{1-\frac 1s}$.}

\begin{conjecture} [Generalized Elliott-Halberstam Conjecture] Suppose that    $\alpha(m)$ and $\beta(n)$ are as above. For any $A,\epsilon>0$,   we have
$$
\sum_{\substack{q\leq Q \\ (q,a)=1 }} \  \left| \sum_{\substack{ n\equiv a \pmod
q }} (\alpha*\beta)(n) - \frac {1}{\phi(q)} \sum_{\substack{ (n,q)=1 }}
(\alpha*\beta)(n) \right| \ll \| \alpha\| \| \beta\| \frac{x^{1/2}} {(\log x)^A}
$$
where $Q=x^{1-\epsilon}$, for every integer $a$.
\end{conjecture}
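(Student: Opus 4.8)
Strictly speaking this is a conjecture and no proof is known, so what follows is a plan of attack that succeeds in the ranges where such statements \emph{have} been established (the Bombieri--Vinogradov theorem, Theorem~\ref{BFI2}, and Zhang's Theorem~\ref{Zhangthm}) together with an indication of where it breaks down in general. Since $\alpha\ast\beta$ is already presented as a bilinear convolution, no preliminary combinatorial identity such as Vaughan's \eqref{Vaughidentity} is needed; one works directly with the shape $\sum_{mn\equiv a\,(q)}\alpha(m)\beta(n)$. The first step is to dispose of the moduli $q\leq x^{1/2}/(\log x)^B$ by quoting Theorem~\ref{BFI2} (or the large-sieve input behind it), so that the entire difficulty is concentrated in the range $x^{1/2}\leq q\leq x^{1-\epsilon}$.

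For these large moduli the strategy is Linnik's \emph{dispersion method}. One removes the absolute values by inserting unimodular coefficients $c_q$, opens the expression, and applies Cauchy--Schwarz in the $m$-variable so as to eliminate the unknown function $\alpha$ at the cost of squaring; the surviving sum involves $\beta(n_1)\overline{\beta(n_2)}$ with $n_1,n_2$ constrained modulo two moduli $q_1,q_2$. Expanding the square produces an expected main term plus a ``dispersion'' which, after completing the $n$-sums by Poisson summation, is governed by sums of Kloosterman fractions to modulus $[q_1,q_2]$. One then uses the factorisation of this modulus to separate variables, opens the Kloosterman sums, and extracts a saving over the trivial bound either from Weil's estimate for individual Kloosterman sums or, more powerfully, from the Deshouillers--Iwaniec spectral bounds for sums of Kloosterman sums; summing the resulting savings over $q\leq Q$ should then beat $x^{1/2}/(\log x)^A$.

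The hard part --- and the reason the conjecture remains open --- is precisely the completion and factorisation step. When $q$ is close to $x^{1/2}$ there is room to spare, the completed exponential sums are no longer than the original ranges of summation, and the argument loses nothing essential; this is in effect how Theorem~\ref{BFI2} is proved. But as $q$ grows towards $x^{1-\epsilon}$ the dual sums become longer than the available ranges, completion is wasteful, and one is forced to exhibit genuine cancellation in \emph{incomplete} Kloosterman-type sums to arbitrary moduli --- something present technology cannot supply. Zhang's Theorem~\ref{Zhangthm} shows the way out under two extra hypotheses: restricting to a single residue class $a$ and to \emph{smooth} squarefree $q$, one can write $q=dr$ with $d$ and $r$ of any prescribed sizes, and this flexibility lets one keep the exponential sums balanced and complete the argument (with a true power-of-$\log$ saving) even slightly past $x^{1/2}$. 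Without smoothness there is no analogue of this factorisation, so a genuinely new idea would be needed to reach general moduli up to $x^{1-\epsilon}$.
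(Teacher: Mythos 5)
You correctly identify that this statement is an open conjecture (due to Bombieri, Friedlander and Iwaniec), not a theorem, and the paper itself offers no proof of it --- it is stated precisely so that its consequences (e.g.\ bounded gaps of size $6$ via Proposition~\ref{TechProp} for $3$-tuples) can be recorded. Your survey of what \emph{is} known is accurate and consistent with the paper: Theorem~\ref{BFI2} handles $Q$ up to $x^{1/2}/(\log x)^B$ via the large sieve; the dispersion method plus Cauchy--Schwarz plus completion to Kloosterman sums (Weil or Deshouillers--Iwaniec) is indeed the engine behind both that result and Zhang's Theorem~\ref{BVdyadicrange}; and the obstruction you name --- once $q$ passes $x^{1/2}$, completion becomes wasteful and one needs a factorisation $q=dr$ of controllable sizes, available for smooth moduli but not in general --- is exactly the barrier the paper emphasises. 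There is nothing to compare against since the paper supplies no proof, and your answer would be improved only by stating even more bluntly at the outset that a ``proof'' cannot be produced and that what follows is a status report, not an argument.
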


It is shown, in \cite{polymath8b} (by further developing the ideas described in this article), that if we assume this Generalized Elliott-Halberstam conjecture then for every admissible $3$-tuple of linear forms, there are infinitely many integers $n$ such that there are at least two primes amongst the three linear forms when evaluated at $n$. In particular, from the admissible forms $\{ x, x+2, x+6\}$ we deduce that there are infinitely many pairs of distinct primes that differ by no more than $6$.

The extraordinary work of Zhang breaks through the $\sqrt{x}$ barrier in some generality, working with moduli  slightly larger than $x^{1/2}$, though   his   moduli are $y$-smooth, with $y=x^\delta$.
The key result is as follows:

\begin{theorem} \label{BVdyadicrange}   Suppose that    $\alpha(m)$ and $\beta(n)$ are as above. There exist  constants $\eta, \delta>0$ such that, for any $A>0$,
$$
\sum_{\substack{q\leq Q \\  P(q)\leq x^\delta \\ (q,a)=1 \\ q \text{ squarefree}}} \ \left| \sum_{\substack{ n\leq x \\ n\equiv a \pmod
q }} (\alpha*\beta)(n) - \frac {1}{\phi(q)} \sum_{\substack{ n\leq x \\ (n,q)=1 }}
(\alpha*\beta)(n) \right| \ll_A \| \alpha\| \| \beta\| \frac{x^{1/2}} {(\log x)^A}
$$
where $Q=x^{1/2+\eta}$, for any integer $a$.
\end{theorem}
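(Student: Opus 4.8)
The plan is to prove Theorem~\ref{BVdyadicrange} by Linnik's \emph{dispersion method}, in the form pioneered by Bombieri--Friedlander--Iwaniec and refined by Zhang. First I would remove the absolute values by choosing complex numbers $c_q$ with $|c_q|\le 1$, so that the left-hand side becomes
\[
\sum_{\substack{q\le Q,\ (q,a)=1\\ P(q)\le x^\delta,\ q\text{ squarefree}}} c_q\Bigl(\sum_{\substack{n\le x\\ n\equiv a\,(q)}}(\alpha*\beta)(n)-\frac{1}{\phi(q)}\sum_{\substack{n\le x\\ (n,q)=1}}(\alpha*\beta)(n)\Bigr).
\]
Writing $(\alpha*\beta)(n)=\sum_{m}\alpha(m)\beta(n/m)$ and moving the sum over $M<m\le 2M$ to the outside, Cauchy--Schwarz in $m$ replaces $\alpha$ by $\|\alpha\|_2$ and reduces matters to bounding
\[
\mathcal S:=\sum_{M<m\le 2M}\Bigl|\sum_{q}c_q\Bigl(\sum_{\substack{N<n\le 2N\\ mn\equiv a\,(q)}}\beta(n)-\frac{1}{\phi(q)}\sum_{\substack{N<n\le 2N\\ (mn,q)=1}}\beta(n)\Bigr)\Bigr|^2.
\]
Expanding the square produces a sum over pairs of moduli $q,q'$ and, once the $\beta$-sums are opened, over pairs $n_1,n_2$; the innermost sum over $m$ then counts integers of a dyadic interval lying in prescribed residue classes modulo $[q,q']$. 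The crucial feature is that the three resulting contributions (main times main, main times error, and error times error) each produce a main term of comparable size, and these cancel, leaving only error terms which, after estimating the count of $m$, are incomplete exponential sums twisted by $\beta(n_1)\overline{\beta(n_2)}$.

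Second, I would exploit that every modulus $q$ here is $x^\delta$-smooth: peeling off prime factors one at a time, we may factor $q=q_1q_2$ with $q_1$ as close as desired (to within a factor $x^\delta$) to any prescribed size. I would use this to split the modulus of the relevant exponential sum into a piece that is ``completed'' via the finite Fourier transform (P\'olya--Vinogradov) and a short complementary piece, choosing the split to balance the various error contributions; this balancing is the source of the numerical constraint $162\eta+90\delta<1$. After completion the sums over $n_1,n_2$ collapse, and one is left with complete exponential sums modulo $q$, $q'$, their lcm and gcd, which after a change of variables become classical Kloosterman sums (and close relatives). Here I would invoke Weil's bound $|S(a,b;c)|\le\tau(c)(a,b,c)^{1/2}c^{1/2}$, together with the fact that the arguments of these Kloosterman sums genuinely vary with the remaining parameters, so that averaging them --- via one further Cauchy--Schwarz, or by again using smoothness of the moduli --- produces extra saving.

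Third, substituting the exponential-sum bounds back and summing trivially over $m$, $n_1$, $n_2$ and the moduli, controlling the divisor sums by $\beta(n)\ll\tau(n)^A(\log x)^B$ and Mertens-type estimates, should give a bound of the shape $\|\alpha\|_2^2\|\beta\|_2^2\cdot x/(\log x)^{A}$ whenever $\eta,\delta$ are sufficiently small; taking square roots yields the theorem. For very small moduli (say $q\le(\log x)^{C}$) the dispersion method is too lossy, and there the claim follows instead from the Siegel--Walfisz hypothesis on $\beta$, exactly as in the classical deduction of the Bombieri--Vinogradov theorem from Vaughan's identity.

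The main obstacle --- and the true novelty of Zhang's contribution --- is the treatment of the complete exponential sums. For the binary convolution $\alpha*\beta$ treated here, the relevant sums are of Type~I/II and Weil's bound for Kloosterman sums suffices; but the same scheme applied to \emph{ternary} convolutions $\alpha*\beta*\gamma$ (which one needs, via Heath-Brown's identity, to recover the full strength of Theorem~\ref{Zhangthm}) forces one to bound a genuinely three-dimensional algebraic exponential sum, the Type~III sum. Estimating this with enough cancellation requires either a direct appeal to Deligne's Riemann Hypothesis over finite fields (phrased through the formalism of Kloosterman sheaves), or, following Polymath8, a more elaborate combination of Weil-type bounds that sidesteps the deepest input from algebraic geometry. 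Keeping track of all the accumulated error terms through the dispersion argument, so that the final saving really is an arbitrary power of $\log x$ in the stated range of $\eta$ and $\delta$, is the principal bookkeeping challenge.
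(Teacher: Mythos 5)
Your broad strategy matches the paper's: dispersion, Cauchy--Schwarz to strip the weights, Fourier completion of incomplete sums, Weil's bound, and exploiting the smoothness of the modulus. But two structural ingredients of the paper's proof are missing or misdescribed, and without them the numerics do not close.

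First, the paper does a \emph{preliminary factorization of the modulus} $d=qr$ before any Cauchy--Schwarz: since $d$ is $x^\delta$-smooth and has few prime factors, it can be split as $d=qr$ with $r\asymp N/x^\epsilon$ (and $q$ having no small prime factors). The first Cauchy--Schwarz is then taken over the pair $(r,m)$, so that $r$ is \emph{shared} in the expanded square while only $q$ doubles into $q_1,q_2$. This has two crucial consequences: it forces the constraint $n_1\equiv n_2\pmod r$, drastically thinning the set of $(n_1,n_2)$ pairs, and it makes the modulus of the inner $m$-count $r[q_1,q_2]$ rather than $[q_1r,q_2r]$. In your version, where you Cauchy only over $m$ with unfactored moduli $q,q'\asymp x^{1/2+\eta}$, the modulus $[q,q']$ of the $m$-count is larger by roughly a factor $R$, the Fourier dual variable $h$ then ranges over a correspondingly longer interval, and the resulting exponential sums cannot be saved on. The second Cauchy--Schwarz, which removes the $\beta(n_1)\overline{\beta(n_2)}$ weights, is also applied in the paper \emph{before} the exponential sums are completed, not afterwards as you suggest; doing it later leaves you with weighted incomplete sums that the Weil bound does not directly touch.

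Second, your description of what the smoothness of the modulus buys at the exponential-sum stage --- ``split the modulus into a piece that is completed via the finite Fourier transform and a short complementary piece'' --- is not the Graham--Ringrose ($q$-analogue of van der Corput) argument that the paper actually needs. Plain completion plus Weil only handles $N>x^{(2+\epsilon)/5}$; the paper explicitly notes this falls short of the required $N>x^{1/3}$. The Graham--Ringrose step instead factors $q=q_1q_2$, writes $e_q(f(n+kq_1))=e_{q_1}(f(n)/q_2)e_{q_2}(f(n+kq_1)/q_1)$, averages over shifts $k\le K=\lfloor N/q_1\rfloor$, Cauchy--Schwarzes in $n$, and lands on complete sums modulo the \emph{smaller} factor $q_2$ with a differenced phase $g_{k,k'}$; Weil applied there gives the bound $\bigl(q_1^{1/2}+q_2^{1/4}\bigr)\tau(q)^A(\log q)N^{1/2}$, which for $y$-smooth $q$ yields $(qy)^{1/6}N^{1/2}$, and it is this that produces the constraint $162\eta+90\delta<1$. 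Your sketch attributes that constraint to a balancing step that, as described, would not deliver the extra factor $q^{-1/12+o(1)}$ of saving.
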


One then can deduce the same result with the support for $\alpha$ and $\beta$ expanded to the wider range
\[
x^{1/3}<m, n \leq x^{2/3} \text{  with  } mn\leq x.
\]
One proves this by carefully dissecting this range up into  into  dyadic ranges (that is, of the form $ M<m\leq 2M$  and $N<n\leq 2N$) as well as possible, and then carefully accounting for any $(m,n)$ pairs missed.

\subsection{The deduction of the main theorem  for primes} We will bound each term that arises from Vaughan's identity, \eqref{Vaughidentity}, with $U=V=x^{1/3}$, rewritten as
\begin{equation*}
\Lambda=\Lambda_{< x^{1/3}} + \mu_{<x^{1/3}} * L  - (\mu*\Lambda)_{< x^{1/3}}*1_{\geq x^{2/3}}  - \mu_{<x^{1/3}}*\Lambda_{< x^{1/3}}*1_{<x^{2/3}}  + \mu_{\geq x^{1/3}} * \Lambda_{\geq x^{1/3}} * 1.
\end{equation*}
 The first term is acceptably small, simply by taking absolute values. For the second term we write
$(\mu_{<x^{1/3}} * L)(n)= \sum_{um=n,\ u<x^{1/3}}  \mu(u)  \log m$, to obtain the difference
\[
\sum_{\substack{ u<x^{1/3} \\ (u,q)=1}} \mu(u) \left( \sum_{\substack{ x/u<m\leq 2x/u \\ m\equiv a/u \pmod q}}  \log m  - \frac 1{\phi(q)} \sum_{\substack{ x/u<m\leq 2x/u \\ (m,q)=1}}  \log m \right)
\]
Writing $M=x/u$, the inner sum is the difference between the sum of $\log m$ in $(M,2M]$ over an arithmetic progression  $b\pmod q$ with $(b,q)=1$, minus the average of such sums.
Now if $n_-=[M/q]$ and $n_+=[2M/q]$, then, since $\log q[m/q]<\log m <\log q([m/q]+1)$,
such a sum is $>\sum_{n_- \leq n\leq n_+-1} \log qn$ and is $<\sum_{n_-+1 \leq n\leq n_+1} \log qn$.
The difference between these bounds in $\ll \log M$, and hence this is our bound on the term in parentheses. Summing over $u$ yields a bound that is acceptably small.

We deal  with the third term, by the same argument as just above, since we obtain an inner sum of 1, over the values of $m$ in an interval of an arithmetic progression; and then we obtain a bound that is acceptably small.

We are   left to work with two sums of  convolutions:
\[
\sum_{\substack{mn\asymp x \\ mn\equiv a \pmod q}}   (\mu_{<x^{1/3}} * \Lambda_{< x^{1/3}})(m) 1_{<x^{2/3}}(n) \ \text{      and      } \
\sum_{\substack{mn\asymp x \\ mn\equiv a \pmod q}}  (\Lambda_{\geq x^{1/3}} * 1)(m) \mu_{\geq x^{1/3}}(n) ,
\]
where $x^{1/3} \ll m,n \ll x^{2/3}$, and each convolution takes the form $\alpha(m)\beta(n)$ with  $\alpha(m)$ and $\beta(n)$  as above. The result then follows from Zhang's result as discussed at the end of the last subsection.

\subsection{Further reductions}  We reduce Theorem \ref{BVdyadicrange} further. The first observation is that we can restrict our moduli to those with $<C\log\log x$ prime factors, for some large $C>0$, since the moduli with more prime factors are rare and thus contribute little to the sum. Since the moduli are $y$-smooth, they can be factored as
$qr$ where $N/(yx^\epsilon)< r \leq N/x^\epsilon$; moreover as   the modulus does not have a lot of prime factors, one can select $q$ and $r$ so that the smallest prime factor of $q$, denoted $p(q)$, is $\geq D_0:=x^{\epsilon/\log\log x}$. Hence we may also now assume

\begin{itemize}
\item $r\in (R,2R]$ with  $P(r)\leq y$ with  $y:=x^\delta$.
\item $q\in (Q,2Q]$ with $D_0<p(q)\leq P(q) \leq y$.
\item $N/(yx^\epsilon)< R \leq N/x^\epsilon$ and $x^{1/2}/(\log x)^B<QR\leq x^{1/2+\eta}$
\end{itemize}
In \cite{polymath8}, some gains are made by working instead with the full set of moduli that have this kind of convenient factorization, rather than restrict attention just to those moduli which are $y$-smooth.

We begin by noting that
\[
\sum_{ n\equiv a \pmod {qr} } \gamma(n) - \frac{1}{\phi(qr)} \sum_{\substack{ (n,qr)=1 }} \gamma(n) =
\]
\[
\sum_{ n\equiv a \pmod {qr} } \gamma(n) - \frac{1}{\phi(q)} \sum_{\substack{ (n,q)=1  \\  n\equiv a \pmod r }} \gamma(n)  +  \frac{1}{\phi(q)}  \left( \sum_{\substack{ (n,q)=1  \\  n\equiv a \pmod r }} \gamma(n)   - \frac{1}{\phi(r)} \sum_{\substack{ (n,q)=1 \\ (n,r)=1 }} \gamma(n)  \right)
\]
with $\gamma=\alpha*\beta$. We sum the absolute value of these terms,  over the moduli $d\in [D,2D]$, factored into $qr$ as above.  Since $\beta(n)$ satisfies the Siegel-Walfisz criterion, we can deduce that $\beta(n)1_{(n,q)=1}$ also satisfies it, and therefore Theorem \ref{BFI2} is applicable for $\alpha(m)\ast \beta(n)1_{(n,q)=1}$; this allows us to bound the sum of the second terms here, suitably.  Hence it remains to prove
 \begin{equation}\label{straw-2}
\sum_{\substack{q\in [Q,2Q]  \\ D_0<p(q)\leq P(q) \leq y}} \ \
\sum_{\substack{ r\in [R,2R], \\ P(r)\leq y\\ qr \ \text{squarefree}}}
\left|\sum_\stacksum{ n  \equiv a\pmod r}{n \equiv b\pmod q} (\alpha \ast \beta)(n) - \sum_\stacksum{ n \equiv a\pmod r}{ n  \equiv b'\pmod q}  (\alpha \ast \beta)(n) \right| \\
 \ll_A \| \alpha\| \| \beta\| \frac{x^{1/2}} {(\log x)^A},
\end{equation}
 for any integers $a,b,b'$ with $p(abb')> y$.

\section{Removing the weights, and an unweighted arithmetic progression}\label{prelim-red}

At first sight it seems unlikely that one can estimate \eqref{straw-2} since it  
involves unspecified sequences $\alpha$ and $\beta$. However we will use the Cauchy-Schwarz 
inequality to obtain an upper bound which depends only on the  mean squares of $\alpha$ and $\beta$.  

\subsection{Removing the weights} In this section we use Cauchy's inequality to ``unfold'' \eqref{straw-2}, 
so as to remove the $\alpha$ and $\beta$ from the innermost sum. Surprisingly, this simple procedure
can be applied so as to avoid much loss. 

In the left-hand side of \eqref{straw-2} we replace the absolute value in the $(q,r)$ term by a complex number $c_{q,r}$ of absolute value $1$,  to obtain, after a little re-arranging:
$$
\sum_r \sum_m \alpha(m)
\left(\sum_q \sum_{n:\ mn  \equiv  a\pmod r} c_{q,r} \beta(n) (1_{mn \equiv b\pmod q} - 1_{mn  \equiv  b'\pmod q})\right) .
$$
By the Cauchy-Schwarz inequality the square of this is
\[
\leq \sum_r \sum_m |\alpha(m)|^2 \leq  R \| \alpha \|^2
\]
times
\begin{equation}\label{sq}
\sum_r \sum_{m} \left|  \sum_q \sum_{n:\ mn  \equiv  a\pmod r}  c_{q,r} \beta(n) (1_{mn  \equiv  b\pmod q} - 1_{mn  \equiv  b'\pmod q})\right|^2 .
\end{equation}
When we expand the square, we obtain the sum of four terms of the form
\begin{align}
& \pm \sum_r \sum_{m}   \sum_{q_1,q_2} \sum_\stacksum{n_1,n_2}{ mn_1 \equiv mn_2 \equiv a\pmod r}
 c_{q_1,r} \overline{c_{q_2,r}  } \beta(n_1) \overline{\beta(n_2)} 1_{mn_1  \equiv b_1\pmod {q_1}} 1_{mn_2  \equiv b_2\pmod {q_2}} \notag \\
 = & \pm \sum_r   \sum_{q_1,q_2} \sum_\stacksum{n_1,n_2}{ n_1 \equiv n_2  \pmod r}
 c_{q_1,r} \overline{c_{q_2,r}  } \beta(n_1) \overline{\beta(n_2)}  \cdot
 \sum_{m} 1_{\substack{ m  \equiv b_1/n_1\pmod {q_1} \\  m   \equiv b_2/n_2\pmod {q_2} \\ m  \equiv a/n_1\pmod {r}}} \label{TheUnfolded}
\end{align}
where we get ``$+$'' when $b_1=b_2=b$ or $b'$, and ``$-$'' otherwise, since $(mn,qr)=1$.

We have achieved our goal of having an unweighted innermost sum. Indeed, if it is non-zero,\footnote{If the congruences are incompatible, then this sum cannot possibly contain any integers, and so is $0$. Since $(r,q_1q_2)=1$ they are compatible unless $b_1/n_1\equiv b_2/n_2 \pmod {(q_1,q_2)}$. Note that this criterion is irrelevant if $(q_1,q_2)=1$.}
 then it is just  the number of integers in an interval of an arithmetic progression with common difference $r[q_1,q_2]$.

\subsection{The main terms} The number of integers in an interval of length $M$, from an arithmetic progression with common difference $r[q_1,q_2]$ is
\[ \frac M{r[q_1,q_2]}+O(1).\]
We study now the sum of the ``main terms'', the $M/r[q_1,q_2]$. Firstly, for the terms with $(q_1,q_2)=1$ the main terms sum to
\[
\pm \sum_r   \sum_{\substack{q_1,q_2\\ (q_1,q_2)=1} } \sum_\stacksum{n_1,n_2}{ n_1 \equiv n_2 \pmod r}
 c_{q_1,r} \overline{c_{q_2,r}  } \beta(n_1) \overline{\beta(n_2)}  \cdot \frac{M}{rq_1q_2} ,
\]
which is independent of the values of $b_1,b_2$ and hence cancel, when we sum over the four terms (and the two `$+$', and two `$-$', signs).  For the terms with $(q_1,q_2)\ne 1$ we have
$(q_1,q_2)\ge D_0$ (since the prime factors of the $q_i$ are all $\geq D_0$), and it is not difficult to show that these are $\ll x(\log x)^{O(1)}/RD_0$, which is acceptably small.

\subsection{The error terms and the advent of exponential sums} The ``$O(1)$''s in \eqref{TheUnfolded} can add up to a total that is far too large. One can show that in most of the terms of the sum, the common difference of the arithmetic progression  is larger than the length of the interval, so the correct count is either $0$ or $1$: It is hardly surprising that an error term of
``$O(1)$'' is too insensitive to help us.

To proceed, instead of approximating, we will give a \emph{precise} formula for the number of integers in an arithmetic progression in an interval, using a sum of exponentials. By the Chinese Remainder Theorem, we can rewrite our triple of congruence conditions
\[
m  \equiv b_1/n_1\pmod {q_1}, \  m   \equiv b_2/n_2\pmod {q_2}, \ m  \equiv a/n_1\pmod {r}
\]
as one,
\[
m\equiv m_0(n_1,n_2) \pmod {q}
\]
where $q=rg\ell_1\ell_2$,
when there is a solution, which happens if and only if $b_1/n_1  \equiv b_2/n_2\pmod g$, where
  $g=(q_1,q_2)$ and we now define $\ell_1=q_1/g,\ \ell_2=q_2/g$.

To identify whether $m$ is in a given interval $I$, we use Fourier analysis. We define $e(t):=e^{2i\pi t}$ for any real number $t$, and then $e_q(t)=e(t/q)$.
The \emph{discrete Fourier transform} is defined by
\[
\hat f(h):=\sum_{b \pmod q} f(b)e_q(hb),
\]
for any function $f$ of period $q$.   If $f$ is any such function and $I(.)$ is the characteristic function for the interval $(M,2M]$, then
\begin{equation}\label{Plancherel}
\sum_{m\in I} f(m)   =  \frac 1q\   \sum_{h \pmod q}  \hat I(h) \hat f(-h) ,
\end{equation}
is an example of Plancherel's formula. This has a ``main term'' at $h=0$ (which is the same as the main term we found above, in that special case). The coefficients $\hat I(h)$ are easily evaluated and bounded:
\[
 \hat I(h) = \sum_{m=M+1}^{2M}  e_q(hm)= e_q(2hM) \cdot \frac{ e_q(hM)-1 }{ e_q(h)-1 } .
\]
The numerator has absolute value $\leq 2$ and, using the Taylor expansion, the denominator has absolute value
$\asymp |h|/q$. Hence
\[
|\hat I(h) | \ll \min \{ M, q/|h| \} ,
\]
We apply \eqref{Plancherel} with   $f=\sum_i c_i 1_{m\equiv a_i\pmod q}$, take absolute values, and use our bounds for $|\hat I(h) |$, to obtain
 \begin{equation}\label{ExponExpan}
\left|\sum_{i } c_i \left( \sum_{\substack{m\asymp M \\ m\equiv a_i \pmod q}} 1 - \frac{ M }{q} \right) \right| \ll  \sum_{\substack{0\leq j\leq J \\ H_j:=2^jq/M}}  \frac 1{H_j} \sum_{1\leq |h|\leq H_j}\left|\sum_{i} c_i e_q(a_i h)\right|  .
\end{equation}

The error terms in \eqref{TheUnfolded} are bounded  by
\[
\sum_{r\asymp R}   \sum_{g\leq G} \sum_ {\substack{\ell_1,\ell_2\asymp Q/g \\ (\ell_1,\ell_2)=1}}
\left|   \sum_{\substack{n_1,n_2\asymp N \\ n_1 \equiv n_2  \pmod r \\ b_1/n_1    \equiv b_2/n_2 \pmod g}} \beta(n_1) \overline{\beta(n_2)}  \cdot
\left(  \sum_{\substack{m\asymp M \\ m\equiv m_0(n_1,n_2) \pmod{rg\ell_1\ell_2}} } 1- \frac{M}{rg\ell_1\ell_2}
\right)
 \right|
 \]
which, by \eqref{ExponExpan},  is
\[
\ll \sum_{r\asymp R}   \sum_{g\leq G} \sum_ {\substack{\ell_1,\ell_2\asymp Q/g \\ (\ell_1,\ell_2)=1}}
\sum_{\substack{0\leq j\leq J \\ H_j:=2^jG/g}}  \frac 1{H_j} \sum_{1\leq |h|\leq H_j}
\left|   \sum_{\substack{n_1,n_2\asymp N \\ n_1 \equiv n_2  \pmod r \\  n_2 \equiv (b_2/b_1)n_1 \pmod g}} \beta(n_1) \overline{\beta(n_2)}  e_{rg\ell_1\ell_2}(m_0(n_1,n_2)h)  \right| .
 \]
 We write $n_1=n,\ n_2=n+kr$, replace the $n_2$ variable with $k$, and define $m_k(n)=m_0(n_1,n_2)$.
To simplify matters shall proceed with $r,g,k$ and $j$ fixed, and then sum over these at the end,
so we are reduced to studying
\begin{equation}\label{ToBeBounded}
\sum_ {\substack{\ell_1,\ell_2\asymp L \\ (\ell_1,\ell_2)=1}}
   \frac 1{H} \sum_{1\leq |h|\leq H}
\left|   \sum_{\substack{n\asymp N  \\  (b_2-b_1)n \equiv   b_1 kr   \pmod g}} \beta(n) \overline{\beta(n+kr)}  e_{rg\ell_1\ell_2}(m_k(n)h)  \right|
\end{equation}
where $L=Q/g$.

\section{Linnik's dispersion method}

The proof of Zhang's Theorem, and indeed of all the results in the
literature of this type, use Linnik's dispersion method.  The idea is to express the fact that $n$
belongs to an arithmetic progression using Fourier analysis; summing up over $n$ gives us a main term
plus a sum of exponential sums, and then the challenge is to bound each of these exponential sums.

Often
the sums come with weights, and judicious use of Cauchying allows one to work with an unweighted, but
more complicated exponential sum. We will discuss bounds on exponential sums later in this section. These exponential sums are often \emph{Kloosterman sums}, which one needs to bound. Individual Kloosterman sums can often by suitably bounded by Weil's  or Deligne's Theorem.
However, sometimes one needs to get good bounds on averages of Kloosterman sums, a question that was brilliantly attacked by Deshouillers and Iwaniec \cite{DI83}, using the (difficult) spectral theory
of automorphic forms. Indeed all previous work, breaking the $\sqrt{x}$ barrier, such as
\cite{fi-2}, \cite{bfi}) uses these types of estimates. One of the remarkable aspects of Zhang's work is that he avoids these penible techniques, and the restrictions that come with them.

Zhang was able to use only existing bounds on Kloosterman sums  to prove his Theorem, though he does use the sophisticated estimate of Birch and Bombieri from the appendix of \cite{fi-3}. Polymath8 indicates how even this deeper result can be avoided, so that the proof can be given using only ``standard'' estimates, which is what we do here.  In order to get the strongest known version one does need to involve these more difficult estimates, though they have been now reproved in an arguably more transparent way (see \cite{polymath8, Kowalski}).

\subsection{Removing the weights again}  To remove the $\beta$ weights from \eqref{ToBeBounded},
we begin by replacing the absolute value in \eqref{ToBeBounded} by the appropriate complex number $c_{h,\ell_1,\ell_2}$ of absolute value $1$, and re-organize to obtain
\begin{equation} \label{PreCauchy}
 \sum_{\substack{n\asymp N \\ (b_2-b_1)n\equiv b_1kr \pmod g}} \  \beta(n) \overline{\beta(n+kr)}
 \sum_{\substack{\ell_1,\ell_2\asymp L \\ (\ell_1,\ell_2)=1}}    \frac 1{H}  \sum_{1\leq |h|\leq H}
c_{h,\ell_1,\ell_2}  e_{rg\ell_1\ell_2}(m_k(n)h)  .
\end{equation}
We now Cauchy on the outer sum, which allows us to peel off the $\beta$'s in the term
\[
 \sum_{n}\  |\beta(n)  \beta(n+kr)|^2\leq   \sum_{n}\  |\beta(n) |^4=\| \beta\|_4^4,
\]
times the more interesting term
\[
 \sum_{n} \left|
 \sum_{\substack{\ell_1,\ell_2\asymp L \\ (\ell_1,\ell_2)=1}}  \frac 1{H}  \sum_{1\leq |h|\leq H}
c_{h,\ell_1,\ell_2}  e_{rg\ell_1\ell_2}(m_k(n)h)  \right|^2 .
\]
We simply expand this sum, and  take absolute values for each fixed $h,j,\ell_1,\ell_2, m_1, m_2$, to obtain
\[
\leq  \frac 1{H^2}   \sum_{1\leq |h|, |j| \leq H_i}  \sum_{\substack{\ell_1,\ell_2, m_1, m_2\asymp L \\ (\ell_1,\ell_2)=(m_1,m_2)=1}}
\left| \sum_{\substack{n\asymp N \\ (b_2-b_1)n\equiv b_1kr \pmod g}}    e_{rg\ell_1\ell_2}(m_k(n)h)
e_{rgm_1m_2}(-m_k(n)j)   \right| .
\]
Finally we have pure  exponential sums, albeit horribly complicated, having bounded the contributions of the unspecified sequences $\alpha$ and $\beta$ to our original sums by surprisingly simple functions.

Our approach here works for a rather more general class of (pairs of) sequences $\alpha$ and $\beta$. However, in order to appropriately bound $\| \beta\|_4^4$ in terms of $\| \beta\|_2^2$ for the theorem formulated here, it perhaps simplest to use the additional restriction on the size of each $\beta(n)$ that we gave here.

\subsection{Exponential sums with complicated moduli}  If $(r,s)=1$ then there are integers $a,b$ for which \[ ar+bs=1. \] Note that although there are infinitely many possibilities for the pair of integers $a,b$, the values of  $a \pmod s$  and $b\pmod r$ are uniquely defined. If we divide the previous equation by $rs$, and multiply by $m$, and then take $e(.)$ of both sides, we obtain
\[
e_{rs}(m) = e_s(am)\cdot e_r(bm) .
\]
This allows us to write the exponential, in our last sum, explicitly. After some analysis, we find that  the exponential sums at the end of the last subsection each take the form
\begin{equation} \label{ExpSum}
\sum_{\substack{n\asymp N \\ n \equiv a \pmod q}}    e_{d_1}\left( \frac{C_1}n \right)
e_{d_2}\left( \frac{C_2}{n+kr} \right) ,
\end{equation}
for some constants $C_1, C_2$ (where $d_1=rg[\ell_1,\ell_2],\ d_2= [m_1,m_2] $ and $q$ divides $g$). These constants depend on many variables but are independent of $n$. With a change of variable $n\mapsto qn+a$ we transform this to another sum of the same shape but now summing over all integers $n$ in a given interval.

\subsection{Exponential sums: From the incomplete to the complete} We now have  the sum of the exponential of a function of $n$, over the integers  in an interval. There are typically many integers in this sum, so this is unlike what we encountered earlier (when we were summing $1$).  The terms of the sum are periodic of period dividing $ [d_1,d_2]$ and it is not difficult to sum the terms over a complete period. Hence we can restrict our attention to ``incomplete sums'' where the sum does not include a complete period.

  We can now employ \eqref{Plancherel} once more. The coefficients $ \hat I(h) $ are well understood, but the $\hat f(h)$ now take the form
\[
\sum_{n \pmod q} e_{d_1}\left( \frac{C_1}{n} +hn \right)
e_{d_2}\left( \frac{C_2}{n+\Delta} +hn \right) ,
\]
a ``complete'' exponential sum.

The trick here is that we can factor the exponential into its prime factor exponentials and then, by the Chinese Remainder Theorem, this sum \emph{equals} the product over the primes $p$ dividing $q$,  of the same sum but now over $n\pmod p$ with the appropriate $e_p(\ast)$. Hence we have reduced this question to asking for good bounds on exponential sums of the form
\[
\sum_{n \pmod p} e_{p}\left( \frac{a}{n} + \frac{b}{n+\Delta} + cn \right) .
\]
Here we omit values of $n$ for which a denominator is $0$.
As long as this does not degenerate (for example, it would degenerate if $p|a,b,c$) then Weil's Theorem implies that this is $\leq \kappa p^{1/2}$, for some constant $\kappa>0$.  Therefore the complete sum over $n \pmod q$ is $\leq \kappa^{\nu(q)} q^{1/2}$. This in turn allows us to bound our incomplete sum
\eqref{ExpSum}, and to bound the term at the end of the previous section.

The calculations to put this into practice are onerous, and we shall omit these details here. At the end one finds that the bounds deduced are acceptably small if
\[
x^{1/2}\geq N>x^{(2+\epsilon)/5}
\]
where $\epsilon>12\eta+7\delta$. However this is not quite good enough, since we need to be able to take $N$ as small as $x^{1/3}$.

We can try a modification of this proof, the most successful being where, before we Cauchy equation \eqref{PreCauchy} we also fix the $\ell_1$ variable. This variant allows us to extend our range to all
\[
N>x^{\frac 13 +\epsilon }
\]
where $\epsilon>\frac{14}3 \eta + \frac 72 \delta$.  We are very close to the exponent $\frac 13$, but it seems that we are destined to just fail.

\section{Complete exponential sums: Combining information the Graham-Ringrose way}

The ``square-root cancellation'' for incomplete exponential sums of the form $ |\sum_n e_q(f(n))| $
for various moduli $q$, with the sum over $n$ in an interval of length $N<q$, is not quite good enough to obtain our results.

Graham and Ringrose \cite{graham}  proved that we can improve the (analogous) incomplete character sum bounds  when $q$ is smooth. Here we follow Polymath8 \cite{polymath8},\footnote{Who, in turn, essentially rediscovered an earlier argument of Heath-Brown (see section 9 of \cite{HBold}).} who showed how to modify the Graham-Ringrose argument to   incomplete exponential sums. This will allow us to reduce the size of $N$ in the above argument and prove our result.

\subsection{Formulating the improved incomplete exponential sum result}
For convenience we will write the entry of the exponential sum as $f(n)$, which should be thought of as taking the form $a/n+b/(n+\Delta)+cn$, though the argument is rather more general.  We assume that $N<q$, so that the Weil bound gives
\begin{equation} \label{weil1}
\left|\sum_n   e_q(f(n))\right| \ll   \tau(q)^A q^{1/2}   .
\end{equation}
for some constant $A$ which depends only on the degree of $f$.

In what follows we will assume that $q$ is factored as $q = q_1 q_2$, and we will deduce that
\begin{equation}\label{vdc-1}
 \left|  \sum_n e_q(f(n))\right| \ll
\left( q_1^{1/2}+      q_2^{1/4}  \right)    \tau(q)^A (\log q)  N^{1/2} .
\end{equation}
If $q$ is $y$-smooth then we let $q_1$ be the largest divisor of $q$ that  is $\leq (qy)^{1/3}$ so that it must be $>(q/y^2)^{1/3}$, and so $q_2\leq (qy)^{2/3}$. Hence the last bound implies
\[
 \left|  \sum_n  e_q(f(n))\right| \ll    \tau(q)^A   (qy)^{1/6} (\log q)  N^{1/2} .
\]
It is this bound that we insert into the  machinery of the previous section, and it allows use to extend our range to all
\[
N>x^{\frac 3{10}+\epsilon }
\]
where $\epsilon$ is bounded below by a (positive) linear combination of $\eta$ and $\delta$. In order that we can stretch the range down to \emph{all} $N>x^{\frac 13}$, this method requires that
\[
162 \eta +90 \delta <1.
\]

\subsection{Proof of \eqref{vdc-1}}   We may assume
\[  q_1 \leq N \leq q_2 \]
else  if $N<q_1$ we have the trivial bound $\leq N<(q_1N)^{1/2}$, and if $N>q_2$ then \eqref{weil1} implies the result since
$q^{1/2}=(q_1q_2)^{1/2}<(q_1N)^{1/2}$.

The main idea will be to reduce our incomplete exponential sum mod $q$, to a sum of incomplete exponential sums mod $q_2$. Now \[
e_q(f(n+kq_1)) = e_{q_1}( f(n)/q_2)\ e_{q_2}(  f(n+kq_1)/q_1 )
\]
so that, by a simple change of variable, we have
\[
\sum_n   e_q(f(n)) = \sum_n  e_q(f(n+kq_1)))  = \sum_n \  e_{q_1}( f(n)/q_2)\ e_{q_2}(  f(n+kq_1)/q_1 ).
\]
Now, if we sum this over all $k, 1\leq k\leq K:= \lfloor N/q_1\rfloor$, then we have
\[
K \sum_n   e_q(f(n)) = \sum_n e_{q_1}( f(n)/q_2)\ \sum_{k=1}^K    e_{q_2}(  f(n+kq_1)/q_1 ) ,
\]
and so
\begin{align*}
 \left| K \sum_n  e_q(f(n))\right|^2 &\leq \left( \sum_n \left|\sum_{k=1}^K  e_{q_2}(  f(n+kq_1)/q_1 )\right| \right)^2 \\
&\ll N \sum_n \left|\sum_{k=1}^K   e_{q_2}(  f(n+kq_1)/q_1 )\right|^2  \\
&  =N  \sum_{1 \leq k,k' \leq K} \sum_n   e_{q_2}( g_{k,k'}(n)) ,
\end{align*}
where $g_{k,k'}(n):=  (f(n+kq_1) - f(n+k'q_1) )/q_1 \pmod{q_2}$ if $n+kq_1,\ n+k'q_1\in I$, and $g_{k,k'}(n):=0$ otherwise.
If $k=k'$ then $g_{k,k}(n)=0$, and so these terms contribute $\leq KN^2$.

We now apply the bound of \eqref{weil1} taking $f=g_{k,k}$ for $k\ne k'$. Calculating the sum yields \eqref{vdc-1}.

\subsection{Better results} In \cite{polymath8} the authors obtain better results using somewhat deeper techniques.

By replacing the set of $y$-smooth integers by the much larger class of integers with  divisors in a pre-specified interval (and  such that those divisors have divisors in a different pre-specified interval, etc., since one can iterate the proof in the previous section)  they improve the restriction to
\[ 84 \eta + 48 \delta  < 1 .\]
Following Zhang they also gained bounds on certain higher order convolutions (of the shape $\alpha*1*1*1$), though here needing  deeper exponential sum estimates, and were then able to improve the restriction to (slightly better than)
\[ 43 \eta + 27 \delta  < 1 .\]

\subsection{Final remark} It is worth noting that one can obtain the same
quality of results only assuming a bound $\ll p^{2/3-\epsilon}$ for the relevant
exponential sums in finite fields.

\section{Further Applications} \label{FurtherApples}

Since Maynard's preprint appeared on the arxiv, there has been a flowering of diverse applications of the techniques, to all sorts of questions from classical analytic number theory. My favorite (in that it is such a surprising application) was given by Pollack \cite{pollack}, who connected these ideas to another famous problem:

$\bullet$\ In 1927 Artin conjectured that any integer $g$, that is not a perfect square and not $-1$, is a primitive root for infinitely many distinct primes. Following beautiful work of Gupta and Murty \cite{GuptaMurty}, Heath-Brown \cite{HB} showed that this must be true for all but, at most two, primes $g$. Hooley \cite{hooley} had shown how to prove Artin's conjecture assuming the Generalized Riemann Hypothesis. Pollack \cite{pollack} has now shown, assuming the Generalized Riemann Hypothesis, that any such integer $g$ is a primitive root for each of  infinitely $m$-tuples of primes which differ by no more than $B_m$.

Another beautiful result giving prime patterns:

$\bullet$\    Combining  ideas from this article with those from Green and Tao \cite{GT}, Pintz showed \cite{pintz-polignac} that there exists an integer $B>0$ such that are infinitely many arithmetic progressions of primes
 $p_n,\ldots, p_{n+k}$ such that each of $p_n+B,\ldots, p_{n+k}+B$ is also prime.

\subsection{Prime ideals} There are several analogous results in number fields.

$\bullet$\  Thorner \cite{thorner} showed that for any given binary quadratic form $f$ of discriminant $D<0$,
there are infinitely many pairs of distinct primes $p$ and $q=p+B(D)$ which are values of  $f$.

Let $A$ be  the ring of integers of a given number field $K$.

$\bullet$\ Thorner \cite{thorner}   showed that if $K/\mathbb Q$ is Galois then there are infinitely many pairs of prime ideals of $A$ whose norms are distinct primes that differ by $B(K)$.

$\bullet$\  Castillo, Hall, Lemke Oliver, Pollack and Thompson \cite{castillo} proved  that, for any admissible $k$-tuple $h_1,\ldots,h_k$ in $A$, there are infinitely many $\alpha\in A$ such that at least $m$ of the ideals $(\alpha+h_1),\ldots, (\alpha+h_k)$ are prime. Here we need $k$ to be sufficiently large as a function of $K$ and $m$.

\subsection{Applications to (irreducible) polynomials} Castillo, Hall, Lemke Oliver, Pollack and Thompson \cite{castillo} also proved  that, for any admissible $k$-tuple $h_1,\ldots,h_k$ of polynomials in $\mathbb F_q[t]$, there are infinitely many $f\in \mathbb F_q[t]$ such that at least $m$ of the polynomials $f+h_1,\ldots, f+h_k$ are irreducible in $\mathbb F_q[t]$. Here we need $k$ to be sufficiently large as a function of  $m$ (but not $q$); in fact, the same bound as was required in Maynard's result that was proved earlier.

\subsection{Quadratic twists of elliptic curves and coefficients of modular forms}
Thorner \cite{thorner} gave several applications to elliptic curves and modular forms:

$\bullet$\  For any given newform $f(z)=\sum_{n\geq 1} a_f(n)q^n$ for $\Gamma_0(N)$ of even weight $k\geq 2$, and any prime $\ell$ there exist infinitely many pairs of distinct primes $p$ and $q=p+B(f,\ell)$ for which
$a_f(p)\equiv a_f(q) \pmod \ell$.

$\bullet$\ There exist infinitely many pairs of distinct primes $p$ and $q=p+B_0$ such that the elliptic curves $py^2=x^3-x$ and $qy^2=x^3-x$ each have only finitely many rational points.

$\bullet$\  There exist infinitely many pairs of distinct primes $p$ and $q=p+B_1$ such that the elliptic curves $py^2=x^3-x$ and $qy^2=x^3-x$ each have infinitely many rational points.

These last two results can be generalized to other elliptic curves for which certain (technical) properties hold.

\subsection{Quantitative forms of the Maynard-Tao Theorem} These can give a lower bound for the number of $n\in [x,2x]$ for which at least $m$ of $n+a_1,\ldots,n+a_k$ are prime, and also can allow the $a_j$ to vary with $x$, getting as large as a multiple of $\log x$. Maynard \cite{clusters} generalized his result and proof, discussed earlier in this article, as follows:

$\bullet$\  Suppose that
we are given a finite admissible set of linear forms $\{ b_jx+a_j\}$, a sequence of  integers $\mathcal N$ which is ``well-distributed'' in arithmetic progressions on average, and a set of primes $\mathcal P$ such that each $\{ b_jn+a_j: n\in \mathcal N\} \cap \mathcal P$ is also ``well-distributed'' in arithmetic progressions on average. Then there are infinitely many integers $n\in \mathcal N$ such that at least $\ell$ of the $\{ b_jn+a_j: \ 1\leq j\leq k\}$ are primes in $\mathcal P$, where $\ell\gg \log k$. (This result can be used to deduce several of the others  listed here.)

$\bullet$\  One amazing consequence of  this result is that, for any $x,y\geq 1$, there are $\gg x\exp(-\sqrt{\log x})$ integers $n\in (x,2x]$ for which:

\centerline{\textsl{There are $\gg \log y$ primes in the interval $(n,n+y]$.}}

$\bullet$\   Let $d_n=p_{n+1}-p_n$ where $p_n$ is the $n$th smallest prime. Pintz  \cite{pintz-ratio} showed that
there are infinitely many $n$ for which $d_n,d_{n+k}$ are significantly larger than $\log n$ whereas each of
$d_{n+1},\ldots,d_{n+k-1}$ are bounded.

\subsection{The set of limit points, $\mathcal L$ , of the $\frac{p_{n+1}-p_n}{\log p_n}$ }
 It is conjectured that $\mathcal L=[0,\infty]$.  The result of GPY gave that $0\in L$, and it has long been known that $\infty\in L$ (see appendix \ref{largegaps}).

$\bullet$\  The quantitative form of the Maynard-Tao theorem immediately gives that if $B$ is a set of any 50 positive real numbers, then $(B-B)^+$ contains an element of $L$. From this one can deduce \cite{banks} that the measure of $L\cap[0,x]$ is at least $x/49$.

$\bullet$\  Pintz \cite{pintz-polignac} showed that $[0,c]\subset L$ for some $c>0$ (though the proof does not yield a value for $c$).

\appendix

\section{Hardy and Littlewood's heuristic for the twin prime conjecture} \label{HLheuristic}
The rather elegant and natural heuristic for the quantitative twin prime conjecture, which we described in section \ref{Primektuples}, was not the original way in which Hardy and Littlewood made this extraordinary prediction. The genesis of their technique lies in the \emph{circle method}, that they developed together with Ramanujan.  The idea is that one can distinguish the integer $0$ from all other integers, since
\begin{equation} \label{expintegral}
\int_0^1 e(nt) dt = \begin{cases} 1 &\text{ if } n=0;\\
0&\text{ otherwise, } \end{cases}
\end{equation}
where, for any real number $t$, we write $e(t) := e^{2\pi i t}$. Notice that this is literally an integral around the unit circle. Therefore to determine whether the two given primes $p$ and $q$ differ by $2$, we simply determine
\[
\int_0^1 e((p-q-2)t) \ dt.
\]
If we sum this up over all $p,q\leq x$, we find that the number of twin primes $p,p+2\leq x$ equals, exactly,
\[
\sum_{\substack{p,q \leq x \\ p,q \text{  primes}}} \int_0^1 e((p-q-2)t) \ dt =
 \int_0^1 |P(t)|^2 e(-2t) \  dt, \ \text{   where   } \ P(t):=   \sum_{\substack{p \leq x \\ p \text{  prime}}}  e(pt).
\]
In the circle method one next distinguishes  between those parts of the integral which are large (the \emph{major arcs}), and those that are small (the \emph{minor arcs}). Typically the major arcs are small arcs around those $t$ that are rationals with small denominators. Here the width of the arc is about $1/x$, and we wish to understand the contribution at $t=a/m$, where $(a,m)=1$.  We then have
\[
P(a/m) = \sum_{\substack{b  \pmod m \\ (b,m)=1}}  e_m(ab) \pi(x;m,b) .
\]
where $e_m(b)=e(\frac{b}{m}) = e^{2\pi i b/m}$. We note the easily proved identity
\[
\sum_{r  \pmod m,\ (r,m)=1} e_m(rk)= \phi((k,m)) \mu(m/(m,k)).
\]
Assuming the prime number theorem for arithmetic progressions with a  good error term we therefore see that
\[
P(a/m) \approx \frac x{\phi(m)\log x}  \sum_{\substack{b  \pmod m \\ (b,m)=1}}  e_m(ab) = \frac {\mu(m)}{\phi(m)}  \frac x{\log x} .
\]
Hence in total we predict that the number of prime pairs $p, p+2\leq x$ is roughly
\begin{align*}
&\approx \frac 1x \sum_{m\leq M} \sum_{a:\ (a,m)=1}  e_m(-2a) \left|  \frac {\mu(m)}{\phi(m)}  \frac x{\log x}  \right|^2 \\ &\approx
\frac x{(\log x)^2} \sum_{m\geq 1}  \frac {\mu(m)^2}{\phi(m)^2} \cdot \phi((2,m)) \mu(m/(2,m)) \\
&= \frac x{(\log x)^2} \left( 1 + \frac 1{\phi(2)} \right) \prod_{p>2}  \left( 1 - \frac 1{\phi(p)^2} \right)  =  C \frac x{(\log x)^2} ;
\end{align*}
the same prediction as we obtained in section \ref{Primektuples} by a very different heuristic. Moreover an analogous argument yields the more general  conjecture for prime pairs $p,p+h$.

Why doesn't  this argument lead to a proof of the twin prime conjecture? For the moment we have little idea how to show that the minor arcs contribute very little.  We know that the minor arcs can be quite large in absolute value, so to prove the twin prime conjecture we would have to find cancelation in the arguments of the integrand on the minor arcs.   Indeed it is an important open problem to find cancelation in the minor arcs in \emph{any} problem.

However, if we add more variables then appropriate modifications of this argument can be made to work.
Indeed it is this kind of circle method argument that   led to Helfgott's  recent proof \cite{HH} that every odd integer $\geq 3$ is the sum of no more than three primes.

\section{Stop the press! Large gaps between primes} \label{largegaps}
The average gap between primes $\leq x$ is about $\log x$. This article has focused on recent work to prove that there are many much smaller gaps. How about larger gaps? Can one prove that there are infinitely many gaps between consecutive primes that are much larger than $\log x$? In 1931, Westzynthius showed that for any constant $C>0$ there exist infinitely $n$ for which $p_{n+1}-p_n>C\log p_n$.
His idea is simply to find many consecutive integers each of which has a very small prime factor (so none of these integers can be a prime).  Erd\H os and Rankin developed this method improving the result to: There exists a  constant $C>0$ such that there are  infinitely $n$ for which
\[
p_{n+1}-p_n> C\log p_n \frac{\log\log p_n}{(\log\log\log p_n)^2} \log\log\log\log p_n.
\]
Subsequent papers increased the constant $C$, though were unable to show that one could take arbitrarily large $C$ (and Cramer conjectured that gaps can be much larger, even as large as $(\log p_n)^2$).  Erd\H os liked to stimulate research on his favourite questions by offering cash prizes. The largest prize that he ever offered was \$ 10,000, to whoever could show that one can take $C$, here,  to be arbitrarily large.

The GPY method was developed to prove that there are (far) smaller gaps between primes than the average.
It came as quite a surprise when, in August 2014, James Maynard \cite{mayn2} showed that one could ingeniously modify the argument for small prime gaps, to improve the Erd\H os-Rankin theorem for large prime gaps, not only getting arbitrarily large $C$ but replacing $C$ by something like $\log\log\log p_n$.  \textsl{The same week}, Ford, Green, Konyagin and Tao \cite{FGKT} modified the  Erd\H os-Rankin argument \textsl{very differently}, the main ingredient being a version of Green and Tao's \cite{GT} famous theorem on $k$-term arithmetic progressions of primes, to also show that one could take $C$ to be arbitrarily large.

It is an exciting time for gaps between primes.


\begin{thebibliography}{10}


\bibitem{banks}
W. D. Banks, T. Freiberg, J. Maynard, \emph{On limit points of the sequence of normalized prime gaps}, preprint.

\bibitem{consecutive}
W. D. Banks, T. Freiberg, C. L. Turnage-Butterbaugh, \emph{Consecutive primes in tuples}, preprint.




\bibitem{bombieri}
E. Bombieri, \emph{On the large sieve}, Mathematika \textbf{12} (1965), 201--225.

\bibitem{bomdav}
E. Bombieri  and  H. Davenport \emph{Small difference between prime numbers}, Proc. Roy. Soc. Ser. A \textbf{293} (1966), 1-18.


\bibitem{bfi}
E. Bombieri, J. Friedlander  and  H.  Iwaniec\emph{Primes in arithmetic progressions to large moduli}, Acta Math. \textbf{156} (1986), no. 3-4, 203--251.

\bibitem{bfi-2}
E. Bombieri, J. Friedlander  and H. Iwaniec, \emph{Primes in arithmetic progressions to large moduli. II}, Math. Ann. \textbf{277} (1987), no. 3, 361--393.

\bibitem{bfi-3}
E. Bombieri, J. Friedlander and  H. Iwaniec, \emph{Primes in arithmetic progressions to large moduli. III}, J. Amer. Math. Soc. \textbf{2} (1989), no. 2, 215--224.


\bibitem{castillo}
A. Castillo, C. Hall, R. J. Lemke Oliver, P. Pollack, L. Thompson, \emph{Bounded gaps between primes in number fields and function fields}, preprint.

\bibitem{chua}
L. Chua, S. Park, G. D. Smith, \emph{Bounded gaps between primes in special sequences}, preprint.


\bibitem{WeilII}
P. Deligne, \emph{La conjecture de Weil. II}, Publications Math\'ematiques de l'IH\'ES \textbf{52} (1980), 137--252.

\bibitem{DI83}
J.-M. Deshouillers and H. Iwaniec, \emph{Kloosterman Sums and Fourier Coefficients of Cusp Forms}, Inventiones mathematicae \textbf{70}  (1982/83), 219-219.


\bibitem{elliott}
P. D. T. A. Elliott  and  H. Halberstam, \emph{A conjecture in prime number theory}, Symp. Math. \textbf{4} (1968), 59--72.

\bibitem{FGKT}
K. Ford, B. Green, S. Konyagin and T. Tao, \emph{Large gaps between consecutive prime numbers}, preprint.


\bibitem{fouvry-0}
E. Fouvry, \emph{A new form of the error term in the linear sieve}, Acta Arith., \textbf{37} (1980), 307--320.

\bibitem{fouvry}
E. Fouvry, \emph{Autour du th\'eor\`eme de Bombieri-Vinogradov}, Acta Math. \textbf{152} (1984), no. 3-4, 219--244.

\bibitem{fi}
E. Fouvry and  H. Iwaniec, \emph{On a theorem of Bombieri-Vinogradov type}, Mathematika \textbf{27} (1980), no. 2, 135--152 (1981).

\bibitem{fi-2}
E. Fouvry and  H. Iwaniec, \emph{Primes in arithmetic progressions}, Acta Arith. \textbf{42} (1983), no. 2, 197--218.

\bibitem{freiberg}
T. Freiberg, \emph{A note on the theorem of Maynard and Tao}, preprint.


 \bibitem{fg-1}
J. Friedlander and  A. Granville, \emph{Limitations to the equi-distribution of primes. I}, Ann. of Math. \textbf{129} (1989), 363-382.



\bibitem{fi-3}
J. Friedlander and  H. Iwaniec, \emph{Incomplete Kloosterman sums and a divisor problem}, With an appendix by Bryan J. Birch and Enrico Bombieri. Ann. of Math. (2) \textbf{121} (1985), no. 2, 319--350.

\bibitem{fi-4}
J. Friedlander and  H. Iwaniec, \emph{Close encounters among the primes}, preprint.

\bibitem{gall} 
P.X. Gallagher, \emph{Bombieri's mean value theorem}, Mathematika \textbf{15} (1968), 1-6.


\bibitem{gpy}
D. Goldston, J. Pintz and  C. Y{\i}ld{\i}r{\i}m, \emph{Primes in tuples. I}, Ann. of Math. \textbf{170} (2009), no. 2, 819--862.

\bibitem{ggpy}
D. Goldston, S. Graham, J. Pintz and  C. Y{\i}ld{\i}r{\i}m, \emph{Small gaps between primes or almost primes}, Trans. Amer. Math. Soc. \textbf{361} (2009), no. 10, 5285--5330.

 \bibitem{golomb}
S.W. Golomb, \emph{Problems in the distribution of prime numbers}, Ph.D.~thesis, Harvard (1957).



\bibitem{graham}
S. W. Graham and  C. J. Ringrose, \emph{Lower bounds for least quadratic nonresidues}, Analytic number theory (Allerton Park, IL, 1989), 269--309, Progr. Math., 85, Birkh\"auser Boston, Boston, MA, 1990.

\bibitem{GKKO}
A. Granville, D.M. Kane, D. Koukoulopoulos and R. Lemke Oliver, \emph{Best possible densities, as a consequence of Zhang-Maynard-Tao}, to appear.

\bibitem{GS}
A. Granville and  K. Soundararajan, \emph{Multiplicative number theory; the pretentious approach}, to appear.

\bibitem{GT}
B.J. Green and  T. Tao, \emph{The primes contain arbitrarily long arithmetic progressions},  Annals of Mathematics \textbf{167} (2008),  481-547.


\bibitem{GTZ}
B.J. Green, T. Tao and  T. Ziegler\emph{An inverse theorem for the Gowers $U^{s+1}[N]$-norm},  Annals of Mathematics \textbf{176} (2012),  1231-1372.

\bibitem{GuptaMurty}
R. Gupta and R. Murty, \emph{A remark on Artin's conjecture}, Invent. Math. \textbf{78}  (1984), 127?130


 \bibitem{hardy}
G. H. Hardy and  J. E. Littlewood, \emph{Some problems of ``Partitio Numerorum'', III: On the expression of a number as a sum of primes}, Acta Math. \textbf{44} (1923), 1--70.


\bibitem{hb-ident}
D. R. Heath-Brown, \emph{Prime numbers in short intervals and a generalized Vaughan identity},  Canad. J. Math. 34 (1982), no. 6, 1365--1377.

\bibitem{HB}
D.R. Heath-Brown,  \emph{Artin's conjecture for primitive roots}, Quart. J. Math. Oxford Ser. \textbf{37} (1986), 27-38.



\bibitem{HBold}
D.R. Heath-Brown,  \emph{The largest prime factor of $X^3+2$},  Proc. London Math. Soc. \textbf{82} (2001),  554?596.



\bibitem{HH}
H. A. Helfgott, \emph{Major arcs for Goldbach's theorem},  to appear.

\bibitem{hensley}
D. Hensley and   I. Richards, \emph{On the incompatibility of two conjectures concerning primes}, Analytic number theory (Proc. Sympos. Pure Math., Vol. XXIV, St. Louis Univ., St. Louis, Mo., 1972), pp. 123--127. Amer. Math. Soc., Providence, R.I., 1973.

\bibitem{hensley-2}
D. Hensley and   I. Richards, \emph{Primes in intervals}, Acta Arith. \textbf{25} (1973/74), 375--391.

\bibitem{hooley}
C. Hooley, \emph{On Artin's conjecture} J. Reine Angew. Math. \textbf{225} (1967), 209?220.


\bibitem{IK}
H. Iwaniec and E. Kowalski, \emph{Analytic number theory}, AMS Colloquium Publications, \textbf{53}  (2004).



\bibitem{Kloosterman}
H. D. Kloosterman, \emph{On the representation of numbers in the form $ax^2 + by^2 + cz^2 + dt^2$}, Acta Mathematica \textbf{49} (1926), pp. 407--464.

\bibitem{Kowalski}
 E. Kowalski, \emph{Gaps between prime numbers and primes in arithmetic progressions (after Y. Zhang and J. Maynard)}, S\'eminaire Bourbaki  \textbf{66} (2013-2014), no. 1084.

\bibitem{li-pan}
H. Li, H. Pan, \emph{Bounded gaps between primes of the special form}, preprint.

\bibitem{maier}
H. Maier, \emph{Small differences between prime numbers}, Michigan Math. J. \textbf{35} (1988), 323"1¤74.

\bibitem{maynard}
J. Maynard, \emph{Small gaps between primes}, to appear, Annals Math.

\bibitem{mayn2}
J. Maynard, \emph{Large gaps between primes}, preprint.

\bibitem{clusters}
J. Maynard, \emph{Dense clusters of primes in subsets}, preprint.

\bibitem{mordell}
L. J. Mordell, \emph{On a sum analogous to a Gauss's sum}, Quart. J. Math. Oxford Ser. \textbf{3} (1932), 161--167.

 \bibitem{mot}
Y. Motohashi, \emph{An induction principle for the generalization of Bombieri's Prime Number Theorem}, Proc. Japan.. Acad. \textbf{52} (1976) 273-275.

 \bibitem{mp}
Y. Motohashi and   J. Pintz, \emph{A smoothed GPY sieve}, Bull. Lond. Math. Soc. \textbf{40} (2008), no. 2, 298--310.

 \bibitem{pintz-polignac}
J. Pintz, \emph{Polignac Numbers, Conjectures of Erd\H{o}s on Gaps between Primes, Arithmetic Progressions in Primes, and the Bounded Gap Conjecture}, preprint.

\bibitem{pintz-ratio}
J. Pintz, \emph{On the ratio of consecutive gaps between primes}, preprint.

\bibitem{pintz-new}
J. Pintz, \emph{On the distribution of gaps between consecutive primes}, preprint.

\bibitem{deP}
A. De Polignac, \emph{Six propositions arithmologiques d\'eduites du crible d'\'Eratosoth\`ene},
Nouvelles annales de math\'ematiques \textbf{8} (1849), 423--429.


\bibitem{pollack}
P. Pollack, \emph{Bounded gaps between primes with a given primitive root}, preprint.

\bibitem{lola}
P. Pollack, L. Thompson, \emph{Arithmetic functions at consecutive shifted primes}, preprint.

\bibitem{polymath8}
D.H.J. Polymath, \emph{New equidistribution estimates of Zhang type}, preprint.

\bibitem{polymath8b}
D.H.J. Polymath, \emph{Variants of the Selberg sieve, and bounded intervals containing many primes}, preprint.




\bibitem{schinzel}
A. Schinzel, \emph{Remarks on the paper ``Sur certaines hypoth\'eses concernant les nombres premiers''}, Acta Arith. \textbf{7} (1961/1962) 1--8.

 \bibitem{selberg}
A. Selberg, \emph{On elementary methods in prime number-theory and their limitations},
in Proc. 11th Scand. Math. Cong. Trondheim (1949), Collected
Works, Vol. I, 388"1¤77, Springer-Verlag, Berlin-G\"ottingen-Heidelberg, 1989.

\bibitem{shiu}
P. Shiu, \emph{A Brun-Titchmarsh theorem for multiplicative functions}, J. Reine Angew.
Math. 313(1980), 161--170.


\bibitem{sound}
K. Soundararajan, \emph{Small gaps between prime numbers: the work of Goldston-Pintz-Yildirim}, Bull. Amer. Math. Soc. (N.S.) \textbf{44} (2007), no. 1, 1--18.

\bibitem{tao}
T. Tao, \emph{private communication}.

\bibitem{thorner}
J. Thorner, \emph{Bounded Gaps Between Primes in Chebotarev Sets}, preprint.

\bibitem{vaughan}
R. C. Vaughan, \emph{Sommes trigonom\'etriques sur les nombres premiers}, C. R. Acad. Sci. Paris S\'er. A \textbf{285} (1977), 981--983.

\bibitem{vinogradov}
A. I. Vinogradov, \emph{The density hypothesis for Dirichlet L-series}, Izv. Akad. Nauk SSSR Ser. Mat. \textbf{29} (1965), 903--934.

\bibitem{weil}
A. Weil, \emph{Numbers of solutions of equations in finite fields}, Bulletin of the American Mathematical Society \textbf{55} (1949), 497--508.

\bibitem{zhang}
Y. Zhang, \emph{Bounded gaps between primes}, to appear, Annals of Mathematics.

\end{thebibliography}
\end{document}